\documentclass[11pt]{amsart}
\usepackage{amssymb}
\usepackage{amsmath}
\usepackage{amscd}
\usepackage{curves}
\usepackage{epsfig}
\usepackage{bbm}
\usepackage{geometry}
\usepackage{graphicx}
\usepackage{graphics}
\usepackage{pstricks}
\usepackage{pstricks-add}
\usepackage{pst-grad}
\usepackage{pst-plot}
\usepackage{verbatim}
\usepackage{latexsym}
\usepackage{eucal}
\usepackage{amsfonts}
\usepackage{enumerate}
\usepackage[final]{pdfpages}
\usepackage{epsfig}
\usepackage{epstopdf}
\usepackage{rotating}
\usepackage{cite}
\usepackage{color}
\usepackage{fancybox}
\usepackage{pstricks}
\usepackage{pst-plot}
\usepackage{pstricks-add}
\usepackage{epsf}

\bibliographystyle{plain}


\setlength{\oddsidemargin}{.2in}
\setlength{\evensidemargin}{.2in}
\setlength{\textwidth}{6.45in}
\setlength{\topmargin}{.25in}
\setlength{\textheight}{608pt}
 
\numberwithin{equation}{section}
\newtheorem{theorem}{Theorem}[section]
\newtheorem{lemma}[theorem]{Lemma}
\newtheorem{prop}[theorem]{Proposition}

\newtheorem{definition}[theorem]{Definition}

\def \bpf {\begin{proof}}
\def \epf {\end{proof}}
\def \beq {\begin{equation*}}
\def \eeq {\end{equation*}}
\def \bsp{\begin{split}}
\def \esp{\end{split}}


\def \geqs { \gtrsim}

\def \CI {{C^\infty}}
\def \Coi {{C_0^\infty}}

\def \wt {\widetilde}

\def \mcb {{\mathcal B}}

\def \mce {{\mathcal E}}
\def \mcf {{\mathcal F}}

\def \mch {{\mathcal H}}

\def \mcl {{\mathcal L}}
\def \mcm {{\mathcal M}}
\def \mcw {{\mathcal W}}

\def \mcp {{\mathcal P}}
\def \mcq {{\mathcal Q}}
\def \mcr {{\mathcal R}}

\def \mcu {{\mathcal U}}
\def \mcv {{\mathcal V}}

\def \zed {{\mathcal Z}}
\def \mcy {{\mathcal Y}}

\def \mr {{\mathbb R}}
\def \mn {{{\mathbb N}_0}}

\def\ha {\frac{1}{2}}

\def\tha {\frac{3}{2}}
\def\fha {\frac{5}{2}}
\def\sha {\frac{7}{2}}

\def \msi {{m-\frac{n}{4}+\frac{1}{2}}}

\def \wtW {\widetilde{W}}

\def \ka {\kappa}



\def \Id {\operatorname{Id}}

\def \loc {\operatorname{loc}}

\def \diag{\operatorname{Diag}}

\def \mrn {{\mathbb R}^n}

\def \idm {{\stackrel{o}{\operatorname{I}}}{}^{m-\frac{n}{4}+\ha}}

\def \ido {{\stackrel{o}{\operatorname{I}}}}


\def \ga {{\gamma}}
\def \eps {\varepsilon}   
\def \vphi {\varphi}  
\def \vkap {\varkappa}
   
\def \La {\Lambda}   
\def \lan {\langle}   
\def \ran {\rangle}   
\def \del {\delta}   

\def \p {\partial}

\def \geqs {\gtrsim}


\def \novt {\frac{n}{2}}
\def \novf {\frac{n}{4}}

\def \beqq {\begin{equation}}
\def \eeqq {\end{equation}}

\def \mck {\mathcal{K}}

\numberwithin{equation}{section}

\begin{document}
\title[Interactions of Semilinear Waves]{ Interactions of Semilinear Progressing Waves in Two or More Space Dimensions}
\author{ Ant\^onio S\'a Barreto }
\address{Ant\^onio S\'a Barreto\newline
\indent Department of Mathematics, Purdue University \newline
\indent 150 North University Street, West Lafayette Indiana,  47907, USA}
\email{sabarre@purdue.edu}
\keywords{Nonlinear wave equations, propagation of singularities, wave front sets. AMS mathematics subject classification: 35A18, 35A21, 35L70}
\begin{abstract}   We show that singularities form after the interaction of three  transversal semilinear conormal waves.  Our results hold for  space dimensions two and higher, and for arbitrary $\CI$ nonlinearity. The case of two space dimensions in which the nonlinearity  is a polynomial  was studied by the author and Yiran Wang.  
\end{abstract}

\maketitle
\tableofcontents
\section{Introduction}

When singularities of nonlinear waves interact, they will produce additional singularities in a way which in general is very hard to predict.  When the singularities are conormal, the behavior of the newly formed singularities is more tractable. We study this phenomenon when three transversal conormal waves interact  in dimensions greater than or equal to three, and we show that this will produce new singularities on the hypersurface emanating from the submanifold where the three waves interact, unless some degeneracy occurs.  We also show that the new singularities do contain information about the nonlinear term, raising the possibility this will find applications in the study of inverse problems.

We consider solutions to  $P(y,D) u=\mcy(y) f(y,u),$  $y \in \mr^n,$ $n\geq 3,$ where $P(y,D)$ is a second order strictly hyperbolic operator, $\mcy\in C_0^\infty,$  and $f\in C^\infty.$ We assume  that for negative times $\mcy=0$ and that the solution $u$  is  the superposition of three elliptic conormal waves intersecting transversally at a codimension three submanifold $\Gamma,$ which will intersect the support of the nonlinearity. Bony \cite{Bony3} showed that as long as the incoming waves do not have caustics, no new singularities are formed before the triple interaction; transversal interactions of two waves do not produce new singularities.
Melrose and Ritter \cite{MelRit}, and Bony \cite{Bony5,Bony6} have shown that after a triple interaction occurs on the support of the nonlinearity, the solution $u$ may have additional singularities on the characteristic hypersurface $\mcq$ emanating from $\Gamma,$ and $u$ will be conormal to $\mcq$ away from $\Gamma$ and the incoming hypersurfaces, see Fig.\ref{Fig1}.   The papers \cite{Bony5,Bony6,MelRit} are in fact about the three dimensional case, however, their methods apply in higher dimensions, as pointed out in \cite{Mel-EP}.  However, unlike the three dimensional case, the submanifold $\Gamma$ is not necessarily contained on a level curve of the time function, so the more appropriate version of the result of Melrose and Ritter and Bony to be applied in the  higher dimensional case is that if the solution $u$ to $Pu=f(y,u)$ is conormal (in a suitable sense) to $\Sigma_1\cup\Sigma_2\cup \Sigma_3 \cup \mcq,$ for $t<0,$ then $u$ is also conormal to 
$\Sigma_1\cup\Sigma_2\cup \Sigma_3 \cup \mcq$ for $t>0.$  This result is somewhat contained in \cite{Bony5,Bony6,MelRit}, but it is explicitly stated in \cite{SaB} including the case $Pu=f(y,u,Du).$

However,  these results  do not guarantee that singularities on   $\mcq$ will in fact exist.  Examples of formation of singularities after the triple interaction were given by Rauch and Reed \cite{RauRee} and Beals \cite{Beals4}.  S\'a Barreto and Wang \cite{SaWang1} proved a particular case of Theorem \ref{triple-0} below, when $n=3$ and the nonlinear term $f(y,u)$ is a polynomial in $u.$ The purpose of this paper is to extend the results of \cite{SaWang1} to arbitrary $\CI$ nonlinear terms $f(y,u)$ and higher dimensions.  We show that if  the initial data is elliptic of order $m,$ and if $(\p_u^3 f)(q,u(q))\not=0$ for some $q\in \Gamma,$  then  there is a neighborhood $U_q\subset \Gamma $ of $q$ such that $u$ will be elliptic roughly of order $3m$ along any null bicharacteristics on the conormal bundle to $\mcq$ which passes  $N^* U_q\setminus 0,$ as  long as they do not intersect the hypersurfaces and $\mcq$ is $\CI$ in a neighborhood of the ray.  In this paper we extend these results to general semilinear equations and dimensions greater than or equal to three.

Our results show that one can recover $(\p_u^3 f)(y, u(y)),$ for $y\in \Gamma,$ which can be viewed as an inverse result.   Kurylev, Lassas and Uhlmann \cite{KLU} were the first to use the propagation of singularities for semilinear equations to study inverse problems.  Several other papers have followed, see for example \cite{LUW,UW,Paternain} and references cited there. 

\section{The Framework of the Problem and Examples}

Let $\Omega\subset \mrn,$ $n\geq 3,$  be an open subset,  let $P(y,D)$ be a second order strictly hyperbolic operator and assume that  $\Omega\subset \mr^n$ is bicharacteristically convex with respect to $P(y,D).$    Let $t$ be a time function for $P(y,D)$ in $\Omega.$  This means that  there exists an open set $U$ such that $\Omega\subset U \subset \mr \times \mr^{n-1}$ such that
\begin{gather}
P(t,x, D)= \alpha(t,x)^2 \p_t^2-\sum_{jk} h_{jk}(t,x) \p_{x_j}\p_{x_k}, \;\ \alpha>0, \text{ in } U, \label{pdt}
\end{gather}
and $h_{jk}(t,x)\xi_j\xi_k$ is positive definite.

Let $\Sigma_j,$ $j=1,2,3,$ be $C^\infty$ hypersurfaces which are closed and characteristic for $P(y,D).$  Moreover, we assume that
the normals of the surfaces $\Sigma_j,$ $j=1,2,3,$ are linearly independent over the submanifolds
 \begin{gather}
  \begin{gathered}
 \Gamma_{jk} =\Gamma_j \cap \Gamma_k, \; j\not=k, \text{ and }
 \Gamma= \Sigma_1\cap \Sigma_2\cap \Sigma_3.
 \end{gathered} \label{intersections}
\end{gather}

  Let $v(y)= v_1(y)+ v_2(y)+ v_3(y),$ where $v_j(y)$ is a conormal distribution of appropriate order with respect to $\Sigma_j,$ $j=1,2,3,$ and assume that  
 
\begin{equation}
\begin{split}
P(y,D) & v_j(y) = 0, \ y \in \Omega, \;\ j=1,2,3.
\end{split} \label{Weq-0}
\end{equation}

We  will analyze the propagation of singularities of solutions $u(y) \in H^s_{\loc}(\Omega),$ $s>\frac{n}{2},$  of  semilinear wave equations of the form
\begin{equation}
\begin{split}
P(y,D) & u(y) =  \mcy(y) f(y,u(y)), \ y \in \Omega, \\
& u(y)= v(y),  \text{ for } t<-1,
\end{split} \label{Weq}
\end{equation}
where $t$ be a time function of $P(y,D)$ in $\Omega,$  $f\in C^\infty(\Omega\times \mr),$ $\mcy\in C_0^\infty(\Omega),$ $\mcy=0$ when $t<-1.$

Our results also apply, with minor changes of the proof, to  the forcing problem
\begin{equation}
\begin{split}
P(y,D) & u(y) = \ f(y,u(y))+g(y) , \ y \in \Omega, \\
& g(y)=u(y)= 0,  \text{ and } f(y,\bullet)=0 \text{ in } t<-1,
\end{split} \label{Weq-F}
\end{equation}
where $g(y)=g_1(y)+g_2(y)+g_3(y),$ $g_j$ conormal to $\Sigma_j,$ $j=1,2,3.$   This is the form of the equation used in the applications to the nonlinear inverse problems as in  \cite{Paternain,KLU,LUW,UW}.

Let $\Gamma$ be defined in \eqref{intersections} and let $N^*\Gamma\setminus 0$ denote its conormal bundle minus its zero section, and for each $q\in \Gamma,$  let  $N_q^*\Gamma\setminus 0$ denote  its fiber over $q.$  Let $p(y,\eta)$ denote the principal symbol of $P(y,D)$ and  let $H_p$  be its Hamilton vector field. For each $(q,\eta) \in (N^*_q\Gamma\setminus 0)\cap p^{-1}(0),$ let   $\gamma_{(q,\eta)}^+$ denote the forward null bicharacteristic for $p$ passing through 
$(q,\eta)$ and let
\begin{gather}
\La_q=\bigcup_{s>0}  \exp(s H_p) \left( (N_q^*\Gamma\setminus 0) \cap p^{-1}(0)\right)=  \bigcup_{(q,\eta)\in( N_q^*\Gamma\setminus 0) \cap p^{-1}(0)} \gamma_{(q,\eta)}^+, \label{laq}
\end{gather}
denote the flow-out of $(N_q^*\Gamma\setminus 0) \cap p^{-1}(0)$ by $H_p.$  Since $\Gamma$ has dimension $n-3,$ the dimension of the fiber of $N_q^*\Gamma\setminus 0$ is equal to three, and it is well known, see for example \cite{HormanderV3} that 
$\La_q$ is a three dimensional $C^\infty$ submanifold of $T^* \Omega\setminus 0$ which is isotropic, that is the canonical symplectic form vanishes on its tangent space. The manifold
\begin{gather}
\La= \bigcup_{q\in \Gamma}  \La_q, \label{defla}
\end{gather}
is a $C^\infty$ conic Lagrangian submanifold of $T^*\Omega \setminus 0.$   When $n=3,$ $\Gamma=\{q\}$ and $\La_q=\La.$

We will need to analyze the projection of the bicharacteristics $\gamma_{(q,\eta)}$ and the Lagrangian  $\La_q$ from $T^*\Omega\setminus 0$ to $\Omega.$  As usual,  we let 
\begin{gather*}
\Pi: T^* \Omega\setminus 0 \longrightarrow \Omega
\end{gather*}
denote the canonical projection, and for a bicharacteristic $\gamma_{(q,\eta)}^+,$ let 
\begin{gather}
\begin{gathered}
\sigma_{(q,\eta)}= \Pi(\gamma_{(q,\eta)}^+), \; (q,\eta)\in   (N_q^*\Gamma\setminus 0) \cap p^{-1}(0), \\
\mcq_q= \Pi(\La_q)= \bigcup_{\{\eta: \ (q,\eta) \in   (N_q^*\Gamma\setminus 0) \cap p^{-1}(0)\}} \sigma_{(q,\eta)}, \\
\mcq = \bigcup_{q \in \Gamma} \mcq_q.
\end{gathered}  \label{mcqq} 
\end{gather}  
In dimension $n=3,$ $\Gamma$ is a point and $\mcq_q=\mcq$ is the forward light cone for the operator $P$ over $\Gamma,$ and 
$\sigma_{(q,\eta)}$ is a characteristic line on the cone. It's well known that $\mcq\setminus \Gamma$ is a $C^\infty$ manifold in a neighborhood of $\Gamma.$

There exists one very important difference between the three and the higher dimensional cases. In the three dimensional case, the three waves intersect at a point and $\mcq$ is the light cone with vertex at this point, this is the only point where the cone $\mcq$ interacts with the three incoming waves.  In the general case, this is not true, see examples below.   Once the three incoming waves intersect  within the support of $\mcy(y) f(y,u),$ singularities will form on $\mcq$ and they will interact with the three incoming waves.  However, since $\Sigma$ is characteristic for $P(y,D),$ which is given by \eqref{pdt}, if locally $\Sigma=\{\vphi=0\}$ with $|(\p_t \vphi,\p_x\vphi)|>0,$ then 
\begin{gather*}
\alpha(t,x) (\p_t \vphi)^2-\sum_{jk} h_{jk}(t,x) \p_{x_j}\vphi \p_{x_k}\vphi=0,
\end{gather*}
and so, $|\p_t\vphi|>0.$  So we conclude that  the vector field $\p_t$ is transversal to $\Gamma.$  This gives a time orientation as time increases across $\Gamma.$  

The singularities produced by the interaction of three waves in $\mr^3,$ two space dimensions, is shown in Fig.\ref{Fig1}.  The figure shows the configuration for fixed time before and after the triple interaction.  The formation of singularities in $\mrn,$ $n>3,$ is much richer.  We give three  examples of the interaction of three plane waves in $\mr^4,$ which  illustrate   how $\Gamma$ and $\mcq$ can look like.

\begin{figure}
\centering
\psscalebox{.6} 
{
\begin{pspicture}(0,-2.9017625)(20.015244,2.9017625)
\psline[linecolor=black, linewidth=0.04](4.415244,2.8888159)(0.015243912,-2.2963693)
\psline[linecolor=black, linewidth=0.04](1.6152439,2.3036306)(1.6152439,2.3036306)(5.215244,-2.0963693)
\psline[linecolor=black, linewidth=0.04](11.215244,1.9036307)(8.015244,-2.0963693)
\psline[linecolor=black, linewidth=0.04](7.015244,1.5036306)(11.415244,-2.0963693)(11.415244,-2.0963693)
\psline[linecolor=black, linewidth=0.04](6.8152437,-0.49636933)(13.815244,-0.2963693)
\rput[bl](9.215244,-1.4963694){$t=0$}
\rput[bl](2.015244,-1.4963694){$t<-1$}
\rput[bl](15.923577,-1.4297026){$t>0$}
\psline[linecolor=black, linewidth=0.04](0.2152439,-0.49636933)(5.615244,-0.49636933)
\pscircle[linecolor=black, linewidth=0.04, dimen=outer](17.215244,-0.096369326){0.8}
\psline[linecolor=black, linewidth=0.04](14.215244,1.3036307)(18.615244,-2.2963693)(18.615244,-2.2963693)
\psline[linecolor=black, linewidth=0.04](14.015244,0.7036307)(20.015244,0.7036307)
\psline[linecolor=black, linewidth=0.04](18.615244,2.1036308)(17.215244,-2.8963692)
\rput[bl](14.015244,2.3036306){possible new singularities}
\rput[bl](14.015244,1.7036307){after the triple interaction}
\end{pspicture}
}
\caption{The interaction of three conormal plane waves in two space dimensions. The only possible singularities created by the triple interaction appear on the surface of the light cone.  Fig.\ref{Fig3WN}, Fig.\ref{wave1}  and Fig.\ref{figure4} below illustrate the higher dimensional cases.}
\label{Fig1}
\end{figure}
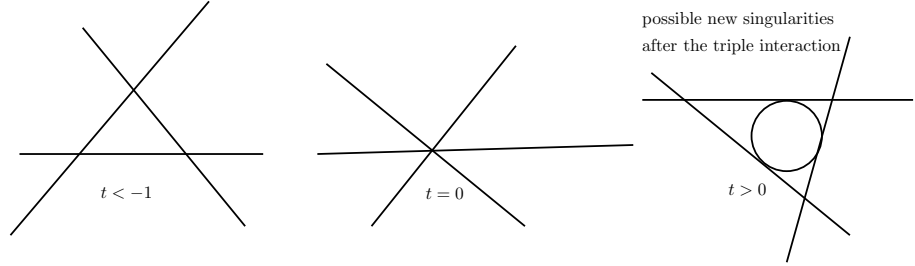

The operator is  the Minkowski wave operator in $\mr^4$ with coordinates $(t,x),$ $x=(x_1,x_2,x_3):$ 
\begin{gather*}
P(t,x)= \p_t^2-\p_{x_1}^2-\p_{x_2}^2-\p_{x_3}^2.
\end{gather*}

In the first example we take three plane waves:
\begin{gather}
\Sigma_1=\{t=x_1\}, \;\ \Sigma_2=\{t=x_2\} \text{ and } \Sigma_3=\{t=\frac{1}{\sqrt{2}}(x_1+x_2)\}, \label{Surf-1}
\end{gather}
which will intersect transversally at
\begin{gather*}
\Gamma=\{ t=0, x_1=0, x_2=0\}.
\end{gather*}
The conormal bundle to $\Gamma$ is
\begin{gather*}
N^*\Gamma\setminus 0=\{ t=0, x_1=0, x_2=0, \xi_3=0\},
\end{gather*}
and the flow out of $N^*\Gamma\setminus 0$ is given by
\begin{gather*}
\La= \{(t,x,\tau,\xi): \tau=\tau_0,\;  \xi=\xi_0, \; t=2\tau s, \; x_1=-2\xi_1 s, \; x_2=-2\xi_2 s,\;  x_3= x_{03}, \; \tau=|\xi|, \; \xi_3=0\}.
\end{gather*}
The projection of $\La$ to $\mr^4$ is given by
\begin{gather*}
\mcq=\{ (t,x) \in \mr^4: t=(x_1^2+x_2^2)^\ha\}.
\end{gather*}
This can be viewed as a fiber bundle over $\Gamma$ where the fiber over a point $(0,0,0,x_{03})\in \Gamma$ is the circle  $t=(x_1^2+x_2^2)^\ha.$  An observer sitting at $(0,0,x_3)$ will see a circular wave expanding with speed one, see Fig.\ref{Fig3WN}.  
\begin{figure}[h!]
\scalebox{.6} 
{
\begin{pspicture}(0,-3.0352242)(12.5,3.0094604)
\usefont{T1}{ptm}{m}{n}
\rput(0.6514551,-2.0655394){$x_1$}
\usefont{T1}{ptm}{m}{n}
\rput(3.6514552,2.7344606){$x_2$}
\usefont{T1}{ptm}{m}{n}
\rput{-15.98614}(0.8626451,1.7830542){\rput(6.761455,-2.1655395){$x_1^2+x_2^2=t^2$}}
\usefont{T1}{ptm}{m}{n}
\rput{-15.598908}(0.4574698,1.8194718){\rput(6.851455,-0.7455394){$\Gamma=\{x_1=x_2=0\}$}}
\usefont{T1}{ptm}{m}{n}
\rput{-20.62775}(1.7116534,4.045296){\rput(11.951455,-2.6655395){$x_3$}}
\psline[linewidth=0.04cm,linestyle=dashed,dash=0.16cm 0.16cm](1.38,1.3094605)(10.4,-1.0105394)
\psline[linewidth=0.04cm,linestyle=dashed,dash=0.16cm 0.16cm](1.4,-0.29053944)(10.26,-2.6305394)
\psline[linewidth=0.04cm](3.06,2.9894605)(3.0,0.029460559)
\psline[linewidth=0.04cm](3.0,0.029460559)(12.48,-2.4305394)
\psline[linewidth=0.04cm](2.98,0.04946056)(0.0,-1.8505394)
\psellipse[linewidth=0.04,linestyle=dashed,dash=0.16cm 0.16cm,dimen=outer](1.34,0.52946055)(0.5,0.82)
\psellipse[linewidth=0.04,linestyle=dashed,dash=0.16cm 0.16cm,dimen=outer](4.78,-0.33053944)(0.5,0.82)
\psellipse[linewidth=0.04,linestyle=dashed,dash=0.16cm 0.16cm,dimen=outer](10.32,-1.8505394)(0.5,0.82)
\end{pspicture} 
}
\caption{ The dotted line represents an expanding cylindrical wave, generated by the interaction of three plane waves  given by \eqref{Surf-1} in $\mr^4,$  viewed by an observer in $\mr^3$  as time increases.  The speed in which the radius of the wave expands is equal to one.}
\label{Fig3WN}
\end{figure}
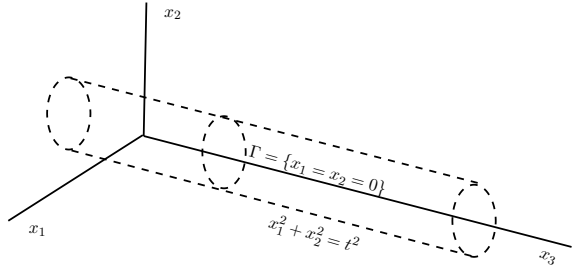

In the second example, we pick three plane waves,
\begin{gather}
\Sigma_j=\{t=x_j\}, \;\ j=1,2,3. \label{sl-sig}
\end{gather}
but in this case they meet at $\Gamma=\{t=x_1=x_2=x_3\},$ which is not contained at a level surface of $t.$ The conormal bundle to $\Gamma$ is given by
\begin{gather*}
N^*\Gamma=\{(t,x,\tau,\xi): t=x_1=x_2=x_3, \tau=\tau_0, \xi=\xi_0, \tau+\xi_1+\xi_2+\xi_3=0\},
\end{gather*}
and so the Lagrangian $\La$ is given by
\begin{gather*}
\La=\{ t=a+ 2\tau s, x_j= a-2\xi_j s, \; \xi_j=\xi_{j0}, \; \tau=\tau_0, \;  \tau+\xi_1+\xi_2+\xi_3=0, \; \tau=|\xi|, \ a, s\in \mr\}. 
\end{gather*}
Its projection to $\mr^4$ is given by
\begin{gather*}
\mcq=\{(3t-x_1-x_2-x_3)^2=(x_1-x_2-x_3+t)^2+(x_2-x_1-x_3+t)^2+(x_3-x_2-x_1+t)^2 \}.
\end{gather*}
One can check that $t-x_1-x_2-x_3=-2a$ on $\mcq,$ where the parameter $a$ gives the position of a point  on $\Gamma.$ So to consider the behavior of $\mcq$ for a fixed time $t,$ one should restrict the variable $a\in [t, A_1].$   The forward part of $\mcq$  for fixed $t$ and viewed by an observer in $\mr^3$ is part of a  cone with axis of symmetry $L=\{x_1=x_2=x_3\}$ and vertex at $(t,t,t),$  bounded by the planes $3t \leq x_1+x_2+x_3\leq t+2A_1,$  see figure Fig.\ref{figure4}.
\begin{figure}
\scalebox{.6} 
{
\begin{pspicture}(0,-5.3376164)(7.8818946,5.3376164)
\psline[linewidth=0.04cm](0.02,5.2223835)(0.0,-2.8776162)
\psline[linewidth=0.04cm](0.0,-2.9176164)(5.96,-5.3176165)
\psline[linewidth=0.04cm](0.04,-2.9176164)(6.72,0.0023837236)
\psline[linewidth=0.04cm](0.04,-2.8976164)(5.48,5.2023835)
\psline[linewidth=0.04cm,linestyle=dashed,dash=0.16cm 0.16cm](2.4,4.6423836)(2.0,0.08238372)
\psline[linewidth=0.04cm,linestyle=dashed,dash=0.16cm 0.16cm](2.06,0.062383723)(6.54,2.5823836)
\psbezier[linewidth=0.04,linestyle=dashed,dash=0.16cm 0.16cm](2.38,4.5223837)(2.38,3.7223837)(6.1190577,1.7152963)(6.54,2.6223838)(6.9609423,3.5294712)(3.0388865,5.3176165)(2.34,4.6023836)
\usefont{T1}{ptm}{m}{n}
\rput(2.591455,-0.092616275){$(t,t,t)$}
\usefont{T1}{ptm}{m}{n}
\rput(5.991455,-4.9126163){$x_1$}
\usefont{T1}{ptm}{m}{n}
\rput(0.57145506,4.967384){$x_3$}
\usefont{T1}{ptm}{m}{n}
\rput(6.831455,-0.47261629){$x_2$}
\usefont{T1}{ptm}{m}{n}
\rput(6.651455,4.887384){$x_1=x_1=x_3$}
\end{pspicture} 
}
\caption{Singularities produced by the intersection of three plane waves \eqref{sl-sig}. An observer in $\mr^3$ sees a conic shaped wave.}
\label{figure4}
\end{figure}
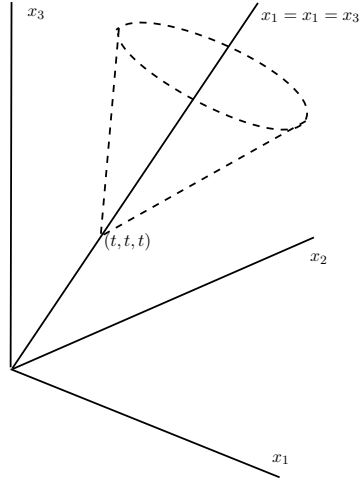

In the third example we pick  three spherical waves centered at at $p_1=(0,0,0,0),$ $p_2=(2a,0,0,0),$ and $p_3=(0,2b,0,0)$respectively.   These are represented by forward three light  cones with vertices at $p_j,$ $j=1,2,3:$
 \begin{gather*}
 t= (x_1^2+x_2^2+ x_3^2)^\ha,  \\
 t=((x_1-2a)^2+x_2^2+x_3^2)^\ha,\\
 t=(x_1^2+(x_2-2b)^2+x_3^2)^\ha.
 \end{gather*}
 These waves  will intersect transversally at the hyperbola
 \begin{gather*}
 \Gamma=\Gamma_{a,b}= \{ (t,x): x_1=a, x_2=b,  \;\  t=(x_3^2+ a^2+b^2)^\ha\},
  \end{gather*}
 whose conormal bundle is given by
 \begin{gather*}
 N^*\Gamma=\{(t,x,\tau,\xi): x_1=a, \; x_2=b, \; t=(x_3^2+a^2+b^2)^\ha, \; x_3\tau + t \xi_3=0\}.
 \end{gather*}
 The Lagrangian submanifold $\La$ obtained by the forward flow-out of $(N^*\Gamma\setminus 0) \cap \{p=0\},$ is given by
 \begin{gather*}
 \text{ for } t_0= |x_0|, \;  x_0=(a, b, x_{03}),  \;\ \tau_0=|\xi_0|, \;\ \xi_0=(\xi_{01}, \xi_{02}, \xi_{03}),\\
 \xi_1=\xi_{01}, \; \xi_2=\xi_{02}, \;  \xi_3=\xi_{03}, \;\ \tau=\tau_0, \;\ x_{03}\tau_0+ t_0 \xi_{03}=0, \\  
 x_1=a -2\xi_{01} s, \;\ x_2=b -2\xi_{02} s, \;\ x_3= x_{03}-2\xi_{03} s, \;\ t=t_0+2\tau_0 s, \;\ s \in \mr.
 \end{gather*}

 The projection of $\La_{Q}$ to $\mr^4$ is denoted by $\mcq$ and it is given by 
 \begin{gather}
\mcq=\mcq_{a,b}=\{(t, x): (x_1-a)^2+(x_2-b)^2= \left[(t^2-x_3^3)^\ha-(a^2+b^2)^\ha\right]^2, \;\ x_3=-\frac{x_{03}}{t_0} t\},\label{eq-mcq}
\end{gather}
which is again a fibered bundle over $\Gamma$ whose fibers over the point $(t_0,x_0)=(t_0,a,b,x_{03})\in \Gamma$ given by these equations.
\begin{figure}
\scalebox{.6} 
{
\begin{pspicture}(0,-7.578848)(16.68291,7.5588474)
\psline[linewidth=0.04cm](3.6210155,7.5388474)(3.5410156,-0.22115235)
\psline[linewidth=0.04cm](3.5410156,-0.24115235)(0.40101564,-4.1211524)
\psline[linewidth=0.04cm](3.6010156,-0.22115235)(13.081016,-0.22115235)
\psdots[dotsize=0.12](7.7610154,-1.6011523)
\psbezier[linewidth=0.04,linestyle=dashed,dash=0.16cm 0.16cm](7.841016,4.4588475)(6.1610155,3.6988478)(5.1410155,-0.20115234)(5.321016,-1.6011523)
\psbezier[linewidth=0.04,linestyle=dashed,dash=0.16cm 0.16cm](7.7410154,-7.0011525)(6.6010156,-6.401152)(5.4881234,-4.9989667)(5.2810154,-1.5811523)
\usefont{T1}{ptm}{m}{n}
\rput(0.4124707,-4.5161524){$x_1$}
\usefont{T1}{ptm}{m}{n}
\rput(13.01247,-0.056152344){$x_2$}
\usefont{T1}{ptm}{m}{n}
\rput(4.352471,7.223848){$x_3$}
\psline[linewidth=0.04cm,linestyle=dashed,dash=0.16cm 0.16cm](7.821016,-1.5811523)(10.581016,-1.5411524)
\usefont{T1}{ptm}{m}{n}
\rput(13.872471,-1.5961523){$x_3=0,$ radius $R=t-(a^2+b^2)^\ha$}
\usefont{T1}{ptm}{m}{n}
\rput(11.31247,4.6238475){$x_3=(t^2-a^2-b^2)^\ha,$ radius $R=0.$}
\psbezier[linewidth=0.04,linestyle=dashed,dash=0.16cm 0.16cm](7.841016,4.4388475)(9.281015,3.7788477)(10.389645,1.4163563)(10.6610155,-1.6611524)
\psbezier[linewidth=0.04,linestyle=dashed,dash=0.16cm 0.16cm](7.801016,-7.0411525)(9.001016,-6.2611523)(10.341016,-5.5611525)(10.641016,-1.5811523)
\usefont{T1}{ptm}{m}{n}
\rput(8.382471,-1.2961524){$(a,b)$}
\psellipse[linewidth=0.04,linestyle=dashed,dash=0.16cm 0.16cm,dimen=outer](7.881016,1.4188477)(2.16,0.24)
\psellipse[linewidth=0.04,linestyle=dashed,dash=0.16cm 0.16cm,dimen=outer](7.9210157,-3.9811523)(2.32,0.22)
\psellipse[linewidth=0.04,linestyle=dashed,dash=0.16cm 0.16cm,dimen=outer](7.9210157,-1.5711523)(2.74,0.43)
\psline[linewidth=0.04cm](7.821016,4.4588475)(7.801016,-7.0411525)
\usefont{T1}{ptm}{m}{n}
\rput(11.092471,-7.3561525){$x_3=-(t^2-a^2-b^2)^\ha,$ radius $R=0.$}
\end{pspicture} 
}
\caption{The dotted line shows the surface \eqref{eq-mcq} as $(x_1,x_2,x_3)$ vary for $t$ fixed.  Unlike the wave formed by the interaction of three plane waves considered above, which is an infinite cylinder,  three spherical waves intersect along a bounded curve for fixed time. The level sets of this surface for $\{x_3=c\}$ are circles centered on the line $\{x_1=a, x_2=b\}.$ }
\label{wave1}
\end{figure}
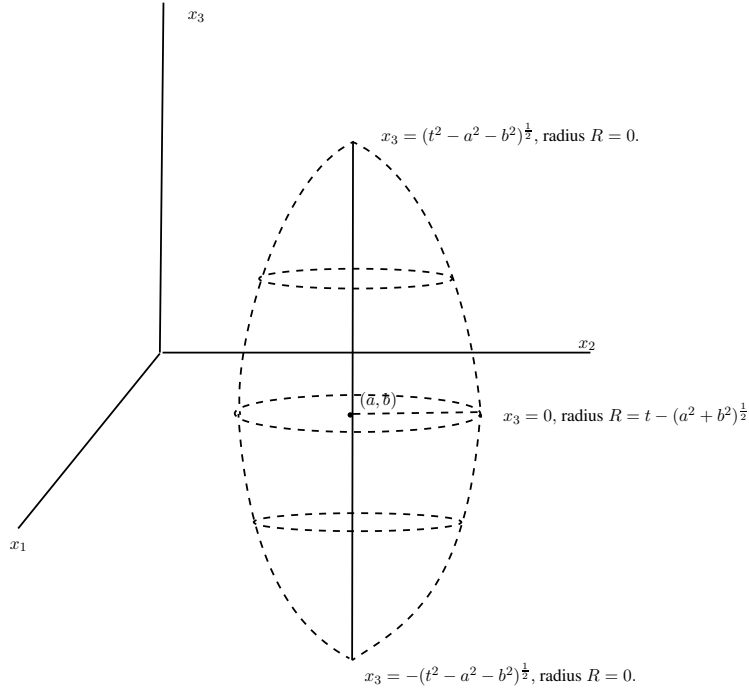

One can also think about $\mcq_{a,b}$  from the point of view of of an observer in $\mr^3.$  One has three spherical waves, centered at  $p_1,p_2$ and $p_3$ and expanding with speed one. They  first meet at the point $(a,b,0)$ which is equidistant from their centers $p_1,$ $p_2$ and $p_3,$ at a time $t_0=(a^2+b^2)^\ha,$ which is equal to the distance between any of the centers to the point of interaction.  After that, a wave centered at the line $L=\{x_1=a, x_2=b\}$ will form for times $t\geq (a^2+b^2)^\ha.$  

For a fixed time $t\geq t_0=(a^2+b^2+x_{03}^2)^\ha,$ $t>|x_{03}|,$ and since $ x_3=\frac{x_{03}}{t_0} t,$ $t>|x_3|.$    This is consistent with the fact that for fixed $t,$ the three spherical waves intersect along a bounded segment of the hyperbola and the surface of the newly formed wave is bounded for bounded times, see Fig.\ref{wave1}.

Now, with  $x_3$ fixed, and  $t>x_3$ increasing, this is an expanding circular wave centered at $(a,b)$ with radius
\begin{gather*}
R= (t^2-x_3^2)^\ha-(a^2+b^2)^\ha,
\end{gather*}
and therefore $\frac{dR}{dt}=\frac{t}{(t^2-x_3^2)^\ha}\geq 1,$ which will give the appearance that the circular wave is moving faster speed  than the speed of light.

\section{Statements of the Main Results}

In Section \ref{products} below we will define spaces of conormal distributions to a submanifold $\mcm,$ which we shall denote by $I^m(\Omega,\mcm).$    We also discuss products of conormal distributions conormal to transversal hypersurfaces. We shall prove  that if $v_j\in I^m(\Omega,\Sigma_j),$ $j=1,2,3,$  are conormal distributions to $\Sigma_j,$ $j=1,2,3,$ then
\begin{gather}
v_1v_2v_3=  v_T + \sum_{j,k=1}^3 v_{jk} + \sum_{j=1}^3 w_j, \label{prod-st}
\end{gather}
 where $v_T$ is a product-type conormal distribution with respect to the submanifold $\Gamma,$ $v_{jk}$ is a product-type conormal distribution associated to $\Gamma_{jk}$ respectively, and $w_j$ are conormal distributions to $\Sigma_j.$    By product-type conormal distributions, we mean their symbols are product-type symbols which are defined below. The results of Melrose and Ritter \cite{MelRit}, Bony \cite{Bony5,Bony6} and S\'a Barreto \cite{SaB,SaB2} guarantee that if $v$ satisfies the hypothesis of Theorem \ref{triple-0}, then  the solution $u$ of \eqref{Weq} is conormal (in a suitable sense) to $\mcq\cup \Sigma_1\cup \Sigma_2 \cup \Sigma_3.$   As explained above, one has to be careful when applying the results of \cite{Bony5,Bony6,MelRit}.  The new wave $\mcq$ emanating from $\Gamma$ will interact with the three original waves.  However, if one is just interested in showing that after the interaction, the singularities of $u$ will be contained in $\Sigma_1\cup\Sigma_2\cup\Sigma_3\cup\mcq,$ it is enough to assume that $u$ is conormal to $\Sigma_1\cup\Sigma_2\cup\Sigma_3\cup\mcq$ (in a suitable way)  in the past, and show that it the solution remains conormal to the three original surfaces and to $\mcq$ in a neighborhood of $\Gamma.$  This is explained in details in Theorem \ref{REG} below.

 Our first result gives the principal part of the singularity of the solution $u$ in a neighborhood of $\Gamma,$ microlocally near $N^*\mcq\setminus 0$ and away from $N^*\Sigma_j,$ $j=1,2,3.$  In what follows, we define $(\p_u^3 f)(y,u(y))|_{\Gamma}$ to be the restriction of $(\p_u^3 f)(y,u(y))$ to $\Gamma.$  In local coordinates $y=(y',y''),$ $y'=(y_1,y_2,y_3),$ where $\Sigma_j=\{y_j=0\},$ $j=1,2,3,$ and $\Gamma=\{(0,0,0,y")\},$ we have $(\p_u^3 f)(y,u(y))|_{\Gamma}=(\p_u^3 f)(0,0,0,y'',u(0,0,0,y'')).$ 
The distributions in question here are continuous functions, so there is no problem defining this operation. We will also show that $(\p_u^3 f)(y,u(y))|_{\Gamma} v_T$ is invariantly defined modulo smoother terms.

 \begin{theorem}\label{triple-0}  Let $\Omega,$ $P(y,D),$ $t,$ $f(y,u),$ $\Sigma_j\subset  \Omega,$ $1\leq j \leq 3,$ $\Gamma$  
 and $\mcq$ be as defined above.   Let $v_j\in I^{\msi}(\Omega,\Sigma_j),$ $m<-\ha(n+7),$ be conormal distributions to $\Sigma_j,$  satisfying $P(y,D)v_j=0,$ $j=1,2,3,$  in $\Omega.$  Let $u$  be the solution to \eqref{Weq}.  For each $q \in \Gamma$ there exists a neighborhood $\Omega_q$ of $q$ such that microlocally away from $N^*\Sigma_j\setminus 0,$  $j=1,2,3,$  and  $N^*\Gamma\setminus 0,$  $u\in I^{3m-\frac{n}{4}}(\Omega_q,\mcq),$ and $u(y)-E_+\left( \left[(\p_u^3 f)(y,u(y))|_{\Gamma}\right] v_T\right)\in I^{3m-\frac{n}{4}-1}(\Omega_q,\mcq).$
\end{theorem}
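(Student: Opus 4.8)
The plan is to reduce the analysis of $u$ near $\Gamma$ to the analysis of $E_+$ applied to the leading term of the nonlinearity $\mcy(y) f(y,u(y))$, and then compute that leading term using the product formula \eqref{prod-st}. First I would use a bootstrapping/iteration argument: since $u\in H^s_{\loc}$ with $s>\frac n2$, and since (by the regularity results of Melrose--Ritter, Bony, and the statement contained in \cite{SaB,SaB2}, summarized in the forthcoming Theorem \ref{REG}) the solution $u$ is conormal in a suitable sense to $\Sigma_1\cup\Sigma_2\cup\Sigma_3\cup\mcq$ both before and after $\Gamma$, I may write $u = v + u_1$ where $v=v_1+v_2+v_3$ is the incoming triple of conormal waves and $u_1 = E_+(\mcy f(\cdot,u))$ solves the equation with zero Cauchy data in $t<-1$. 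Here $E_+$ is the forward fundamental solution of $P$. The point is that $u_1$ is strictly smoother than each $v_j$ away from the $\Sigma_j$, so microlocally near $N^*\mcq\setminus 0$ and away from the $N^*\Sigma_j\setminus 0$ and $N^*\Gamma\setminus 0$ the singularity of $u$ is entirely carried by $u_1$, hence by $E_+$ applied to the worst part of $\mcy f(\cdot,u)$.

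Next I would Taylor-expand the nonlinearity. Since $f\in C^\infty$ and $u = v + u_1$ with $u_1$ smoother, write $f(y,u(y)) = f(y,v(y)+u_1(y))$ and expand around $u_1$ or more conveniently track which products of the $v_j$ produce singularities conormal to $\mcq$. The terms $f(y,0)$, $(\p_u f)(y,0)\, v$, $(\p_u^2 f)(y,0)\,v^2/2$ and the like: the constant and linear terms do not create new singularities on $\mcq$ (the linear term just propagates the conormal singularities of the $v_j$ along $\Sigma_j$), and the quadratic term $v^2$ contains only products $v_jv_k$ which are conormal to $\Gamma_{jk}$ and to the $\Sigma_j$ — by Bony's theorem \cite{Bony3} the pairwise transversal interaction produces nothing new on $\mcq$. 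The first term that can produce a singularity conormal to $\mcq$ is the cubic one: $\frac{1}{3!}(\p_u^3 f)(y, \text{(lower order)})\, v^3$ and, within $v^3 = (v_1+v_2+v_3)^3$, by the product formula \eqref{prod-st} the only piece conormal (of product type) to $\Gamma$ is $6\, v_1 v_2 v_3 = 6 v_T + \dots$; the $v_{jk}$ and $w_j$ pieces are conormal to $\Gamma_{jk}$ or $\Sigma_j$ and again produce no new singularity on $\mcq$ near $\Gamma$. Collecting: microlocally near $N^*\mcq\setminus 0$, away from the $N^*\Sigma_j$ and $N^*\Gamma$,
\begin{gather*}
\mcy(y) f(y,u(y)) \equiv \mcy(y)\,(\p_u^3 f)(y,u(y))\, v_T \pmod{\text{smoother}},
\end{gather*}
where one must justify replacing $(\p_u^3 f)(y, \text{lower order arguments})$ by $(\p_u^3 f)(y,u(y))$ (legitimate since the difference multiplies $v_T$ by a less singular factor and $v_T$ is a fixed conormal distribution), and then replacing it by its restriction $(\p_u^3 f)(y,u(y))|_\Gamma$ to $\Gamma$ (legitimate modulo one order because $v_T$ is product-conormal to $\Gamma$ and multiplying by a smooth function that vanishes on $\Gamma$ drops the symbol order by one — this is the invariance assertion stated just before the theorem). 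Since $\mcy\equiv 1$ near $\Gamma$ it drops out.

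Then I would invoke the mapping properties of $E_+$ on conormal distributions: $E_+$ maps product-type conormal distributions associated to $\Gamma$ to distributions which, microlocally away from $N^*\Gamma$, are conormal to the flow-out Lagrangian $\La = \Pi^{-1}$-lift of $\mcq$ (this is exactly the content of the Melrose--Ritter/Bony analysis, and the order shift is the standard one for a second-order operator applied to a Lagrangian of the relevant codimension: one computes that $E_+$ applied to an element of the product space built from $I^{\msi}$-waves lands in $I^{3m-\frac n4}(\Omega_q,\mcq)$). So $u_1 = E_+(\mcy f(\cdot,u)) \in I^{3m-\frac n4}(\Omega_q,\mcq)$ with principal symbol determined by that of $E_+((\p_u^3 f)|_\Gamma\, v_T)$, and the difference $u - E_+((\p_u^3 f)(y,u(y))|_\Gamma\, v_T) = (v + u_1) - E_+((\p_u^3 f)|_\Gamma v_T)$, which microlocally in the relevant region equals $u_1 - E_+((\p_u^3 f)|_\Gamma v_T) = E_+\big(\mcy f(\cdot,u) - (\p_u^3 f)|_\Gamma v_T\big)$, lies in $I^{3m-\frac n4 -1}(\Omega_q,\mcq)$ by the congruence above together with the order-lowering property of $E_+$. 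The neighborhood $\Omega_q$ must be chosen small enough that $\mcq\setminus\Gamma$ is smooth there and the bicharacteristics from $N^*_q\Gamma$ do not re-meet the $\Sigma_j$, so that the "microlocally away from $N^*\Sigma_j$" localization is consistent.

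The main obstacle, and the step deserving the most care, is the bookkeeping of orders and the justification that the iteration/Taylor expansion of $f(y,u)$ in the conormal calculus actually closes — i.e. that all the error terms generated (the tail of the Taylor expansion of $f$, the lower-order parts of the triple product, the replacement of the argument of $\p_u^3 f$, and the composition with $E_+$) are genuinely in $I^{3m-\frac n4-1}$ and not merely in some coarser Sobolev space. This requires the hypothesis $m<-\ha(n+7)$ to guarantee that the incoming waves are singular enough (so that the cubic interaction dominates and products of conormal distributions behave as claimed, with $u_1$ strictly gaining regularity at each relevant order), and it requires the precise product structure \eqref{prod-st} together with a clean composition theorem for $E_+$ on the product-conormal spaces associated to $\Gamma$ — essentially the technical heart of the paper, which I would set up in the "products" section referenced as Section \ref{products}. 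Controlling the nonlinear remainder $f(y,u) - f(y,v) - \dots$ via a fixed-point / Picard iteration in an appropriate conormal space, rather than just in $H^s$, is where the real work lies.
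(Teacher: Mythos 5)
Your skeleton is the same as the paper's (iterate $u=v+E_+(F(y,u))$ with $F=\mcy f$, Taylor expand the nonlinearity, isolate the triple product $v_T$, freeze the cubic coefficient on $\Gamma$, and apply $E_+$, whose flow-out mapping property gives $I^{3m-\frac n4}(\Omega_q,\mcq)$ via the Greenleaf--Uhlmann paired-Lagrangian calculus), but the two central steps are not correct as you state them. First, the expansion point: you expand $f(y,v+u_1)$ around $0$ or ``around $u_1$,'' which produces the coefficient $(\p_u^3f)(y,0)$ or $(\p_u^3f)(y,u_1)=(\p_u^3f)(y,u-v)$; neither restricts on $\Gamma$ to $(\p_u^3 f)(y,u(y))|_\Gamma$, because $v$ does \emph{not} vanish on $\Gamma$ (the $v_j$ are continuous conormal distributions, generically nonzero on $\Sigma_j$). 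The paper first writes $v_j=\nu_j+\mce_j$ (Proposition \ref{decomp-01}) with $\nu_j$ vanishing to order $k(m)$ on $\Sigma_j$ and $\mce_j\in C^\infty$, absorbs $\mce=\sum_j\mce_j$ into $\mcw=\mce+E_+(F(y,u))$ as in \eqref{defmcw-0}, and expands around $\mcw$; since $\nu|_\Gamma=0$ one has $\mcw|_\Gamma=u|_\Gamma$, so the third Taylor coefficient automatically restricts to the claimed one. This decomposition is not cosmetic: the vanishing of $\nu_j$ on $\Sigma_j$ (via Proposition \ref{prod1}) is also what makes the quartic integral remainder of an arbitrary $C^\infty$ nonlinearity and the mixed terms $\nu_j^a\nu_k^b$ land in the smoother class $\mck^{0,m}$ (Proposition \ref{Reg-Terms}, Lemma \ref{TREM}); with the raw $v_j$ that bookkeeping does not close, and your assertion that the discrepancy ``multiplies $v_T$ by a less singular factor'' is not a proof.

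Second, the restriction of the coefficient to $\Gamma$: your justification --- multiplying $v_T$ by a smooth function vanishing on $\Gamma$ drops the symbol order by one --- does not apply, because $(\p_u^3f)(y,u(y))$, equivalently $(\p_u^3F)(y,\mcw)$, is \emph{not} smooth; it only has the finite regularity $\mch_{\loc}^{1-,-m-\ha,-m-\ha,-m-\ha}(\Omega)$ recorded in \eqref{defmcw}, so Proposition \ref{xtoalpha} is unavailable. Proving that $\bigl[(\p_u^3F)(y,\mcw)-(\p_u^3F)(0,0,0,y'',u(0,0,0,y''))\bigr]V$ is negligible is the technical heart of the proof: in the paper it is done on the Fourier transform side, after microlocalizing with the cutoff $A(D)$ of \eqref{symbol-A} to the region where all three $\eta_j$ are elliptic, by the convolution asymptotics of Lemma \ref{asyFT}; even then the gain is only $r/2$ with $r<1-\frac{2}{-m-\ha}$, and this is precisely where the hypothesis $m<-\ha(n+7)$ and the generalized Beals spaces enter (their algebra and composition properties, Proposition \ref{BSP2}, the $E_+$ mapping property proved by commutators, Proposition \ref{BSP3}, and the invariance statements needed to use them for solutions of \eqref{Weq}). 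You correctly identify that ``the bookkeeping of orders is where the real work lies,'' but the scheme you propose omits the $\nu_j$-splitting, the choice of $\mcw$ as expansion center, the $A(D)$ microlocalization, and the convolution lemma, and these are exactly the points at which the steps you label ``smoother'' would otherwise fail or remain unproved.
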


Some remarks about Theorem \ref{triple-0}:
\begin{enumerate}[1.]
\item   If  $(\p_u^3f)(y,u)|_{\Sigma}=0,$  where $\Sigma\subset \Gamma$ is a relatively open subset, Theorem \ref{triple-0} does not give any information about the leading order singularities of $u$ on the part of $\mcq$ corresponding to the flow-out of $\Sigma.$  The solution may very well be $C^\infty$ away from the incoming waves.
\item  S\'a Barreto and Wang \cite{SaWang1} proved Theorem \ref{triple-0} in the case where $n=3$ and $f(y,u)$ is a polynomial in $u$ with $C^\infty$ coefficients.
\item  M. Beals \cite{Beals4} proved the local version of this result for $n=3:$ Namely,  the operator $P(y,D)$ has constant coefficients,   $\Sigma_j=\{y_j=0\},$ $v_j=y_{j+}^m,$  $m\in \mn,$   and $f(y,u)=a(y) u^3.$  A modification of the spaces  introduced by Beals in \cite{Beals4} are an important part of this paper and of \cite{SaWang1}.
\end{enumerate}

We also analyze the global behavior of the singularity at $\mcq,$ as long as $\mcq$ remains $\CI.$ The result of Bony \cite{Bony3} on the propagation of conormal singularity along a $C^\infty$ characteristic surface shows that, away from $\Gamma,$ and as long as $\mcq$ remains smooth, $u$ is conormal to $\mcq.$   We want to analyze the evolution of the principal symbol of $u$ along $\mcq.$  This was done by
Rauch and Reed \cite{RauRee1} and Piriou \cite{Pir}, and it is straightforward  for semilinear equations $P(y,D)u=f(y,u).$  Piriou \cite{Pir} studied the evolution of  the principal symbol of solutions of fully nonlinear equations which are conormal to a surface, with the assumption that the surface is a priori known to be $\CI.$   We will prove the following for the convenience of the reader:
 
\begin{prop}\label{REG-D1} Let $\Omega\subset \mr^n$ be open subset and  $P(y,D)$ be a $C^\infty$ second order strictly hyperbolic operator.  Assume that $\Omega$ is bicharacteristically convex with respect to $P(y,D)$ and let $t$ be a time function for $P(y,D).$ Let 
$\Sigma\subset \Omega$ be a $C^\infty$ closed hypersurface that is characteristic for $P(y,D).$ If   
$u$ satisfy \eqref{Weq} and $u\in I^{\mu}(\Omega\cap \{t<0\},\Sigma),$ and $\mu+\novf-\ha<-1,$  then  $u\in I^{\mu}(\Omega,\Sigma).$ Moreover, if $a$ is the principal symbol of $u,$ and $p$ is the principal symbol of $P(y,D),$ then $(\mcl_{H_p} +c)a=0,$ where $\mcl_{H_p}$ is the Lie derivative with respect to the Hamilton vector field $H_p$ and $c$ is the subprincipal symbol of $P(y,D).$
\end{prop}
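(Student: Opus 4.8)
The plan is to reduce everything to a local statement near $\Sigma$ and then to run a standard conormal-regularity / transport-equation argument. I will work with the module $\mathcal{M}$ of first-order vector fields tangent to $\Sigma$; by definition $u\in I^\mu(\Omega\cap\{t<0\},\Sigma)$ means $u$ and all iterated applications of elements of $\mathcal{M}$ to $u$ lie in a fixed $H^s_{\loc}$ space of Sobolev-Besov type matching order $\mu$. Since the statement is local and $\Sigma$ is characteristic, I may choose coordinates $y=(y_1,y')$ with $\Sigma=\{y_1=0\}$ and write $P$ in the form where the coefficient of $\partial_{y_1}^2$ vanishes on $\Sigma$; concretely $P=y_1 A\partial_{y_1}^2 + B\partial_{y_1} + Q(y',D_{y'})\partial_{y_1} + R$, with $Q$ first order in $y'$-derivatives and $R$ second order tangential, so that $P$ itself lies (up to lower order) in the enveloping algebra of $\mathcal{M}$. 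The first step is to verify that $f(y,u)\in I^\mu(\Omega\cap\{t<0\},\Sigma)$: this is the nonlinear composition/Schauder step. Because $\mu+\tfrac n4-\tfrac12<-1$ forces $u$ to be a continuous (indeed Hölder) function by Sobolev embedding, the Moser-type estimate $MfMu\cdots$ applies, and the standard fact that conormal spaces are preserved under composition with smooth functions of a conormal function (Bony, Rauch-Reed, Beals) gives $f(y,u)\in I^\mu$ as well — in fact with the same order, since composition does not improve the order. This uses only that $I^\mu(\Omega,\Sigma)$ is an algebra that is stable under smooth nonlinear maps for $\mu$ in the stated range, which is part of the conormal machinery developed in Section \ref{products}.

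The second step is the propagation itself. Having $Pu = \mcy f(y,u) =: F\in I^\mu(\Omega\cap\{t<0\},\Sigma)$ with $u$ already known to be conormal for $t<0$, I invoke Bony's theorem on propagation of conormal regularity along a $C^\infty$ characteristic hypersurface: since $P$ has no caustics relative to $\Sigma$ and $\Sigma$ is non-self-intersecting and smooth throughout $\Omega$, conormality of $u$ and of $Pu$ with respect to $\Sigma$ on $\{t<0\}$ propagates forward, giving $u\in I^\mu(\Omega,\Sigma)$. Concretely this is the statement that the commutators $[P,Z]$ for $Z\in\mathcal{M}$ again lie in the module, so one can commute test vector fields through the (hyperbolic, well-posed) Cauchy problem and run an energy/Gronwall estimate on each $Z^\alpha u$; the characteristic condition is exactly what makes $\mathcal{M}$ closed under bracket with $P$ modulo $\mathcal{M}\cdot\diff^1$. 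This is the part I expect to be the main obstacle to write carefully, because one must keep track of how the order $\mu$ is preserved (no loss) under the commutations and how the $t<0$ data feeds the forward estimate; but since $P$ is strictly hyperbolic and $\Omega$ is bicharacteristically convex, the forward solvability and the needed a priori bounds are standard, and the orders match precisely because $F$ has the same order $\mu$ as $u$.

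The third and final step computes the transport equation for the principal symbol. Write $u$ locally as an oscillatory integral $u(y)=\int e^{i y_1\xi_1} a(y,\xi_1)\,d\xi_1$ with $a$ a classical symbol of the appropriate order whose leading term is the principal symbol on $N^*\Sigma\setminus 0$. Applying $P$ and using that $\Sigma$ is characteristic, the symbol of $Pu$ has vanishing top-order term automatically; the next term is, by the usual computation, $\tfrac1i(\mcl_{H_p} + c)a$ evaluated on $N^*\Sigma$, where $H_p$ is tangent to $N^*\Sigma$ (again by characteristicity) and $c$ is the subprincipal symbol. Since $F=\mcy f(y,u)\in I^\mu(\Omega,\Sigma)$ has order $\mu$, strictly less by one than the order of the term $(\mcl_{H_p}+c)a$ (whose natural order is that of $u$, i.e. one unit above where $Pu$'s subleading term would otherwise sit), that subleading symbol of $Pu$ must vanish, yielding $(\mcl_{H_p}+c)a=0$ along the null bicharacteristics in $N^*\Sigma\setminus 0$. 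Invariance of this transport law is classical (Hörmander), so no extra work is needed beyond noting that the right-hand side $F$ is of strictly lower order and therefore does not contribute to the leading transport equation. $\qed$
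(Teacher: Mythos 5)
Your proposal is correct and follows essentially the same route as the paper: the composition step via the Rauch--Reed/Bony algebra property of $I^{\mu}(\Omega,\Sigma)$ (valid under the stated restriction on $\mu$), Bony's propagation theorem for conormal regularity along a smooth characteristic hypersurface, and the observation that $P(y,D)u\in I^{\mu+1}(\Omega,\Sigma)$ with principal symbol $(\mcl_{H_p}+c)a$ (Theorem 25.2.4 of H\"ormander), which must vanish because $P(y,D)u=f(y,u)$ actually lies in $I^{\mu}(\Omega,\Sigma)$. The paper simply cites these three ingredients rather than sketching the commutator and oscillatory-integral computations, but the argument is the same.
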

\begin{proof}  We know that $u\in I^{\mu}(\Omega,\Sigma)$ because of the work of Bony  \cite{Bony3}. We are interested in the evolution of its symbol.

Since $u \in I^{\mu}(\Omega,\Sigma),$ we know from a result of  Rauch and Reed \cite{RauRee1},  see Proposition \ref{hyper-alg} below,  that for $\mu+\novf<-\ha,$ $I^{\mu}(\Omega,\Sigma)$ is a $C^\infty$ algebra and therefore  $P(y,D)u=f(y,u) \in I^{\mu}(\Omega,\Sigma).$ But on the other hand, since $\Sigma$ is characteristic for $P(y,D),$ it follows from Theorem 25.2.4 of \cite{HormanderV4} that 
$P(y,D)u\in I^{\mu+1}(\Omega,\Sigma)$ and its principal symbol is 
equal to $(\mcl_{H_p} +c)a,$ where $a$ is the principal symbol of $u,$ $\mcl_{H_p}$ is the Lie derivative with respect to the Hamilton vector field $H_p$ and $c$ is the subprincipal symbol of $P(y,D).$   But since in fact $P(y,D)u =f(y,u)\in I^{\mu}(\Omega,\Sigma),$ we conclude that $(\mcl_{H_p} +c)a=0.$ This ends the proof of the Proposition.
\end{proof}

Since $\mcq$ is a $C^\infty$ hypersurface away from $\Gamma,$ Proposition \ref{REG-D1} implies the following result regarding the principal symbol of $u$ on the conormal bundle to $\mcq$ as time evolves,  but away from the hypersurfaces and before caustics eventually form on $\mcq:$
\begin{theorem}\label{triple}  Let $\Omega,$ $P(y,D),$ $t,$ $f(y,u),$ $\Sigma_j\subset  \Omega,$ $1\leq j \leq 3,$ $\Gamma,$  
and $\mcq$ be as defined above.   Let $q\in \Gamma$ and let $W\subset \Omega $ be a neighborhood of $q$  such that $\mcq$ is $\CI$ in $W$ and for any open subset $\wtW \subset W$ such that $\wtW\cap \Sigma_j=\emptyset,$ $j=1,2,3,$ 
$u\in I^{3m-\novf}(\wtW, \mcq),$ $3m<-\ha.$  Suppose for a given $\wtW$ there exists another open subset $U$ which is bicharacterisically convex  and  is contained in the domain of influence of $\wtW$ and $U \cap \Sigma_j=\emptyset,$ $j=1,2,3,$  see Fig.\ref{Fig2}.  Suppose that $\mcq\cap U$ is $C^\infty,$ then   $u \in I^{3m-\novf}(U,\mcq)$    and its principal symbol 
$a\in S^\mu(N^*\mcq, \Omega_{N^*\mcq}^\ha)$ satisfies $(\mcl_{H_p}+c)a=0$ on $N^*\mcq\setminus 0,$ where $\mcl_{H_p}$ is the Lie derivative with respect to $H_p,$ and $c$ is the subprincipal symbol of $P.$
\end{theorem}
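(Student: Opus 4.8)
The plan is to derive Theorem \ref{triple} from Theorem \ref{triple-0} and Proposition \ref{REG-D1} by a finite-step propagation argument along the null bicharacteristic flow, using that $\mcq$ is a $\CI$ characteristic hypersurface of $P$ on the relevant region. First I would observe that $\mcq\setminus\Gamma$ is a $\CI$ characteristic hypersurface for $P$ (as stated in the excerpt near $\Gamma$, and assumed $\CI$ on $W$ and on $U$), so its conormal bundle $N^*\mcq\setminus 0$ is a smooth conic Lagrangian invariant under $H_p$, and the flow $\exp(sH_p)$ maps $N^*\mcq\setminus 0$ to itself. The hypothesis supplies an open bicharacteristically convex set $U$ in the domain of influence of $\wtW$, disjoint from $\Sigma_1\cup\Sigma_2\cup\Sigma_3$, with $\mcq\cap U$ smooth; and on $\wtW$ we already know $u\in I^{3m-\novf}(\wtW,\mcq)$ from Theorem \ref{triple-0}.

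The key steps, in order: (i) Cover the relevant piece of $\mcq$ between $\wtW$ and $U$ by finitely many bicharacteristically convex coordinate patches, each avoiding $\Sigma_1\cup\Sigma_2\cup\Sigma_3$, such that in each patch $\mcq$ is a smooth characteristic hypersurface and the conormal data is already known on the "incoming" face. (ii) In a single such patch, apply Proposition \ref{REG-D1} with $\Sigma=\mcq$, $\mu=3m-\novf$: the hypothesis $3m<-\ha$ gives $\mu+\novf-\ha=3m-\ha<-1$, so the proposition applies verbatim and yields $u\in I^{3m-\novf}$ on that patch together with the transport equation $(\mcl_{H_p}+c)a=0$ for the principal symbol $a$. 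Here I use that away from the $\Sigma_j$ the nonlinear source $\mcy(y)f(y,u)$ interacts only with the single conormal wave $\mcq$, so that the Rauch–Reed algebra property (Proposition \ref{hyper-alg}) applies to the single conormal class $I^{\mu}(\cdot,\mcq)$ exactly as in the proof of Proposition \ref{REG-D1}. (iii) Propagate through the finite chain of patches: the output of one patch (conormality plus the symbol, matched at the overlap) serves as the input hypothesis for the next, using finite speed of propagation and bicharacteristic convexity to guarantee that $u$ restricted to each patch is determined by data in the previously handled region union $\wtW$. After finitely many steps one reaches $U$, obtaining $u\in I^{3m-\novf}(U,\mcq)$ and $(\mcl_{H_p}+c)a=0$ on $N^*\mcq\setminus 0$ over $U$.

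Two points need care. The first is that $\mcq$ need only be smooth along the specific family of rays in question, not globally — caustics may form elsewhere on $\mcq$ — so the chain of patches must be chosen to follow the $H_p$-flow emanating from $N^*\wtW\cap N^*\mcq$ and stay inside $\{\mcq \text{ smooth}\}$; the hypothesis that $U$ lies in the domain of influence of $\wtW$ with $\mcq\cap U$ smooth is exactly what makes such a chain exist, but one should state the covering carefully and invoke bicharacteristic convexity of $\Omega$ to keep each intermediate piece bicharacteristically convex. The second, and the main obstacle, is justifying step (iii) rigorously: one must show that the local conormality and the transport of the symbol actually glue along the flow, i.e.\ that knowing $u\in I^{\mu}(\cdot,\mcq)$ with a given principal symbol on an open "past" piece of a bicharacteristically convex set forces the same on the full set — this is precisely Bony's propagation of conormal regularity along a $\CI$ characteristic surface combined with uniqueness for the linear transport equation $(\mcl_{H_p}+c)a=0$ with prescribed Cauchy data along a hypersurface transverse to $H_p$ in $N^*\mcq$. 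Once those are in hand, the conclusion follows; the transport equation itself is, as the excerpt notes, immediate from Hörmander's Theorem 25.2.4 once the conormality is established, so no new symbol calculus is required beyond what Proposition \ref{REG-D1} already packages.
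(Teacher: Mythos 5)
Your proposal is correct and takes essentially the same route as the paper: the paper offers no separate proof of Theorem \ref{triple}, presenting it as an immediate consequence of Proposition \ref{REG-D1} applied to the smooth characteristic hypersurface $\mcq$ away from $\Gamma$ and the $\Sigma_j$, with the conormality on $\wtW$ supplied by Theorem \ref{triple-0}. Your check of the order condition ($\mu+\novf-\ha=3m-\ha<-1$, i.e.\ $3m<-\ha$) and the finite chain of bicharacteristically convex patches carrying the data from $\wtW$ to $U$ are exactly the details the paper leaves implicit.
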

\begin{figure}
\centering
\psscalebox{.5 .5} 
{
\begin{pspicture}(0,-2.6747327)(9.0111,2.6747327)
\psellipse[linecolor=black, linewidth=0.04, dimen=outer](4.2147365,2.4530485)(4.2,0.2)
\psline[linecolor=black, linewidth=0.04](0.014736328,2.4530485)(0.014736328,2.4530485)(4.4147363,-2.3469515)(8.414737,2.4530485)(8.414737,2.4530485)
\psellipse[linecolor=black, linewidth=0.04, dimen=outer](4.3147364,0.2530484)(2.3,0.2)
\psline[linecolor=black, linewidth=0.04, linestyle=dotted, dotsep=0.10583334cm](3.4147363,2.6530485)(4.4147363,-2.7469516)
\psline[linecolor=black, linewidth=0.05](5.014736,2.2530484)(4.4147363,-2.3469515)
\rput[bl](3.0147364,-0.3469516){$\wtW$}
\rput[bl](5.4147363,0.6530484){$U$}
\psline[linecolor=black, linewidth=0.05](4.923827,1.1439575)(4.014736,-0.032057982)(4.5601907,-1.2080735)(5.4147363,0.053048402)(4.923827,1.1439575)
\rput{-0.790553}(0.0058165262,0.06463736){\psellipse[linecolor=black, linewidth=0.05, dimen=outer](4.6874638,-0.3892307)(1.0545454,0.8988118)}
\psdots[linecolor=black, dotsize=0.04](4.742009,0.07123022)
\psdots[linecolor=black, dotsize=0.04](4.742009,0.07123022)
\psdots[linecolor=black, dotsize=0.04](4.742009,0.07123022)
\psdots[linecolor=black, dotsize=0.08](4.742009,0.07123022)
\psdots[linecolor=black, dotsize=0.08](4.742009,0.07123022)
\psdots[linecolor=black, dotsize=0.08](4.742009,0.07123022)
\psdots[linecolor=black, dotsize=0.1](4.742009,0.07123022)
\psdots[linecolor=black, dotsize=0.1](4.742009,0.07123022)
\psdots[linecolor=black, dotsize=0.1](4.742009,0.07123022)
\psdots[linecolor=black, dotsize=0.1](4.742009,0.07123022)
\rput[bl](7.6511,1.1621393){$\mcq$}
\end{pspicture}
}
\caption{ Propagation of conormality along $\mcq$ from  $\wtW$ and $U$ where $\mcq$ remains smooth and $\wtW$ and $U$ do not intersect the hypersurfaces.}
\label{Fig2}
\end{figure}
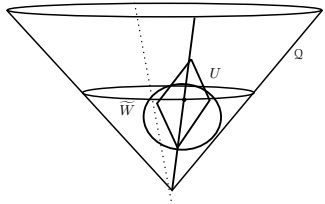

However, eventually $\mcq$ may develop caustics  and Theorem \ref{triple}  is no longer valid.  This can happen for instance if $n=3$ and $P(y,D)=D_t^2-\Delta_g,$ where $\Delta_g$ is the Laplacian with respect to a Riemannian metric $g.$ If the metric $g$ has conjugate points, geodesics emanating from one point (the tip of the cone) intersect at another point, and this causes a singularity on the cone $\mcq,$ see Fig.\ref{Fig3}. There are results on the propagation of conormal singularities for smilinear wave equations in the presence of caustics. The case of the cusp caustic was studied by Melrose \cite{Melrose1,Melrose2} and Beals \cite{Beals2}, the swallowtail caustic was studied by Delort \cite{Delort1},  Joshi and S\'a Barreto \cite{JosSab}, Lebeau \cite{Lebeau1} and S\'a Barreto \cite{SaB1}. Beals \cite{Beals3} and Melrose and S\'a Barreto \cite{MelSab} analyzed the case of the interaction of a cusp and a plane.

 The remainder of the paper is divided in three sections.  In Section \ref{sketch} we outline the main ideas of the proof of Theorem \ref{triple-0}.  In Section \ref{products} we introduce the necessary spaces distributions and we recall the results of Melrose and Ritter \cite{MelRit},  Bony \cite{Bony5,Bony6} and S\'a Barreto \cite{SaB,SaB2} about the  interactions of conormal waves and in section \ref{gen-sing} we prove Theorem \ref{triple-0}.  
 
 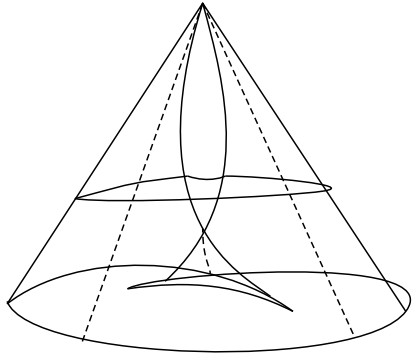
\begin{figure}[h!]
\centering
\psscalebox{.5 .5} 
{
\begin{pspicture}(0,-4.678309)(10.796289,4.678309)
\psline[linecolor=black, linewidth=0.04](10.618725,-3.55828)(5.218725,4.6417203)(0.018725282,-3.35828)
\psbezier[linecolor=black, linewidth=0.04](0.018725282,-3.291613)(0.6187253,-4.891613)(9.418725,-5.15828)(10.618725,-3.5582798767089843)(11.818726,-1.9582798)(3.8187253,-2.55828)(3.2187252,-2.9582798)
\psbezier[linecolor=black, linewidth=0.04](3.2187252,-2.9582798)(4.87662,-2.7182798)(6.197673,-2.83828)(7.6187253,-3.5582798767089843)
\psbezier[linecolor=black, linewidth=0.04](0.018725282,-3.35828)(1.8187252,-1.9582798)(5.4187255,-1.9582798)(7.6187253,-3.5582798767089843)
\psbezier[linecolor=black, linewidth=0.04](5.218725,4.6417203)(3.4187253,-1.1582799)(6.218725,-2.55828)(7.0187254,-3.1582798767089844)
\psbezier[linecolor=black, linewidth=0.04](5.218725,4.6417203)(5.818725,2.44172)(6.6187253,-0.5582799)(4.218725,-2.7582798767089844)
\psbezier[linecolor=black, linewidth=0.04, linestyle=dashed, dash=0.17638889cm 0.10583334cm](5.218725,-1.3582798)(5.218725,-2.15828)(5.4187255,-2.35828)(5.4187255,-2.5582798767089843)
\psline[linecolor=black, linewidth=0.04, linestyle=dashed, dash=0.17638889cm 0.10583334cm](5.218725,4.6417203)(2.0187254,-4.3582797)
\psline[linecolor=black, linewidth=0.04, linestyle=dashed, dash=0.17638889cm 0.10583334cm](5.218725,4.6417203)(9.218725,-4.15828)
\psbezier[linecolor=black, linewidth=0.04](1.8187252,-0.5582799)(2.807097,-0.71033704)(7.429789,-0.50662035)(8.418725,-0.35827987670898437)(9.407661,-0.20993942)(6.818725,0.041720122)(5.818725,0.041720122)
\psbezier[linecolor=black, linewidth=0.04](1.8187252,-0.5582799)(3.6069605,-0.078279875)(3.2187252,-0.15827988)(4.818725,0.041720123291015626)
\psbezier[linecolor=black, linewidth=0.04](4.818725,0.041720122)(5.4187255,-0.15827988)(5.818725,0.041720122)(5.818725,0.041720123291015626)
\end{pspicture}
}
\caption{ A swallowtail singularity formed in the light cone emanating from a point in two space dimensions.  This can be due  to the existence of conjugate points of the geodesic flow in the case $P=D_t^2-\Delta_g,$ $g$ a Riemannian metric in $\mr^{2}.$  The solution to \eqref{Weq} would remian conormal to $\mcq$ away from the caustic, but other singularities could be generated by the caustic. This figure resembles one after equation 5.1.24 in Duistarmaat's book \cite{Duist}.}
\label{Fig3}
\end{figure}

\section{Outline of the Proof of Theorem \ref{triple-0}}\label{sketch}

  We explain the main ideas of the proof in a simplified version of the theorem. Here we shall suppose that $\Omega$ is small enough that there exist local coordinates 
   \begin{gather*}
   \begin{gathered}
   y=(y',y''), y'=(y_1,y_2,y_3)  \text{ such that } \Sigma_j=\{y_j=0\}, \ j=1,2,3, \\
    \end{gathered} 
   \end{gather*}
 valid in $\Omega.$    We  denote $F(y,u)=\mcy(y) f(y,u)$ and analyze the singularities of the solution of 
\begin{gather}
\begin{gathered}
P(y,D)u= F(y,u),  \\
u=v_1+v_2+v_3 \text{ for } t<-1.
\end{gathered}\label{Weq1}
\end{gather}
  
  If  $s>\novt,$ it is well known that $H_{\loc}^s(\Omega)$ is a $C^\infty$ algebra -- it is closed under composition with $C^\infty$ functions.   If $v\in H_{\loc}^s(\Omega),$ equation \eqref{Weq}  can be solved by  using a contraction mapping argument to show that, for small enough $\Omega,$ there exists a unique
 $u\in H_{\loc}^s(\Omega)$ that satisfies
\begin{gather}
u= v + E_+( F(y,u)), \label{it1}
\end{gather}
where $E_+$ is the forward fundamental solution of $P$ and
\begin{gather*}
E_+: H_{\loc}^s (\Omega) \longrightarrow H^{s+1}_{\loc}(\Omega).
\end{gather*}

 Now to analyze the propagation of singularities, we assume that the solution exists and proceed as in \cite{Beals4} and \cite{SaWang1}. We take advantage of the fact that $E_+( F(y,u))$ is smoother than $u,$ we  iterate this formula and obtain
\begin{gather*}
u= v+ E_+[ F(y, v+ E_+(F(y,u)))].
\end{gather*}

We  shall appeal to the work of Rauch and Reed \cite{RauRee1} and Piriou \cite{Pir} which show that that if $v_j \in I^{\msi}(\Omega,\Sigma_j)$ with  $m<-1,$  and $\Sigma_j=\{y_j=0\},$ then $v_j= \nu_j+\mce_j,$ where $\nu_j= y_j^{k} w_j,$  $k=k(m)$ is the non-negative integer such that $-m-2 \leq k(m) < -m-1,$ $\mce_j\in C^\infty$ and $w_j\in I^{\msi+k(m)}(\Omega,\Sigma_j).$ We then write
\begin{gather}
\begin{gathered}
v=v_1+v_2+v_3=\nu+\mce, \;\ \nu= \nu_1+\nu_2+\nu_3, \;\ \mce\in C^\infty, \\
\mcw= \mce+ E_+(F(y,u)), \text{ and so } 
u= \nu+\mce + E_+[ F(y, \nu(y)+ \mcw)]. 
\end{gathered}\label{defW}
\end{gather}
Since $\nu=0$ at $\Gamma,$ $\mcw=u$ at $\Gamma.$ We then expand $F(y, \nu(y)+\mcw)$ in Taylor series in $\nu$ centered  at $\mcw:$
\begin{gather*}
F(y, \nu(y)+ \mcw)= F(y,\mcw)+ (\p_u F)(y,\mcw) \nu + \ha (\p_u^2 F)(y,\mcw) \nu^2 +  \frac{1}{6} (\p_u^3 F)(y,\mcw) \nu^3 + \\
\frac{\nu^4}{6} \int_0^1 (\p_u^4 F)(y, t\mcw + (1-t)\nu)(1-t)^3 \, dt.
\end{gather*}
 We will work introduce a variation of  the spaces introduced by Beals \cite{Beals4},  see Definition \ref{BSDEF}. These spaces will be used to filter singularities and to show that:\\
{\bf Claim 1:}   The term 
\begin{gather*}
\mcr(y)=F(y,\mcw)+ (\p_u F)(y,\mcw) u + \ha (\p_u^2 F)(y,\mcw) \nu^2 + 
\frac{\nu^4}{6} \int_0^1 (\p_u^4 F)(y, t\mcw + (1-t)\nu)(1-t)^3 \, dt \\
\text{ is smoother than } \frac{1}{6} (\p_u^3 F)(y,\mcw) \nu^3.
 \end{gather*}
{\bf Claim 2:}  We will show that,  
 \begin{gather*}
 \frac{1}{6} (\p_u^3 F)(y,\mcw) \nu^3=   (\p_u^3 F)(y,\mcw) (\nu_1\nu_2\nu_3) + \text{{smoother terms.}  }
 \end{gather*}

However,  the term  $(\p_u^3 F)(y,\mcw) (\nu_1\nu_2\nu_3)$
 still does not say vey much about the singularities of $u$ because of course $\mcw$ depends on $u.$\\
{\bf Claim 3:}  We will show that,
\begin{gather*}
(\p_u^3 F)(y,\mcw) (\nu_1\nu_2\nu_3)= (\p_u^3 F)(0,0,0,y'',\mcw(0,0,0,y'')) (\nu_1\nu_2\nu_3) + \text{ smoother terms.}
\end{gather*}
But according to \eqref{defW}, $\mcw(0,0,0,y'')=u(0,0,0,y'').$

 \section{Spaces of Distributions}\label{products}

 We first recall the definition of the class of conormal distributions to a $\CI$ closed submanifold $\mcm \subset \Omega$ of codimension $k.$  Let $\mcv_\mcm$ denote the Lie algebra of $\CI$ vector fields tangent to $\Sigma.$

 As in  H\"ormander \cite{HormanderV3},  we say that  $u\in I^{m}(\Omega, \mcm),$ $m\in \mr,$ and $u$ is a conormal  distribution to $\mcm$ of order $m,$  if for any $N\in \mn,$ 
 \begin{gather*}
V_1 V_2 \ldots V_N u \in {}^\infty H_{-m-\frac{n}{4}}^{\loc} (\Omega), \;\ V_j \in \mcv_\mcm.
\end{gather*}
  The definition of ${}^\infty H_{-m-\frac{n}{4}}^{\loc} (\Omega)$ can be found in Appendix B of \cite{HormanderV3}.   It follows from the definition of Besov spaces that
\begin{gather}
  I^{m}(\Omega, \mcm) \subset I^{m'}(\Omega, \mcm), \;\ m\leq m'.\label{inc-conormal}
\end{gather}

  In general, if $\mcw$ is a Lie algebra and $\CI$ module of $\CI$ vector fields, the space of conormal distributions with respect to 
$\mcw$ is defined to be 
\begin{gather}
I H_{\loc}^{s}(\Omega,\mcw)=\{u \in H_{\loc}^s(\Omega): V_1 V_2 \ldots V_N u \in H_{\loc}^s(\Omega), \; s\in \mr,  V_j \in \mcw, \ N \in \mn\}.
\label{GEN-conormal}
\end{gather}
This can also be defined in terms of  Besov spaces, instead of Sobolev ones. We will consider spaces of conormal distributions related to the interaction of waves.

 Let $\Sigma_j\subset \Omega$ $j=1,2,3$ be closed $C^\infty$ hypersurfaces which intersect transversally at $\Sigma_i\cap \Sigma_j=\Gamma_{ij}$ $i\not=j,$ and  at $\Gamma=\Sigma_1\cap \Sigma_2\cap \Sigma_3.$ Let $\mcq$ be defined as above. The following Lie algebras and  $\CI$ modules of $C^\infty$ vector fields will play an important role in this paper:
\begin{gather}
\begin{gathered}
\mcw_j,  \text{ denotes the } \CI \text{ vector fields tangent to } \Sigma_j, \; j=1,2,3, \\
\mcw_{jk}  \text{ denotes the } \CI  \text{ vector fields tangent to } \Sigma_j \cup \Sigma_k, \; j=1,2,3, \; j\not=k, \\
\mcw_{123} \text{  denotes the } \CI \text{ vector fields tangent to } \Sigma_1 \cup \Sigma_2 \cup \Sigma_3, \\
\mcw_{123,\mcq} \text{ denotes the } \CI \text{ vector fields tangent to } \Sigma_1\cup \Sigma_2 \cup \Sigma_3\cup \mcq.
\end{gathered}\label{123Q}
\end{gather}
These Lie algebras are locally finitely generated. In local coordinates $y=(y',y''),$ $y'=(y_1,y_2,y_3),$ where $\Sigma_j=\{y_j=0\},$ $j=1,2,3,$ we have
\begin{gather}
\begin{gathered}
\mcw_j= \CI-\text{ span of } \{y_j\p_{y_j}, \p_{y_k}, \ k\not=j\}, \\
\mcw_{jk}= C^\infty(\Omega)-\text{span of } \{ y_j\p_{y_j}, y_k\p_{y_k},  \p_{y_m}, \;  m\not= j,k, \}, \\
\mcw_{123}= C^\infty(\Omega)-\text{span of } \{ y_1\p_{y_1}, y_2\p_{y_2},  y_3\p_{y_3}, \p_{y_m}, \; \; m\geq 4\}.
\end{gathered}\label{span}
\end{gather}
$\mcw_{123,\mcq}$ is also finitely generated, see \cite{MelRit}.
  
  The class of symbols  $S^{r}(\mr^n\times \mr^k)$  is defined as the space of $\CI(\mr^n\times \mr^k)$  functions that satisfy
\begin{gather}
|D_{y}^\alpha D_{\eta'}^\beta b(y,\eta')| \leq C_{\alpha,\beta}(1+|\eta'|)^{r-|\beta|}, \;\ \alpha\in \mn^{n},  \beta\in \mn^k. \label{symb-def}
\end{gather}
These spaces satisfy
\begin{gather}
S^r(\mr^{n-k}\times \mr^{k}) \subset S^{r'}(\mr^{n-k}\times \mr^{k}), \;\ r\leq r'. \label{symb-incl}
\end{gather}

   Theorem 18.2.8 of \cite{HormanderV3} says that $u\in I^{m}(\Omega, \mcm)$ if and only if  $u \in C^\infty(\mr^{n+1}\setminus \mcm)$ and in a neighborhood of any point $p\in \mcm,$ in local coordinates where 
\begin{gather}
y=(y', y''),\  y'=(y_1, y_2, \ldots, y_k), \text{ such that } \mcm=\{y_1=y_2=\ldots= y_k=0\}, \label{defyp}
\end{gather}
$u$ is given by
\begin{gather}
u(y)= \int_{\mr^k} e^{i  y'\cdot \eta' } a(y,\eta') \; d\eta', \;\ a\in S^{m+\frac{n-2k}{4}}( \mr_y^n \times \mr^{k}_{\eta'}). \label{con1}
\end{gather}

If one multiplies a conormal distribution  $u\in I^m(\Omega,\mcm)$ by a $\CI$ function $f$ which vanishes on $\mcm,$ $f u\in I^{m'}(\Omega,\mcm),$ with $m'<m.$ This is made precise in  the following
\begin{prop}\label{xtoalpha}(Proposition 18.2.3 of \cite{HormanderV3}). Let $\mcm\subset \Omega$ be a $C^\infty$ submanifold of codimension $k.$ Let $u \in I^{m}(\Omega,\mcm)$ and let $y=(y',y''),$  be local coordinates as in \eqref{defyp}. If  $\alpha\in \mn^k,$  then ${y'}^\alpha u \in I^{m-|\alpha|}(\Omega,\mcm).$
\end{prop}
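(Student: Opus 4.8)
The plan is to reduce to the oscillatory-integral description of conormal distributions, where multiplication by a coordinate $y_j$ transverse to $\mcm$ turns into $\eta_j$-differentiation of the symbol, lowering its order by exactly one; an induction on $|\alpha|$ then finishes the argument.

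First I would observe that the assertion is local near $\mcm$: by Theorem 18.2.8 of \cite{HormanderV3} one has $u\in\CI(\Omega\setminus\mcm)$, and since ${y'}^\alpha$ is smooth, the product ${y'}^\alpha u$ is automatically smooth on $\Omega\setminus\mcm$; so it suffices to check the conormal regularity in a neighborhood of an arbitrary $p\in\mcm$, in local coordinates \eqref{defyp}. There, again by Theorem 18.2.8 of \cite{HormanderV3} --- recalled as \eqref{con1} --- we may write
\[
u(y)=\int_{\mr^k} e^{iy'\cdot\eta'}\,a(y,\eta')\,d\eta',\qquad a\in S^{m+\frac{n-2k}{4}}\left(\mr^n_y\times\mr^k_{\eta'}\right).
\]
Next I would treat a single factor. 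For $1\le j\le k$, using $y_j e^{iy'\cdot\eta'}=-i\,\partial_{\eta_j}e^{iy'\cdot\eta'}$ and integrating by parts in $\eta_j$, one gets
\[
y_j\,u(y)=i\int_{\mr^k} e^{iy'\cdot\eta'}\,(\partial_{\eta_j}a)(y,\eta')\,d\eta'.
\]
By the symbol estimates \eqref{symb-def}, $\partial_{\eta_j}a\in S^{m+\frac{n-2k}{4}-1}=S^{(m-1)+\frac{n-2k}{4}}$, so the converse direction of Theorem 18.2.8 of \cite{HormanderV3} gives $y_j u\in I^{m-1}(\Omega,\mcm)$ near $p$, hence on all of $\Omega$. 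Iterating $|\alpha|$ times, writing ${y'}^\alpha=y_{i_1}\cdots y_{i_{|\alpha|}}$ with each $i_\ell\le k$ and dropping the symbol order by one at each step, yields ${y'}^\alpha u\in I^{m-|\alpha|}(\Omega,\mcm)$, which is consistent with \eqref{inc-conormal} since $m-|\alpha|\le m$.

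There is no deep obstacle here; the two places that require a little care are: (i) the integration by parts above is for an oscillatory integral whose symbol need not decay at infinity, so it must be justified by the standard regularization --- insert $\chi(\varepsilon\eta')$ with $\chi\in\Coi(\mr^k)$, $\chi(0)=1$, carry out the now-licit integration by parts, and let $\varepsilon\downarrow 0$, the boundary contributions vanishing in the limit; and (ii) the gain of a full order genuinely uses the symbol picture --- working directly from the vector-field definition one only sees that multiplication by the smooth function $y_j$ preserves each ${}^\infty H$-norm of $V_1\cdots V_N u$, which would give merely $I^{m}$, whereas the correspondence $y_j\leftrightarrow\partial_{\eta_j}$ on the symbol side is what produces the sharp $I^{m-1}$.
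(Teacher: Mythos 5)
Your argument is correct and is exactly the standard one: the paper itself gives no proof here, deferring to Proposition 18.2.3 of H\"ormander, whose proof is precisely the reduction to the oscillatory-integral representation and the $y_j\leftrightarrow\partial_{\eta_j}$ duality you use. Your remarks on regularizing the integration by parts and on why the vector-field definition alone does not see the gain of an order are both apt.
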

Therefore, if $u\in I^m(\Omega,\mcm)$  satisfies \eqref{con1}, and $y=(y',y''),$ satisfy \eqref{defyp}, its Taylor expansion about $\{y'=0\}$ satisfies
\begin{gather*}
a(y',y'',\eta')- \sum_{|\alpha|\leq k} \frac{1}{\alpha!} {y'}^\alpha \p_{y'}^\alpha a(0, y'',\eta')= O(|y'|^{k+1}),
\end{gather*}
and therefore, by Borel summation formula, 
\begin{gather}
\begin{gathered}
u(y)= \int_{\mr^k} e^{i y'\cdot \eta'} a(0,y'',\eta') d\eta' +  \int_{\mr^k} e^{i y'\cdot \eta'} b(0,y'',\eta') d\eta'+ \mce, \text{ where } \mce \in \CI, \text{ and } \\
b(0,y'',\eta')\sim \sum_{|\alpha|\geq 1} \frac{i^{|\alpha|}}{\alpha!} \p_{y'}^\alpha \p_{\eta'}^\alpha a(0,y'',\eta')\in S^{m-1+\frac{n-2k}{4}}( \mr_y^n \times \mr^{k}_{\eta'}).
\end{gathered}\label{left-red}
\end{gather}
The principal symbol of $u$ is defined to be
\begin{gather*}
[a(0,y'',\eta')] \in S^{m+\frac{n-2k}{4}}(\mr^{n-k} \times \mr^k)/S^{m-\frac{n-2k}{4}-1}(\mr^{n-k} \times \mr^k),
\end{gather*}
which is the  equivalence class of $a(0,y'',\eta')$ in this quotient. 

 However, this definition is not coordinate invariant. Following \cite{HormanderV3},  this issue is resolved if one thinks of conormal distributions as distributions acting on half-densities $\Gamma_\Omega^\ha$ and their principal symbol as an element of the half-density bundle $\Gamma^\ha_{N^*\mcm}$ on the conormal bundle $N^*(\mcm).$ In local coordinates $(y',y'',\eta',\eta'')$ this is given by
\begin{gather}
a(0,y'',\eta')|dy''|^\ha|d\eta'|^\ha \in S^{m+\frac{n}{4}}( N^*\mcm,\Gamma_{N^*\mcm}^\ha). \label{pr-symb}
\end{gather}
 One needs to realize that $|d\eta'|^\ha$ is homogeneous of  degree $\frac{k}{2},$ and so this is a symbol of the order stated above. 

We conclude that the principal symbol map is the isomorphism
\begin{gather*}
S^{m+\frac{n}{4}}(N^*\mcm,\Gamma_{N^*\mcm}^\ha)/ S^{m+\frac{n}{4}-1}(N^*\mcm,\Gamma_{N^*\mcm}^\ha) \longrightarrow I^{m}(\Omega, \mcm; \Gamma_{\Omega}^\ha)/ I^{m-1}(\Omega, \mcm; \Gamma_{\Omega}^\ha) \\
 [a] \longmapsto [u].
 \end{gather*}

\subsection{Further Properties of Conormal Distributions}   First, we recall a result due to Rauch and Reed, Proposition 2.1 of \cite{RauRee1} which is very important in the study of nonlinear equations: 
  \begin{prop}\label{hyper-alg} Let $\Sigma\subset \Omega$ be a closed $C^\infty$ hypersurface, and let $u_j,$ $j=1,2,\ldots,N$ be a family of distributions in $I^{\msi}(\Omega,\Sigma),$  with $m<-1.$ If   $f(y,x_1,\ldots, x_n)\in C^\infty,$  then $f(y,u_1,\ldots, u_N) \in   I^{\msi}(\Omega,\mcm).$
  \end{prop}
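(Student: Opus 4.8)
The plan is to reduce to a local model and then invoke the standard fact that Besov-type spaces with positive smoothness index, intersected with $L^\infty$, form a ring that is stable under composition with $\CI$ functions (Moser-type estimates). The assertion is local and diffeomorphism invariant, so I would first pass to coordinates $y=(y_1,y'')$ in which $\Sigma=\{y_1=0\}$; then $\mcv_\Sigma$ is generated over $\CI$ by $y_1\partial_{y_1}$ and $\partial_{y_k''}$, $k=1,\dots,n-1$. By the definition recalled above, to prove $F:=f(y,u_1,\dots,u_N)\in I^{\msi}(\Omega,\Sigma)$ it suffices to check that (i) $F\in\CI(\Omega\setminus\Sigma)$, which is automatic since each $u_j$ is $\CI$ off $\Sigma$, and (ii) $V_1\cdots V_M F\in{}^\infty H_{s}^{\loc}(\Omega)$ for all $V_i\in\mcv_\Sigma$ and all $M\in\mn$, where $s=-(\msi)-\novf=-m-\ha$. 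The hypothesis $m<-1$ is precisely what makes $s>\ha>0$.

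I would then record two preliminary facts. First, by H\"ormander's oscillatory-integral representation of conormal distributions, each $u_j$ equals $\int e^{iy_1\eta_1}a_j(y,\eta_1)\,d\eta_1$ with $a_j\in S^{m}(\mr^n\times\mr)$; since $m<-1$ this integral is absolutely convergent, and writing $y_1 e^{iy_1\eta_1}=D_{\eta_1}e^{iy_1\eta_1}$ and integrating by parts shows that applying any generator of $\mcv_\Sigma$, hence any word in $\mcv_\Sigma$, again yields such an integral with amplitude in $S^{m}$. Consequently $u_j$ together with all of its iterated $\mcv_\Sigma$-derivatives lie in $C^0\cap L^\infty_{\loc}$, in fact in ${}^\infty H^{\loc}_{s}\cap L^\infty_{\loc}$. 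Second, the Moser-type estimate: for $s>0$ the space ${}^\infty H^{\loc}_{s}\cap L^\infty_{\loc}$ is closed under products, and $g(w_1,\dots,w_p)\in{}^\infty H^{\loc}_{s}\cap L^\infty_{\loc}$ whenever $g\in\CI$ and $w_1,\dots,w_p\in{}^\infty H^{\loc}_{s}\cap L^\infty_{\loc}$; this follows from a Littlewood--Paley/paraproduct decomposition together with the product bound $\norm[s]{vw}\lesssim\norm[L^\infty]{v}\,\norm[s]{w}+\norm[L^\infty]{w}\,\norm[s]{v}$ valid for $s>0$.

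With these in place, (ii) is a bookkeeping argument. Each $V\in\mcv_\Sigma$ is a derivation with $V[y_i]\in\CI$, so by the chain rule $V_1\cdots V_M F$ is a finite sum of terms of the form
\[
\big(\partial_y^{\beta_0}\partial_x^{\beta}f\big)(y,u_1,\dots,u_N)\cdot\prod_{\ell}\big(V_{i_{\ell,1}}\cdots V_{i_{\ell,p_\ell}}u_{j_\ell}\big),
\]
times a further $\CI$ factor coming from the $V[y_i]$. The first factor is $g(y,u_1,\dots,u_N)$ for the smooth function $g=\partial_y^{\beta_0}\partial_x^{\beta}f$ (treating the coordinates $y$ as additional smooth arguments), hence lies in ${}^\infty H^{\loc}_{s}\cap L^\infty_{\loc}$ by the composition estimate; each remaining factor lies there by the first preliminary fact; and the $\CI$ factor is harmless. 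By the ring property the product lies in ${}^\infty H^{\loc}_{s}$, and summing the finitely many terms gives (ii). The case $M=0$ is the plain statement $F=f(y,u_1,\dots,u_N)\in{}^\infty H^{\loc}_{s}$, again by the composition estimate, and together with (i) this completes the proof that $F\in I^{\msi}(\Omega,\Sigma)$.

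The only genuinely analytic ingredient is the Moser-type multiplication-and-composition estimate in ${}^\infty H^{\loc}_{s}$ for $s>0$, so that is the step I expect to require the most care to write out; the rest is the chain rule, the symbol-calculus observation that $\mcv_\Sigma$-derivatives preserve the amplitude order $m<-1$ (so all the factors stay bounded), and combinatorics. A minor point worth checking is that the vanishing at $\Sigma$ of the coefficient of $y_1\partial_{y_1}$ causes no difficulty: $y_1\partial_{y_1}$ is a genuine $\CI$ vector field, and the chain-rule expansion above uses only $V[y_i]\in\CI$, so no subtlety arises there.
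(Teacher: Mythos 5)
The paper offers no proof of this proposition: it is recalled verbatim as Proposition 2.1 of Rauch and Reed \cite{RauRee1} and used as a black box, so there is no in-text argument to compare yours against. Your proof is correct and is essentially the standard route to this result.

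Two brief remarks on the substance. First, your reduction is the right one: with the paper's normalization, $u\in I^{\msi}(\Omega,\Sigma)$ means precisely that all iterated $\mcv_\Sigma$-derivatives lie in ${}^\infty H^{\loc}_{s}$ with $s=-m-\ha$, and the two preliminary facts you isolate --- (a) that a word in $\mcv_\Sigma$ applied to $u_j$ is again an oscillatory integral with amplitude in $S^{m}$, hence lies in ${}^\infty H^{\loc}_{s}\cap L^\infty_{\loc}$ because $m<-1$ makes the amplitude absolutely integrable in $\eta_1$, and (b) the Moser/paraproduct estimates making ${}^\infty H^{\loc}_{s}\cap L^\infty_{\loc}$ a ring stable under $\CI$ composition for $s>0$ --- are exactly the two ingredients needed; the Leibniz/chain-rule bookkeeping is then routine. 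You correctly identify (b) as the only genuinely analytic step; it is classical (Meyer/Bony), and the only care required in quoting it is that the composition statement is usually stated for $g$ with $g(\cdot,0)=0$, which is harmless here since the spaces are local and contain $\CI(\Omega)$. Second, when integrating by parts to handle $y_1\p_{y_1}$ you should allow the amplitude $a_j(y,\eta_1)$ to depend on $y_1$ as well (the general representation does), which produces an extra harmless term $y_1\p_{y_1}a_j\in S^{m}$; this does not affect the conclusion. Note finally that the statement as printed in the paper has typos ($f(y,x_1,\ldots,x_n)$ for $f(y,x_1,\ldots,x_N)$ and $\mcm$ for $\Sigma$); your reading of the intended statement is the correct one.
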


Next we recall properties of conormal distributions established by Rauch and Reed \cite{RauRee1} and Piriou \cite{Pir}.  
Let $u\in I^{\msi}(\Omega,\Sigma),$  $m<-1$ where $\Sigma$ is a $C^\infty$ closed hypersurface on $\Omega$ and  in local coordinates \eqref{defyp}, $\Sigma=\{y_1=0\}.$ Then 
 \begin{gather*}
u(y)= \frac{1}{2\pi} \int_\mr e^{i y_1 \eta_1} a(y'',\eta_1) d\eta_1+ \mce, \; a\in S^m(\mr^{n-1}\times \mr ), \;\ \mce\in C^\infty.
\end{gather*}
One can show that if  $m<-1,$  and $k(m)$ is the  non-negative integer such that $-m-2\leq k(m) < -m-1,$ then
by modifying the symbol of $u$ on a compact set in $\eta_1$ we have
\begin{gather}
\begin{gathered}
u(y) =  y_1^k v_k(y) + \mce, \text{ such that } \ \mce\in C^\infty,  \text{ and } \\
v_k(y)=\int_{\mr} e^{i y_1\eta_1} b_k(y'',\eta_1) d\eta_1,  \; b_k\in S^{\msi+k},  \ 0\leq k\leq k(m). 
\end{gathered}\label{van-u}
\end{gather}
see for example \cite{Pir}, or \cite{SaWang1} details. We then define, as in \cite{Pir,RauRee1}, 
\begin{definition} Let  $\Sigma \subset \Omega$ be a $C^\infty$ closed hypersurface. For $m<-1,$ the space $\idm(\Omega,\Sigma)$ consisting of elements $u\in I^{\msi}(\Omega,\mcm)$ which  in local coordinates where $\Sigma=\{y_1=0\},$  can be written as $u= y_1^k v_k,$ with $v_k\in I^{\msi-k}(\Omega,\mcm),$  $k\leq k(m).$ As above, $k(m)$ is the only positive integer in the interval $[-m-2, -m-1).$  
\end{definition}

And one can prove, see \cite{Pir,RauRee1,SaWang1}
\begin{prop}\label{decomp-01}  Let $\Sigma \subset \Omega$ be a $C^\infty$ closed hypersurface, and let $u\in I^{\msi}(\Omega,\mcm),$ with $m<-1,$ then
\begin{gather}
u(y)=v(y) +\mce, \;\ v \in \idm(\Omega,\Sigma), \;\ \mce \in C^\infty. \label{van-u1}
\end{gather}
\end{prop}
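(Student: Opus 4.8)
The plan is to deduce the statement from the local normal form \eqref{van-u} (due to Piriou \cite{Pir}, with the computation written out in \cite{SaWang1}) by a partition of unity, the only point needing genuine care being the verification that the globally assembled distribution $v$ lies in the \emph{intrinsically} defined class $\idm(\Omega,\Sigma)$, i.e.\ that the factorization $u=y_1^{k}v_k$ persists in every coordinate system adapted to $\Sigma$, not merely in one.

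First I would take a locally finite cover of $\Omega$ consisting of charts $\Omega_\beta$ with $\overline{\Omega_\beta}\cap\Sigma=\emptyset$ and charts $\Omega_\alpha$ carrying coordinates $y^{(\alpha)}=(y^{(\alpha)}_1,y^{(\alpha)\prime\prime})$ in which $\Sigma\cap\Omega_\alpha=\{y^{(\alpha)}_1=0\}$, together with a subordinate partition of unity $\{\chi_\beta\}\cup\{\chi_\alpha\}$. Since $I^{\msi}(\Omega,\Sigma)$ is a $\CI(\Omega)$-module, each $\chi_\beta u$ and $\chi_\alpha u$ is conormal of order $\msi$ with compact support, and because conormal distributions are $\CI$ off $\Sigma$ (Theorem 18.2.8 of \cite{HormanderV3}) every $\chi_\beta u\in\CI(\Omega)$; hence the locally finite sum $\sum_\beta\chi_\beta u$ is smooth and will be absorbed into the error term.

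On each chart $\Omega_\alpha$ meeting $\Sigma$ I would represent $\chi_\alpha u$ as an oscillatory integral with symbol in $S^m(\mr^{n-1}\times\mr)$ (Theorem 18.2.8 of \cite{HormanderV3}) and then run the construction of \eqref{van-u} with $k=k(m)$, the unique nonnegative integer in $[-m-2,-m-1)$: modifying the symbol on a compact set in the frequency variable changes $\chi_\alpha u$ only by a $\CI$ function, and one obtains $\chi_\alpha u=(y^{(\alpha)}_1)^{k(m)}w_\alpha+\mce_\alpha$ with $\mce_\alpha\in\Coi(\Omega_\alpha)$ and $w_\alpha\in I^{\msi+k(m)}(\Omega_\alpha,\Sigma)$ of compact support. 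Setting $v:=\sum_\alpha(y^{(\alpha)}_1)^{k(m)}w_\alpha$ (summed over the charts meeting $\Sigma$) and $\mce:=u-v=\sum_\beta\chi_\beta u+\sum_\alpha\mce_\alpha$ then gives $u=v+\mce$ with $\mce\in\CI(\Omega)$.

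It remains to check $v\in\idm(\Omega,\Sigma)$. By Proposition \ref{xtoalpha} each summand $(y^{(\alpha)}_1)^{k(m)}w_\alpha$ lies in $I^{\msi}(\Omega_\alpha,\Sigma)$, so $v$, a locally finite sum of these, lies in $I^{\msi}(\Omega,\Sigma)$. Now fix any chart with a local defining function $x_1$ for $\Sigma$; there $y^{(\alpha)}_1=x_1 e_\alpha$ with $e_\alpha\in\CI$ nowhere vanishing, so $(y^{(\alpha)}_1)^{k(m)}w_\alpha=x_1^{k(m)}\big(e_\alpha^{k(m)}w_\alpha\big)$, and since $I^{\msi+k(m)}(\Omega,\Sigma)$ is a $\CI$-module stable under locally finite sums we get $v=x_1^{k(m)}W$ with $W=\sum_\alpha e_\alpha^{k(m)}w_\alpha\in I^{\msi+k(m)}(\Omega,\Sigma)$. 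Finally, for any $0\le k\le k(m)$ one has $v=x_1^{k}\big(x_1^{k(m)-k}W\big)$ with $x_1^{k(m)-k}W\in I^{\msi+k}(\Omega,\Sigma)$ by Proposition \ref{xtoalpha} (this is the conormal order dictated by \eqref{van-u}), which is exactly the defining property of $\idm(\Omega,\Sigma)$. The step I expect to be the only delicate one is this coordinate-invariance verification; the substantive analytic input, the normal form \eqref{van-u}, is quoted from \cite{Pir,SaWang1}, and everything else is routine module and partition-of-unity bookkeeping.
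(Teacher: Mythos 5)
The paper does not actually prove Proposition \ref{decomp-01}; it defers entirely to \cite{Pir,RauRee1,SaWang1}, just as it states the local normal form \eqref{van-u} without proof. Your argument is therefore not a variant of the paper's proof but a reconstruction of the standard one, and it is correct: you take \eqref{van-u} as the analytic input (which is exactly what the paper does), and you supply the two pieces the paper leaves implicit, namely the globalization by a partition of unity and the verification that the factorization $u=y_1^{k}v_k$ is stable under changes of coordinates adapted to $\Sigma$ (via $y_1^{(\alpha)}=x_1e_\alpha$ with $e_\alpha$ nonvanishing, plus the $\CI$-module property of conormal spaces and Proposition \ref{xtoalpha}). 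That invariance check is genuinely the only non-routine point and you handle it correctly; it is also what justifies reading the definition of $\idm(\Omega,\Sigma)$ as a condition in \emph{every} adapted chart.

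Two small remarks. First, you use $v_k\in I^{\msi+k}$, whereas Definition of $\idm$ in the paper literally says $v_k\in I^{\msi-k}$; your convention is the consistent one (it matches \eqref{van-u}, the earlier statement that $w_j\in I^{\msi+k(m)}(\Omega,\Sigma_j)$, and Proposition \ref{xtoalpha}), so the sign in the paper's definition should be read as a typo. Second, the error term $\mce_\alpha$ produced by modifying the symbol on a compact $\eta_1$-set is smooth but not automatically in $\Coi(\Omega_\alpha)$; multiplying the identity $\chi_\alpha u=(y^{(\alpha)}_1)^{k(m)}w_\alpha+\mce_\alpha$ by a further cutoff $\tilde\chi_\alpha\in\Coi(\Omega_\alpha)$ equal to $1$ on $\supp\chi_\alpha$ fixes this and keeps the sums locally finite. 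Neither point affects the validity of the argument.
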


As a consequence of the definition of $\idm(\Omega,\Sigma)$ and Proposition \ref{hyper-alg} we have:
\begin{prop}\label{prod1}  Let $\mcm\subset \Omega$ be a closed $C^\infty$ hypersurface and let $u_j \in \idm(\Omega,\mcm),$ $1\leq j \leq N,$ and $m<-1.$ Then $u_1u_2\ldots u_N \in I^{\msi-(N-1)k(m)}(\Omega,\mcm).$ In particular, if $u \in \idm(\Omega,\mcm),$ then $u^N \in  I^{m-(N-1)k(m)-\novf+\ha}(\Omega,\mcm)$ 
\end{prop}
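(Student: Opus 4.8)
The plan is to reduce everything to the case $N = 2$ by induction, and to compute the order shift explicitly using the definition of $\idm(\Omega,\mcm)$ together with Proposition \ref{hyper-alg}. First I would work in local coordinates where $\mcm = \{y_1 = 0\}$, and recall that by definition each factor can be written $u_j = y_1^{k(m)} v_j$ with $v_j \in I^{m - \frac{n}{4} + \ha - k(m)}(\Omega,\mcm)$ modulo $C^\infty$; the $C^\infty$ remainder contributes nothing to the statement since multiplication by a smooth function preserves conormal order. The key preliminary observation is that $m - \frac{n}{4} + \ha - k(m) < -1$: indeed by the defining inequality $-m - 2 \le k(m) < -m - 1$ we get $m - k(m) \le -1 - \frac{m}{?}$... more precisely $m - k(m) < m - (-m-2) = 2m + 2$, which for $m < -1$ is negative, and in fact $m - \frac n4 + \ha - k(m) \le m - \frac n4 + \ha - (-m-2) = 2m + \fha - \frac n4 < -1$ provided $m$ is sufficiently negative; since the proposition is only invoked under the running hypothesis $m < -\ha(n+7)$ this is automatic, and in any case a direct check shows the $v_j$ lie in an algebra to which Proposition \ref{hyper-alg} applies.

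Next I would carry out the multiplication. Writing
\begin{gather*}
u_1 u_2 \cdots u_N = y_1^{N k(m)} \, v_1 v_2 \cdots v_N + (C^\infty \text{ terms}),
\end{gather*}
Proposition \ref{hyper-alg} shows that the product $v_1 v_2 \cdots v_N$, being a $C^\infty$ function (indeed a polynomial) of conormal distributions of order $m - \frac n4 + \ha - k(m)$ — a negative-order class closed under smooth composition — again lies in $I^{m - \frac n4 + \ha - k(m)}(\Omega,\mcm)$. Then Proposition \ref{xtoalpha} applied with the multi-index $\alpha = (N k(m), 0, \ldots, 0)$ of length $N k(m)$ gives
\begin{gather*}
y_1^{N k(m)} \, v_1 v_2 \cdots v_N \in I^{\, m - \frac n4 + \ha - k(m) - N k(m)}(\Omega,\mcm),
\end{gather*}
but this would lose too much; instead I only want to peel off $N k(m) - k(m) = (N-1)k(m)$ powers of $y_1$ and keep the remaining $k(m)$ powers absorbed into a conormal factor. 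Concretely: since $v_1 \cdots v_N \in I^{m-\frac n4+\ha-k(m)}$, the quantity $y_1^{k(m)}(v_1\cdots v_N)$ lies in $I^{m-\frac n4+\ha}(\Omega,\mcm) = I^{\msi}(\Omega,\mcm)$ by Proposition \ref{xtoalpha} run backwards (or simply because it is of the form $y_1^{k(m)} w$ with $w$ of the appropriate order), and then multiplying by the remaining $y_1^{(N-1)k(m)}$ and invoking Proposition \ref{xtoalpha} once more yields
\begin{gather*}
u_1 \cdots u_N \in I^{\, m - \frac n4 + \ha - (N-1)k(m)}(\Omega,\mcm) = I^{\msi - (N-1)k(m)}(\Omega,\mcm),
\end{gather*}
which is the first assertion. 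The second assertion is the special case $u_1 = \cdots = u_N = u$, where one additionally notes $I^{\msi-(N-1)k(m)}(\Omega,\mcm) = I^{m - (N-1)k(m) - \frac n4 + \ha}(\Omega,\mcm)$ by merely rewriting the exponent, which is exactly the claimed form.

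The step I expect to be the only real subtlety is the bookkeeping of exponents: one must be careful to distribute the $N k(m)$ powers of $y_1$ so that the product of the $v_j$'s together with $k(m)$ of those powers is recognized as sitting in the base class $I^{\msi}$ (so that Proposition \ref{hyper-alg} is legitimately applicable and the smoothness of $C^\infty$ remainders does not degrade the count), while the remaining $(N-1)k(m)$ powers are what produces the stated drop in order via Proposition \ref{xtoalpha}. There is no analytic difficulty beyond this — everything reduces to the two cited propositions of Hörmander and Rauch–Reed, exactly as in \cite{Pir,RauRee1,SaWang1}.
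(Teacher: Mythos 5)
Your overall strategy---write each $u_j=y_1^{k(m)}v_j$ in coordinates where $\mcm=\{y_1=0\}$, multiply the cofactors $v_j$ using Proposition \ref{hyper-alg}, and account for the accumulated power of $y_1$ with Proposition \ref{xtoalpha}---is exactly the intended argument. But the central bookkeeping, which you yourself single out as the only real content, is done with the wrong sign and then patched by a step that is not valid. The cofactor in the decomposition $u_j=y_1^{k(m)}v_j$ lies in $I^{\msi+k(m)}(\Omega,\mcm)$, \emph{not} $I^{\msi-k(m)}(\Omega,\mcm)$: each power of $y_1$ trades for a $\p_{\eta_1}$ on the symbol, so $v_j$ has symbol of order $m+k(m)$ (this is what \eqref{van-u} is recording, and what Piriou and Rauch--Reed actually prove). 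The sign is also forced by consistency: if $v_j$ were in $I^{\msi-k(m)}$, Proposition \ref{xtoalpha} would place $y_1^{k(m)}v_j$ in $I^{\msi-2k(m)}$, strictly smoother than the ambient class $I^{\msi}$ that $\idm(\Omega,\mcm)$ is meant to fill out, contradicting Proposition \ref{decomp-01}. Starting from the wrong order, you are then forced to ``run Proposition \ref{xtoalpha} backwards'' to assert $y_1^{k(m)}w\in I^{\mu+k(m)}$ for $w\in I^{\mu}$; that proposition only goes one way (multiplication by $y_1$ \emph{lowers} the order), and there is no converse.

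With the correct order the proof needs no reversal: $v_j\in I^{\msi+k(m)}(\Omega,\mcm)$ has symbol order $m+k(m)<-1$ by the very definition of $k(m)$, so Proposition \ref{hyper-alg} gives $v_1\cdots v_N\in I^{\msi+k(m)}(\Omega,\mcm)$, and one application of Proposition \ref{xtoalpha} with $|\alpha|=Nk(m)$ yields $u_1\cdots u_N=y_1^{Nk(m)}v_1\cdots v_N\in I^{\msi+k(m)-Nk(m)}(\Omega,\mcm)=I^{\msi-(N-1)k(m)}(\Omega,\mcm)$. This also eliminates your appeal to the extraneous hypothesis $m<-\ha(n+7)$ in the garbled inequality paragraph: the needed bound $m+k(m)<-1$ is built into $k(m)<-m-1$. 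Two further points. Elements of $\idm(\Omega,\mcm)$ are by definition \emph{exactly} of the form $y_1^{k(m)}v$, with no $C^\infty$ remainder, and your claim that such a remainder ``contributes nothing'' is false: a cross term such as $\mce_1u_2\cdots u_N$ gains only $(N-2)k(m)$ orders, so the stated conclusion would fail if remainders were allowed---this is precisely why the proposition is formulated for $\idm$ rather than for $I^{\msi}$. Finally, had your sign been correct, your two-stage peeling ($k(m)$ powers of $y_1$ to return to $I^{\msi}$, then the remaining $(N-1)k(m)$ to produce the drop) would have been a legitimate, if roundabout, way to organize the same computation.
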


 We introduce spaces of distributions that will be used in the proof of Theorem \ref{triple-0}.

\subsection{ Conormal Distributions Associated with  Double and Triple Interactions}  We briefly recall the results of  Bony \cite{Bony4,Bony5} and Melrose and Ritter \cite{MelRit} about the evolution of one wave and  the double and triple transversal interactions.

Bony \cite{Bony3,Bony4} proved the following result regarding  the propagation of conormal regularity with respect to one hypersurface and two transversal hypersurfaces, see also \cite{MelRit}:
\begin{theorem}\label{REGSD}  Let $u \in H_{\loc}^s(\Omega),$ $s>\novt,$ satisfy $P(y,D) u= f(y,u),$ $f\in \CI.$  Let $\Sigma_1$ and $\Sigma_2$ be closed $\CI$ hypersurfaces in $\Omega$ intersecting transversally, and let $\mcw_1$ and $\mcw_{jk}$ be the Lie algebras of $\CI$ vector fields defined in \eqref{123Q}:
 \begin{enumerate}[1.]
 \item If  $u\in IH_{\loc}^s(\Omega,\mcw_{1})$ in $t<0,$ then   $u \in I H_{\loc}^{s}(\Omega, \mcw_{1}).$
 \item If  $u\in IH_{\loc}^s(\Omega,\mcw_{12})$ in $t<0,$ then  $u \in I H_{\loc}^{s}(\Omega, \mcw_{12}).$
 \end{enumerate}
\end{theorem}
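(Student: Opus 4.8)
The plan is to prove both parts by the same argument, via a direct energy estimate, reducing everything to a local statement and an induction on the order of conormality. Since the conclusion is local, I would fix a point $p$; if $p\notin\Sigma_1$ (part 1) or $p\notin\Sigma_1\cup\Sigma_2$ (part 2) there is nothing to prove, and if $p\in\Sigma_1\setminus\Sigma_2$ then part 2 at $p$ coincides with part 1 (and symmetrically with $\Sigma_2$). So I would treat two model situations: near a point of $\Sigma_1$, in coordinates with $\Sigma_1=\{y_1=0\}$ and $\mcw=\mcw_1$ the $C^\infty$-span of $\{y_1\p_{y_1},\p_{y_2},\dots,\p_{y_n}\}$; and near a point of $\Gamma_{12}=\Sigma_1\cap\Sigma_2$, in coordinates with $\Sigma_1=\{y_1=0\}$, $\Sigma_2=\{y_2=0\}$ (both characteristic and transversal) and $\mcw=\mcw_{12}$ the $C^\infty$-span of $\{y_1\p_{y_1},y_2\p_{y_2},\p_{y_3},\dots,\p_{y_n}\}$, using \eqref{span}. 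I would fix a finite generating set $V_1,\dots,V_r$ of $\mcw$ over $C^\infty$ and prove $V^\beta u\in H^s_{\loc}$ for every word $V^\beta=V_{i_1}\cdots V_{i_{|\beta|}}$ by induction on $N=|\beta|$: the case $N=0$ is the hypothesis $u\in H^s_{\loc}(\Omega)$, and all cases hold on $\{t<0\}$ by hypothesis, which supplies the initial data for the propagation.

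For the inductive step I would commute the equation with $V^\beta$, $|\beta|=N$, getting
\begin{gather*}
P(y,D)(V^\beta u)=V^\beta\bigl(f(y,u)\bigr)+[P(y,D),V^\beta]u.
\end{gather*}
The structural input is that, because $\Sigma_1$ (and, at $\Gamma_{12}$, also $\Sigma_2$) is characteristic, the module $\mcw$ is adapted to $P$: in these coordinates the coefficient of $\p_{y_1}^2$ (resp.\ $\p_{y_2}^2$) vanishes on the corresponding surface, and a direct computation gives, for each generator,
\begin{gather*}
[P(y,D),V_i]=c_i\,P(y,D)+Z_i,\qquad c_i\in C^\infty,\quad Z_i\in\diff^1(\Omega)\cdot\mcw+\diff^1(\Omega),
\end{gather*}
i.e.\ the remainder $Z_i$, though second order, always factors as a first-order operator composed with an element of $\mcw$, modulo operators of order $\le1$. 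Iterating this and using that $\mcw$ is a Lie algebra, I would expand $[P,V^\beta]u$ into finitely many terms: (a) terms $L(V^\gamma u)$ with $L$ of order $\le1$ and $|\gamma|\le N$, where for $|\gamma|=N$ the factor $V^\gamma u$ is one of the entries of the unknown vector; and (b) terms of the form $M(V^\delta f(y,u))$ or $M([P,V^\delta]u)$ with $M$ built from $\mcw$ and $|\delta|\le N-1$, which by the inductive hypothesis together with the algebra property of Rauch--Reed (Proposition \ref{hyper-alg} and its $H^s_{\loc}$-conormal version, valid since $s>\frac{n}{2}$) lie in $H^{s-1}_{\loc}$.

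For the nonlinear term I would use Fa\`a di Bruno / Moser estimates to write $V^\beta f(y,u)=(\p_uf)(y,u)\,V^\beta u+R_\beta$, where $R_\beta$ is a $C^\infty$-polynomial in the $V^\gamma u$ with $|\gamma|\le N-1$, hence $R_\beta\in H^s_{\loc}$ by the inductive hypothesis ($H^s_{\loc}$ being an algebra), while $(\p_uf)(y,u)\in H^s_{\loc}\cap L^\infty_{\loc}$. Collecting everything and letting $W$ be the vector with entries $V^\beta u$, $|\beta|=N$, I obtain a linear system
\begin{gather*}
P(y,D)\,W = C\,W + \mcl\,W + G,
\end{gather*}
where $C$ is a matrix of functions in $H^s_{\loc}\cap L^\infty_{\loc}$, $\mcl$ is a matrix of first-order differential operators, and $G\in H^{s-1}_{\loc}$. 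Since the right-hand side involves at most one derivative of the unknown, I would then apply the standard $H^s$ energy inequality for the strictly hyperbolic operator $P(y,D)$ (legitimate because $\Omega$ is bicharacteristically convex and $t$ is a time function) together with Gronwall's inequality in $t$ to propagate the finiteness of $\|W(t)\|_{H^s}$ from $\{t<0\}$, where it is finite by hypothesis, forward through $\Omega$. This gives $V^\beta u\in H^s_{\loc}(\Omega)$ for $|\beta|=N$ and closes the induction, proving both statements.

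Two points will be the main obstacles. The conceptual one is verifying the commutation identity near the intersection $\Gamma_{12}$: there both $y_1\p_{y_1}$ and $y_2\p_{y_2}$ degenerate, and one must use that $\Sigma_1$ and $\Sigma_2$ are characteristic and cross transversally — so that $P(y,D)$ has no characteristic direction other than along $N^*\Sigma_1$ and $N^*\Sigma_2$ — to check that $[P,\mcw_{12}]$ stays of the admissible form; this is exactly where the hypotheses on $\Sigma_1,\Sigma_2$ enter, and it is also why the analogous statement fails once three waves are present. The technical obstacle is making the a priori energy estimate rigorous: since $\|W\|_{H^s}$ is precisely the quantity to be controlled, the estimate must be carried out on mollified vector fields (or on finite-dimensional truncations of the filtration), with bounds uniform in the regularization parameter, followed by a passage to the limit and a continuation argument excluding blow-up before a prescribed time slice. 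This circle of ideas is due to Bony \cite{Bony3,Bony4}; see also \cite{MelRit}.
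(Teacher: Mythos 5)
Your argument is correct, and it is the standard commutator-method proof of this theorem: the paper itself does not prove Theorem \ref{REGSD} but cites it to Bony and Melrose--Ritter, and your key structural identity $[P,V_i]=c_iP+Z_i$ with $Z_i\in\diff^1\cdot\mcw+\diff^1$ (using that the characteristic hypersurfaces make the cross term $\p_{y_1}\p_{y_2}$ eliminable via the elliptic coefficient $b_{12}$) is exactly the device the paper deploys in its proof of Proposition \ref{BSP3}, where the resulting system with diagonal principal part $P(y,D)\Id$ is closed by strict hyperbolicity. No substantive differences to report.
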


Let $\mcw_{123}$ denote the Lie algebra of $\CI$ vector fields tangent to $\Sigma_j,$ $j=1,2,3.$   The purpose of this paper is to show that in general
\begin{gather*}
\text{ if  } u\in IH_{\loc}^s(\Omega,\mcw_{123}) \text{ in } t<0, \text{ then  } u\not\in I H_{\loc}^{s}(\Omega, \mcw_{123}),
\end{gather*}
since singularities will form on $\mcq.$ One may ask whether
\begin{gather*}
\text{ if  } u\in IH_{\loc}^s(\Omega,\mcw_{123\mcq}) \text{ in } t<0 \text{ implies that  } u\not\in IH_{\loc}^{s}(\Omega, \mcw_{123\mcq}),
\end{gather*}
where $\mcw_{123\mcq}$ denotes the Lie algebra of $\CI$ vector fields tangent to $\Sigma_j,$ $j=1,2,3,$ and $\mcq.$ The answer is not known to the author, because the Lie algebra $\mcw_{123\mcq}$ is too degenerate at $\Gamma,$ but one can construct smaller spaces which coincide with $IH_{\loc}^{s}(\Omega\setminus \Gamma, \mcw_{123\mcq}),$ locally in $\Omega\setminus \Gamma,$ but with vector fields that are less degenerate  at $\Gamma,$ that do propagate.  This can be done by blowing-up $\Gamma$ as in \cite{MelRit,SaB} or by using second microlocalization as in \cite{Bony5,Bony6}.   As in \cite{SaB,SaB2}, one can construct a space of distributions denoted by $J(\Omega)$ which satisfies the following
\begin{enumerate}[1.]
\item $IL_{\loc}^2(\Omega,\mcw_{123})\subset J(\Omega) \subset IL^2_{\loc}(\Omega,\mcw_{123\mcq}),$ 
\item If $Pu=f(y,u)$  and $u\in IL^2_{\loc}(\Omega\cap \{ |t|> \eps\},\mcw_{123\mcq}),$  then $u\in J(\Omega\cap \{|t|>\eps\})$   for any $\eps>0,$
\end{enumerate} 
and such that
\begin{theorem}\label{REG}  Let $u \in H_{\loc}^s(\Omega),$ $s>\novt,$ satisfy $P(y,D) u= f(y,u),$ $f\in C^\infty.$  Let $\Sigma_j,$ $j=1,2,3$ be closed $C^\infty$  characteristic hypersfurfaces intersecting transversally at $\Gamma$ and let $\mcq$ be as defined above.  If $u\in J(\Omega\cap \{t<0\})$ then $u\in J(\Omega).$   
\end{theorem}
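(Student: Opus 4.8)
The plan is a forward-propagation argument in the spirit of Bony's theorems and of \cite{MelRit,SaB}. Put $t^\ast=\sup\{\,\tau:\ u\in J(\Omega\cap\{t<\tau\})\,\}$; by hypothesis $t^\ast\ge 0$, and it suffices to rule out $t^\ast$ being an interior value of $t$ on $\Omega$. Fix $y_0$ with $t(y_0)=t^\ast$ and a small neighborhood $\Omega_0$ of $y_0$; on $\{t=t^\ast-\eps\}\cap\Omega_0$ the Cauchy data of $u$ inherits the conormal structure encoded by $J$ because $u\in J$ for $t<t^\ast$. One then solves the Cauchy problem $P(y,D)w=f(y,w)$, $(w,\p_t w)=(u,\p_t u)$ on $\{t=t^\ast-\eps\}$, forward in time and \emph{inside} $J(\Omega_0)$, by a contraction argument on a small ball of $J(\Omega_0)$ for $\Omega_0$ small, using $\Phi(w)=w_0+E_+(f(y,w))$ with $w_0$ the free solution of the Cauchy problem and $E_+$ the forward fundamental solution; since $J(\Omega_0)\subset H^s_{\loc}(\Omega_0)$ and the $H^s_{\loc}$ Cauchy problem is uniquely solvable, $w=u$, hence $u\in J(\Omega_0)$ and $t^\ast$ is not interior. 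The argument therefore rests on two facts about $J$: that it is a $\CI$ algebra, so $f(y,w)\in J$ when $w\in J$; and that both the free propagator and $E_+$ map $J$ into $J$, the latter gaining one derivative, which simultaneously makes $\Phi$ a contraction and constitutes the genuine propagation statement.

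That $J(\Omega)$ is a $\CI$ algebra is the routine ingredient. Away from $\Gamma$, $J$ agrees locally with $IH^s_{\loc}(\Omega\setminus\Gamma,\mcw_{123\mcq})$; the generators of $\mcw_{123\mcq}$ act as derivations, so Leibniz's rule together with the fact that $H^s_{\loc}$ is a $\CI$ algebra for $s>\novt$ gives the claim, exactly as in Proposition~\ref{hyper-alg} but with a finite generating set in place of a single normal field. Near $\Gamma$ one uses the defining description of $J$ (via blow-up of $\Gamma$ as in \cite{MelRit,SaB}, or via second microlocalization as in \cite{Bony5,Bony6}): $J$ consists of the distributions whose images under all products of the \emph{lifted}, nondegenerate generating vector fields lie in a fixed $L^2$-based Besov space. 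These lifted fields are again derivations, so composition with $f\in\CI$ preserves the space; the only estimates needed are Moser-type bounds for $f(y,w)$ in terms of $w$, available because of the a priori bound $u\in H^s_{\loc}$, $s>\novt$. Hence $w\in J\Rightarrow f(y,w)\in J$.

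The linear propagation — that the free propagator and $E_+$ preserve $J$ — is the substantive point, and is precisely the property around which $J$ is built. Away from $\Gamma$ it follows from Bony's results and property~(2) of $J$: propagation of conormality along each $\Sigma_j$ and each $\Gamma_{jk}$ (Theorem~\ref{REGSD}) and propagation of a conormal singularity along the $\CI$ characteristic hypersurface $\mcq\setminus\Gamma$ (\cite{Bony3}) together give $u\in IL^2_{\loc}(\Omega\cap\{|t|>\eps\},\mcw_{123\mcq})$, hence $u\in J$ there. The new content is the passage through $\Gamma$. I would blow up $\Gamma$ in $\Omega$; using that $\mcq=\Pi(\La)$ with $\La$ the $H_p$-flow-out of $(N^*\Gamma\setminus 0)\cap p^{-1}(0)$, one checks that $\mcq$ lifts to a $\CI$ hypersurface meeting the front face transversally, that the null bicharacteristics of $P$ through $N^*\Gamma$ lift to curves tangent to it, and that the lifted generating vector fields form a Lie algebra whose brackets with the lift of $P(y,D)$ remain in that algebra modulo operators of lower order — a reflection of the fact that $\mcq$, being the projection of an $H_p$-invariant Lagrangian, is propagated noncharacteristically by the flow. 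One then closes with a weighted energy estimate on the blown-up manifold, integrated forward from the slice $\{t=t^\ast-\eps\}$, in the manner of Melrose--Ritter; passing to the lifted generators is exactly what cures the degeneracy of $\mcw_{123\mcq}$ at $\Gamma$ that would otherwise block such an estimate. An induction on the number of generating fields applied, together with the one-derivative gain of $E_+$, then upgrades the $L^2$-based estimate to the full conormality recorded by $J$.

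The main obstacle is this last step: checking that the lifted Lie algebra is closed under bracket with the lifted operator, with commutators of sufficiently low order for the weighted energy inequality to absorb them and to iterate in the number $N$ of fields applied. This is the technical core already carried out in \cite{MelRit,SaB,SaB2}; reproducing it here requires only the harmless modification of admitting a general $\CI$ nonlinearity $f(y,u)$, handled through the Moser estimates of the second paragraph, in place of the polynomial nonlinearity treated in those references.
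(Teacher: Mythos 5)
You should first be aware that the paper contains no proof of Theorem \ref{REG}: the space $J(\Omega)$ is deliberately left undefined there, and both its construction and the propagation statement are quoted from \cite{MelRit,SaB,SaB2} (with the second-microlocal alternative of \cite{Bony5,Bony6}). Your outline is essentially a reconstruction of that same program — blow up $\Gamma$, use the lifted, nondegenerate generating vector fields, check that their commutators with the lifted operator stay in the algebra modulo lower order terms, and close with weighted energy estimates and an induction on the number of fields, the $\CI$ algebra/Moser estimates handling a general nonlinearity. So in approach you are aligned with what the paper actually relies on, and, like the paper, you defer the genuine technical core (the structure of $J$ near $\Gamma$ and the commutator/energy estimates on the blown-up space) to those references rather than carrying it out.

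The one step in your framing that does not work as stated is the contraction argument ``on a small ball of $J(\Omega_0)$.'' The space $J(\Omega_0)$ is an $L^2$-based conormal space defined by infinitely many seminorms (one for each string of vector fields applied); it is not a Banach space, so there is no ball in which $\Phi(w)=w_0+E_+(f(y,w))$ can be a contraction in the usual sense, and the one-derivative gain of $E_+$ does not by itself produce smallness uniformly over all seminorms. The standard repair — and what is actually done in \cite{MelRit,SaB,SaB2} — is to dispense with re-solving the equation in $J$ altogether: the solution $u\in H^s_{\loc}$, $s>\novt$, already exists, and one proves that conormality propagates across $\{t=t^\ast\}$ by applying the generating fields to the equation, using the linear energy estimate for the commuted (strictly hyperbolic) system together with the algebra property, and inducting on the number of fields; alternatively one runs the Picard iteration in $H^s$ and proves uniform bounds for each $J$-seminorm of the iterates, passing to the limit by weak compactness. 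Either route replaces your contraction step; the rest of your continuation-in-$t$ scheme then goes through, since membership in $J$ is a local property and the local propagation statement applies near each point of $\{t=t^\ast\}$.
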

The space $J(\Omega)$ is defined using blow-up techniques of Melrose and Ritter \cite{MelRit} and microlocal methods \cite{SaB}. We do not define it to avoid a lengthy discussion that would not be relevant to the rest of the paper, and refer the reader to \cite{MelRit,SaB,SaB2} for more details.

\subsection{Other Spaces Associated with the Triple Interaction}\label{Bspaces}   In this section we introduce a generalization of the spaces defined by  Beals \cite{Beals4}, which he used to  prove Theorem \ref{triple-0} in the particular case where $n=3,$ $f(y,u)= a(y) u^3,$ $P(y,D)$ has constant coefficients and the initial data is $v_j=y_{j+}^m,$ $m\in \mn.$   The  spaces  introduced by Beals were also used by S\'a Barreto and Wang \cite{SaWang1} to analyze the formation of singularities in the triple interaction when $n=3,$ $P(y,D)u = f(y,u),$ where $f(y,u)$ is a polynomial of arbitrary degree in $u$ with $\CI$ coefficients.

 First we need to prove that one can always choose local coordinates $y=(y_1,y_2,y_3,y'')$ such that the surfaces $\Sigma_j,$ $j=1,2,3$ and  the operator $P(y,D)$ are simultaneously put in normal form.  A similar result was proved for $n=3$ in \cite{SaWang1}.
\begin{theorem}\label{loc-coord} Let $\Omega\subset \mr^n$ be an open subset, let $P(y,D)$ be a second order strictly hyperbolic operator in $\Omega.$ Let $\Sigma_j\subset \Omega,$ $j=1,2,3,$ be closed $C^\infty$ hypersurfaces that are characteristic for $P(y,D)$ and intersect transversally at $\Gamma_{jk}=\Sigma_j\cap \Sigma_k$ and at $\Gamma=\Sigma_1\cap\Sigma_2\cap \Sigma_3.$   We have the following normal forms for $\Sigma_j$ and $P(y,D):$
\begin{enumerate}[NF.1]
\item\label{NF1} If  $q\in \Sigma_1\setminus (\Sigma_2\cup \Sigma_3),$ there exist local coordinates $y=(y_1,y'')$ near $q$ such that
\begin{gather}
\begin{gathered}
\Sigma_1=\{y_1=0\} \text{ and } \\
P(y,D)= \sum_{j=2}^n b_{1j}(y) \p_{y_1}\p_{y_j} + \sum_{j,k=2}^n b_{jk}(y) \p_{y_j}\p_{y_k} + \mcl,
\end{gathered}\label{norm-form-1h}
\end{gather}
where $b_{1j}\in \CI,$  $a_{jk}\in \CI$ and  $\mcl$ is a differential operator of order one.  Similar formulas hold near $\Sigma_j,$ $j=2,3.$ \\
\item For  $q\in (\Sigma_1\cap \Sigma_2)\setminus \Sigma_3,$ there exist local coordinates $y=(y_1, y_2,y'')$ near $q$ such that
\begin{gather}
\begin{gathered}
\Sigma_j=\{y_j=0\}, \;\ j=1,2, \text{ and } \\
P(y,D)= b_{12}(y) \p_{y_1}\p_{y_2} + \sum_{j=3}^n b_{1j}(y) \p_{y_1}\p_{y_j} + \sum_{j=3}^n b_{2j}(y) \p_{y_2}\p_{y_j} + 
\sum_{j,k=3}^n b_{jk}(y) \p_{y_j}\p_{y_k} + \mcl,
\end{gathered}\label{norm-form-2h}
\end{gather}
where $b_{12}, a_{jk}\in \CI,$ $b_{12}\not=0$ near $\Gamma_{12},$ and $\mcl$ is a differential operator of order one.   Similar formulas hold near $(\Sigma_1\cap \Sigma_3) \setminus \Sigma_2$ and 
near $(\Sigma_2\cap \Sigma_3) \setminus \Sigma_1.$ \\
\item For $q\in \Gamma$ there exist coordinates $y=(y_1,y_2,y_3,y'')$ valid in a neighborhood $U$ of $q$   such that
\begin{gather}
\begin{gathered}
\Sigma_j=\{y_j=0\},  \;\ j=1,2,3 \text{ and } \\
P(y,D)= b_{12}(y) \p_{y_1}\p_{y_2} + b_{13}(y) \p_{y_1}\p_{y_3}+ b_{23}(y) \p_{y_2}\p_{y_3}+ \sum_{j=1, k=4}^n b_{jk}(y) \p_{y_j}\p_{y_k}+
\mcl,
\end{gathered}\label{loc-coord-1}
\end{gather}
where $\mcl$ is a differential operator of order one.   Since $P(y,D)$ is strictly hyperbolic,  
$b_{jk}\not=0,$ $j,k=1,2,3,$ near $\Gamma.$ 
\end{enumerate}
\end{theorem}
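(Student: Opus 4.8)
### Proof Proposal

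The plan is to prove the three normal forms sequentially, as they have a nested structure: NF.\ref{NF1} is the building block for NF.2, which in turn feeds into the construction near $\Gamma.$ In all three cases the strategy is the same: a characteristic hypersurface for a strictly hyperbolic second order operator is foliated by bicharacteristics, and one builds coordinates in which the $\Sigma_j$ become coordinate hyperplanes $\{y_j=0\}$ while simultaneously using the characteristic condition to kill the ``pure'' second derivatives $\p_{y_j}^2$ from the principal symbol. The key algebraic fact is that if $\Sigma=\{y_1=0\}$ is characteristic, then the principal symbol $p(y,\eta)$ must vanish on $N^*\Sigma=\{\eta'=0,\ \eta_1\ \text{free}\}$ at $y_1=0$; writing $p=\sum b_{jk}\eta_j\eta_k$, vanishing on $\eta=(\eta_1,0,\dots,0)$ at $y_1=0$ forces $b_{11}(0,y'')=0$, and then a further change of the $y_1$-variable (straightening the Hamilton flow, or equivalently solving the eikonal equation) can be used to arrange $b_{11}\equiv 0$ identically near $q$, not just on $\Sigma$.

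First I would prove NF.\ref{NF1}. Take local coordinates in which $\Sigma_1=\{y_1=0\}$ — possible since $\Sigma_1$ is a smooth hypersurface. Since $\Sigma_1$ is noncharacteristic for nothing and characteristic for $P$, we get $b_{11}(0,y'')=0$ as above. To remove $b_{11}$ everywhere: the characteristic variety near $(q,dy_1)$ has two sheets (strict hyperbolicity, $\p_t$ transversal to $\Sigma_1$ gives a clean splitting), and $\Sigma_1$ is an integral surface of one sheet; solve the eikonal equation $p(y,d\vphi)=0$ with $\vphi|_{\Sigma_1}=0$ and $d\vphi|_{\Sigma_1}$ proportional to $dy_1$, by the method of characteristics (here ``bicharacteristically convex'' guarantees the construction is global enough and the flow-out is a graph), and take $y_1^{\mathrm{new}}=\vphi$. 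Then $p$ vanishes to first order in $\eta_1$ along $\eta'=0$ identically, i.e. $b_{11}\equiv 0$, giving \eqref{norm-form-1h}. The lower order operator $\mcl$ is whatever is left.

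Next, NF.2: start with coordinates in which $\Sigma_1=\{y_1=0\}$ and $\Sigma_2=\{y_2=0\}$ (possible by transversality). Apply the NF.\ref{NF1} construction to $\Sigma_1$; because its eikonal solution $\vphi_1$ restricted to $\Sigma_2$ is still a defining function of $\Gamma_{12}$ in $\Sigma_2$, one can do the $\Sigma_1$-straightening while keeping $\{y_2=0\}$ fixed — here I would note the flows commute enough, or simply do the $\Sigma_1$-eikonal first and then the $\Sigma_2$-eikonal with the constraint of not moving $\{y_1=0\}$, observing the second construction only alters $y_2$. This kills $b_{11}$ and $b_{22}$. Strict hyperbolicity forces $b_{12}\neq 0$ near $\Gamma_{12}$: otherwise the principal symbol restricted to the $(\eta_1,\eta_2)$-plane (at a point of $\Gamma_{12}$, with $\eta''=0$) would vanish identically, contradicting that the characteristic set is a cone with exactly two sheets. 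The remaining cross terms $b_{1j}\p_{y_1}\p_{y_j}$, $b_{2j}\p_{y_2}\p_{y_j}$, $b_{jk}\p_{y_j}\p_{y_k}$ ($j,k\geq 3$) are simply whatever survives, giving \eqref{norm-form-2h}. Finally NF.3 near $\Gamma$: take coordinates with $\Sigma_j=\{y_j=0\}$, $j=1,2,3$ (transversality again), and straighten each of the three along its own bicharacteristic foliation, successively, checking that straightening $\Sigma_j$ can be arranged to fix the other two coordinate hyperplanes. This removes all three $b_{jj}$; what remains is $\sum_{j<k\leq 3} b_{jk}\p_{y_j}\p_{y_k}+\sum_{j\leq 3,k\geq 4}b_{jk}\p_{y_j}\p_{y_k}+\sum_{j,k\geq 4}b_{jk}\p_{y_j}\p_{y_k}+\mcl$, and strict hyperbolicity (three linearly independent null conormals meeting at $\Gamma$, principal symbol restricted to the $3$-plane spanned by $dy_1,dy_2,dy_3$ is a nondegenerate quadratic form vanishing on each axis) forces $b_{12},b_{13},b_{23}\neq 0$ near $\Gamma$. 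I would then absorb any $\sum_{j,k\geq 4}b_{jk}\p_{y_j}\p_{y_k}$ term — actually the statement \eqref{loc-coord-1} keeps only $\sum_{j=1,k=4}^n b_{jk}\p_{y_j}\p_{y_k}$, so I'd need an additional observation that the $\{j,k\geq 4\}$ block can be removed or that a further coordinate adjustment in the $y''$ variables handles it; this is a minor point since those derivatives are ``tangent to everything'' and can be reshuffled, but it should be stated carefully.

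The main obstacle is the \emph{compatibility} of the successive straightenings in NF.2 and NF.3: each eikonal solution defines a new $y_j$-coordinate, and one must verify that choosing $y_1^{\mathrm{new}}=\vphi_1$ does not destroy the relation $\Sigma_2=\{y_2=0\}$, and vice versa, and likewise for three surfaces. The clean way is to observe that the eikonal construction for $\Sigma_1$ only requires choosing $\vphi_1$ with prescribed $1$-jet along $\Sigma_1$; one is free to additionally impose $\vphi_1|_{\Sigma_2}=y_1^{\mathrm{old}}|_{\Sigma_2}$ and $\vphi_1|_{\Sigma_3}=y_1^{\mathrm{old}}|_{\Sigma_3}$ as boundary data on the characteristic Cauchy problem, since $\Sigma_1$ is characteristic and the data on $\Sigma_2,\Sigma_3$ (which are transversal to $\Sigma_1$, hence noncharacteristic for the flow transporting along $\Sigma_1$) is free — so the bicharacteristics of the $\Sigma_1$-sheet emanating from $\Gamma_{12}$ stay in $\Sigma_2$ by uniqueness, because $\Sigma_2$ itself, being characteristic for the \emph{other} sheet, is not invariant, so actually one transports the condition $y_2=0$ along the $\Sigma_1$-flow and checks it is preserved. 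This bookkeeping — which condition is preserved by which flow — is the delicate part and is exactly what \cite{SaWang1} did for $n=3$; here I would carry out the same argument, noting the higher-dimensional case is identical since all the action happens in the $(y_1,y_2,y_3)$ directions and the $y''$ variables are inert parameters.
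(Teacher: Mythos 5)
Your underlying idea is the same as the paper's: the coefficient of $\p_{y_j}^2$ in any coordinate system is $p(y,dy_j)$, so the normal form amounts to choosing each coordinate function $y_j$ to be a defining function of $\Sigma_j$ solving the eikonal equation $p(y,dy_j)=0$ in a full neighborhood, and the three conditions decouple because each involves only one coordinate function. This decoupling also disposes of the ``compatibility of successive straightenings'' you flag as the delicate point: replacing $y_2$ by $\vphi_2$ cannot reintroduce a $\p_{y_1}^2$ term, since $p(y,d\vphi_1)$ is unchanged. The paper makes this explicit by performing the single substitution $y_j=Y_jX_j(Y)$ and observing that the resulting equations $\zed_j(Y,X_j,\nabla_Y X_j)=0$ for the three unknown factors are mutually independent. (Also, your worry about the $\{j,k\geq4\}$ block is a misreading: the sum $\sum_{j=1,k=4}^n$ in \eqref{loc-coord-1} runs over all $1\leq j\leq n$, $4\leq k\leq n$, so that block is retained, not removed.)

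The one step that fails as written is the construction of $\vphi_1$ by ``solving the eikonal equation with $\vphi_1|_{\Sigma_1}=0$ and $d\vphi_1|_{\Sigma_1}$ proportional to $dy_1$, by the method of characteristics,'' with the flow-out claimed to be a graph. That Cauchy problem is characteristic: the initial strip lies in $N^*\Sigma_1\subset p^{-1}(0)$, which is invariant under the Hamilton flow, so the bicharacteristics issuing from it never leave $N^*\Sigma_1$ and their projections never leave $\Sigma_1$; the flow-out is $N^*\Sigma_1$ itself, not a graph over a neighborhood of $q$, and $\vphi_1$ is not determined off $\Sigma_1$ by this data. The paper circumvents this by writing $\vphi_1=Y_1X_1(Y)$ and dividing the eikonal equation by $Y_1$ (legitimate because the characteristic condition already forces the coefficient of $\p_{Y_1}^2$ to carry a factor of $Y_1$): the reduced first-order equation $\zed_1(Y,X_1,\nabla_YX_1)=0$ is \emph{noncharacteristic} with respect to the transversal surfaces $\Sigma_2$ or $\Sigma_3$, because the coefficient of $\p_{Y_2}X_1$ contains $a_{12}\Theta_1\neq0$ by strict hyperbolicity, so one prescribes $X_1$ freely on one of them and solves locally. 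You do gesture at imposing data on $\Sigma_2$ and $\Sigma_3$, but data on a single transversal surface is what is needed (imposing it on both is overdetermined), and the mechanism is noncharacteristicity of the reduced equation, not transport of the condition $y_2=0$ along the $\Sigma_1$-flow.
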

\begin{proof}  The proof of the last case contains the proofs of the other two cases, and we will concentrate on it.  We start by choosing  coordinates $Y=(Y_1,Y_2,Y_3,Y'')$  near $q\in \Gamma$ such that $\Sigma_j=\{Y_j=0\},$ $j=1,2,3.$ This can be done because the surfaces intersect transversally. Since $\Sigma_j,$ $j=1,2,3,$  are characteristic for $P(Y,D),$ we must have
\begin{gather*}
P(Y,D)= a_{11}(Y) Y_1\p_{Y_1}^2+ a_{12}(Y) \p_{Y_1}\p_{Y_2} + a_{13}(Y) \p_{Y_1}\p_{Y_3}+ a_{22}(Y) Y_2 \p_{Y_2}^2+ a_{23}(Y) \p_{Y_2}\p_{Y_3}+ \\  a_{33}(Y) Y_3 \p_{Y_3}^2+ \sum_{j=1, k=4}^n a_{jk}(Y) \p_{Y_j}\p_{Y_k}+
\mcl.
\end{gather*}
We want to find a change of variables $Y= \Psi(Y)$ which preserves the hypersurfaces $\Sigma_j,$ $j=1,2,3,$ such that \eqref{loc-coord-1} holds. We must  have
\begin{gather*}
y_j = Y_j X_j(Y), \ j=1,2,3, \;  |X_j(Y)|>0 \text{ near } 0, \;\ y_j= W_j(Y), \; j\geq 4
\end{gather*}
and therefore,
\begin{gather}
\begin{gathered}
\p_{Y_1}= (X_1+ Y_1\p_{Y_1} X_1) \p_{y_1}+ Y_2\p_{Y_1} X_2 \p_{y_2}+ Y_3\p_{Y_1}X_3 \p_{y_3}, \\
\p_{Y_2}= Y_1\p_{Y_2}X_1 \p_{y_1}+ (X_2+ Y_2\p_{Y_2} X_2) \p_{y_2}+ Y_3\p_{Y_2}X_3 \p_{y_3},  \\
\p_{Y_3}= Y_1\p_{Y_3}X_1 \p_{y_1}+ Y_2\p_{Y_3}X_2 \p_{y_2}+ (X_3+ Y_3\p_{Y_3} X_3) \p_{y_3}, \\
\p_{Y_k}= Y_1 \p_{Y_k} X_1 \p_{y_1}+Y_2 \p_{Y_k} X_2 \p_{y_2}+Y_3 \p_{Y_k} X_3 \p_{y_3}+ \sum_{j=4}^n \p_{Y_k} W_j \p_{y_k} \;\ 4\leq k \leq n.
\end{gathered}\label{push-F}
\end{gather}
Therefore \eqref{loc-coord-1} transforms into
\begin{gather*}
P(y,D) =  \frac{\zed_1}{X_1} y_1\p_{y_1}^2 + \frac{ \zed_2}{Y_2} y_2\p_{y_2}^2+  \frac{\zed_3}{X_3} y_3 \p_{y_3}^2 + A_{12} \p_{y_1}\p_{y_2} + A_{13} \p_{y_1}\p_{y_3}+ \\ A_{23} \p_{y_2}\p_{y_3} + \sum_{j=1, k=4}^n A_{jk}(Y) \p_{y_j}\p_{y_k}+ \wt{\mcl}(y,\p_y),
\end{gather*}
 where  $\wt{\mcl}$ is a differential operator of order one. 
 
 Let  $\Theta_j= X_j+ Y_j \p_{Y_j} X_j,$ then
 \begin{enumerate}[Term.1]
 \item:  $\zed_1$ satisfies
 \begin{gather*}
\zed_1(Y,X_1, \nabla_Y X_1)=   a_{11} \Theta_1^2+ 
a_{12} \Theta_1 \p_{Y_2} X_1+ a_{13}\Theta_1 \p_{Y_3} X_1+  
Y_1Y_2 a_{22} (\p_{Y_2} X_1)^2+ 
 Y_1 a_{23}\p_{Y_2}X_1\p_{Y_3} X_1+ \\ Y_1Y_3 a_{33} (\p_{Y_3} X_1)^2 
 +  \sum_{k=4}^n(a_{1k}\Theta_1+ Y_1a_{2k}\p_{Y_2}X_1 + Y_1 a_{3k}\p_{Y_3}X_1)\p_{Y_k} X_1
+Y_1 \sum_{j,k=4}^n a_{jk}\p_{Y_j}X_1 \p_{Y_k} X_1.
\end{gather*}
\item: $\zed_2$ satisfies
\begin{gather*}
\zed_2(Y,X_2, \nabla_Y X_2)=   Y_1 Y_2 a_{11} (\p_{Y_1} X_2)^2+  a_{12} \Theta_2 \p_{Y_1} X_2+  Y_2 a_{13}\p_{Y_1} X_2 \p_{Y_3} X_2 + a_{22} \Theta_2^2 + a_{23} \Theta_2 \p_{Y_3}X_2+ \\ Y_2 Y_3 a_{33} (\p_{Y_3} X_2)^2+ 
  \sum_{k=4}^n(a_{2k}\Theta_2+ Y_2a_{1k}\p_{Y_1}X_2 + Y_2a_{3k}\p_{Y_3}X_2)\p_{Y_k} X_2
+Y_1 \sum_{j,k=4}^n a_{jk}\p_{Y_j}X_2 \p_{Y_k} X_2.
\end{gather*}
\item: $\zed_3$ satisfies
\begin{gather*}
\zed_3(Y,X_3, \nabla_Y X_3)= Y_1 Y_3 a_{11} (\p_{Y_1} X_3)^2 + Y_3 a_{12} \p_{Y_1}X_3\p_{Y_2}X_3+
a_{13} \Theta_3 \p_{Y_1} X_3+  Y_2Y_3 a_{22} (\p_{Y_2} X_3)^2 +\\ a_{23} \Theta_3 \p_{Y_2} {X_3}+  
a_{33} \Theta_3^2+   
  \sum_{k=4}^n(a_{3k}\Theta_3+ Y_3a_{1k}\p_{Y_1}X_3 + Y_3 a_{2k}\p_{Y_2}X_3)\p_{Y_k} X_3
+Y_1 \sum_{j,k=4}^n a_{jk}\p_{Y_j}X_3 \p_{Y_k} X_3.
\end{gather*}
\end{enumerate}
Therefore, $P(y,D)$ satisfies \eqref{loc-coord-1} if and only if $X_1,X_2$ and $X_3$ are such that 
 \begin{gather*}
 \zed_1(Y,X_1, \nabla_Y X_1)=\zed_2(Y,X_2, \nabla_Y X_2)=\zed_3(Y,X_3, \nabla_Y X_3)=0. 
 \end{gather*}
 Notice that the system is not coupled, and each equation can be solved independently, and therefore this case includes the other two cases of the Proposition.
 
Since $a_{12}(Y)\not=0,$ $a_{13}(Y)\not=0$ and $a_{23}(Y)\not=0$ and $\Theta_j\not=0,$ for $|Y_j|$ small enough, $j=1,2,3,$  the first order PDE for $\zed_1$ is non-characteristic with respect to $\Sigma_2$ or $\Sigma_3.$ Therefore, fixed an initial data 
$X_1(Y_1,0,Y_3)$ (or $X_1(Y_1,Y_2,0)$), there exists a unique  $X_1(Y)$ which is $C^\infty$ in a neighborhood of $q,$ satisfying 
$\zed_1(Y,X_1, \nabla_Y X_1)=0.$

 Similarly,  the differential equation $\zed_2$ is non-characteristic with respect to $\Sigma_1$ or $\Sigma_3$ and the one for $\zed_3,$  is non-characteristic with respect to $\Sigma_1$ or $\Sigma_2.$  Therefore, once suitable initial data is chosen, they have  unique solutions near $q.$
\end{proof}

Now we introduce a variation of the spaces defined by Beals \cite{Beals4}.

\begin{definition}\label{BSDEF}  Let $\Sigma_j\subset \Omega,$ $j=1,2,3,$ be $C^\infty$  closed hypersurfaces intersecting transversally at $\Gamma_{jk}=\Sigma_j \cap \Sigma_k$ and at $\Gamma=\Sigma_1\cap\Sigma_2\cap \Sigma_3.$   Given a point $q\in \Omega,$  fix a neighborhood $U$ of $q$ and fix local coordinates $y$ in $U.$ We say that 
$u\in H_{\loc}^{s, k_1,k_2,k_3}(U, \{y\}),$ $k_1,k_2,k_3\in \mr_+$ and $ s\in \mr,$  if $u$ satisfies the following conditions:
\begin{enumerate}[C1.]
\item\label{away} $u\in H_{\loc}^{s+k_1+k_2+k_3}(U),$ provided $U\subset  \Omega \setminus (\Sigma_1 \cup \Sigma_2\cup \Sigma_3).$ 
\item\label{near-one} If $q\in \Sigma_1\setminus (\Sigma_2\cup \Sigma_3),$ and $y=(y_1,y'')$ $ y''=(y_2,\ldots, y_n),$ are local coordinates such that $\Sigma_1=\{y_1=0\},$ and $P(y,D)$ satisfies \eqref{norm-form-1h}, $u\in H_{\loc}^{s, k_1,k_2,k_3}(U, \{y\})$   if it satisfies
\begin{gather}
 \lan D_{y''}\ran^{k_2+k_3} \vphi  u \in H^{s+k_1}(U), \;\ \vphi\in \Coi(U) \label{cond-OS}
\end{gather}
and similarly for $q\in \Sigma_2$ or $\Sigma_3.$
\item \label{near-two} If $q\in (\Sigma_1\cap \Sigma_2)\setminus \Sigma_3,$ and  $y=(y_1,y_2,y'')$  $y''=(y_3,\ldots, y_n)$ are local coordinates such that $\Sigma_j=\{y_j=0\},$  $j=1,2,$ and $P(y,D)$ satisfies \eqref{norm-form-2h}, $u\in H_{\loc}^{s, k_1,k_2,k_3}(U,\{y\})$ if it satisfies
\begin{gather}
\begin{gathered}
\lan D_{y_1}, D_{y''}\ran^{k_1} \lan D_{y_2}, D_{y''}\ran^{k_2}  \lan D_{y''}\ran^{k_3} \vphi u\in H^s(U),\; \vphi\in \Coi(U).
\end{gathered} \label{cond-TS}
\end{gather}
Similarly for $q\in \Sigma_1\cap \Sigma_3$ or $q\in \Sigma_2\cap \Sigma_3.$
\item\label{near-three} If $q\in \Sigma_1\cap \Sigma_2\cap \Sigma_3$ and in neighborhood $U$ of $q$ and in local coordinates $y=(y_1,y_2,y_3,y''),$  $y''=(y_4,\ldots, y_n),$
 such that  $\Sigma_j=\{y_j=0\},$ $j=1,2,3$ and  $P(y,D)$ satisfies \eqref{loc-coord-1}, $u\in H_{\loc}^{s, k_1,k_2,k_3}(U,\{y\})$ if it satisfies
\begin{gather} 
\begin{gathered} 
\lan D_{y_1}, D_{y''} \ran ^{k_1} \lan D_{y_2}, D_{y''} \ran ^{k_2} \lan D_{y_3}, D_{y''} \ran ^{k_3} 
\vphi u \in H^s(U), \;\ \vphi\in \Coi(U).
\end{gathered}\label{defBSP}
\end{gather}
\end{enumerate}
 \end{definition}
 
 The main difficulty with working with these spaces is that they depend on the choice of the coordinates that satisfy   \eqref{cond-OS}, \eqref{cond-TS} or \eqref{defBSP}.    One should think that the correct way of defining the spaces $H_{\loc}^{s,k_1,k_2,k_3}(\Omega)$ in such that they are invariant,  have properties {\bf P.\ref{inclusion}} to
 {\bf P.\ref{CIA}} in Proposition \ref{BSP2}  and satisfy Proposition \ref{BSP3} below would be to define them as the family of $u\in H_{\loc}^s(\Omega)$ such that
 \begin{gather}
 \mcw_{23}^{k_1} \mcw_{13}^{k_2}\mcw_{12}^{k_3} \ \vphi u \in H^s(\Omega), \vphi\in \Coi(\Omega),\label{VF-BSP}
 \end{gather}
 where $\mcw_{jk}$ are the Lie algebras of vector fields defined in \eqref{123Q}.  These spaces are, in principle, smaller than the spaces defined above. This works perfectly for $k_j\in \mn,$  and there would be no need to put the operator in normal form  in the definition or to prove Proposition \ref{BSP3} below. However,  working with this invariant formulation and $k_j\in \mn,$ one would only be able to prove Theorem \ref{triple-0} for symbols in $S^{m-}$ instead of $S^m,$  and one would also need to assume that $-m-\ha\in \mn.$ This would be less than desirable. To prove Theorem \ref{triple-0}  
  one does need the spaces for $k_j\in \mr_+$ and there is no obvious way of extending the spaces \eqref{VF-BSP} to include $k_j\in \mr_+,$ and keeping the properties {\bf P.\ref{prod-clos}} and {\bf P.\ref{CIA}}. 
 
 We will show that indeed distributions which are conormal to one hypersurface and are in $H_{\loc}^{s,k_1,k_2,k_3}(U,\{y\}),$ $k_j\in \mn,$  for some choice of $\{y\},$ also satisfy  \eqref{VF-BSP} in $U.$ This in particular shows that $I^\mu(\Omega,\Sigma_j)\cap H_{\loc}^{s,k_1,k_2,k_3}(U,\{y\}),$ $k_j\in \mn,$ does not depend of the choice of $\{y\}.$  This  observation will allow us to  circumvent this difficulty  for solutions to a semilinear wave equation \eqref{Weq} and which are conormal to $\Sigma_j,$ $j=1,2,3$ for $t<-1.$  We show that the family of elements of $H_{\loc}^{s,k_1,k_2,k_3}(U,\{y\})$ which satisfy \eqref{Weq}, with $v$ conormal to $\Sigma_1\cup\Sigma_2\cup \Sigma_3$ is independent of the choice of $\{y\}.$
 
 It is implicit in our proofs that enough conormal regularity of the solution for $\{t<-1\}$ propagates to $\Omega$ which implies that, at least  for solutions of semilinear wave equations and for $k_j\in \mn,$ these definitions of $H_{\loc}^{s,k_1,k_2,k_3}(U'\{y\})$ coincide.

First we need to establish properties of composition of functions with elements of 
$H_{\loc}^{s,k_1,k_2,k_3}(U,\{y\})$ and we collect them in the following 
\begin{prop}\label{BSP2} Let $\Sigma_j\subset \Omega,$ $j=1,2,3,$ be $C^\infty$  closed hypersurfaces intersecting transversally at $\Gamma_{jk}=\Sigma_j \cap \Sigma_k$ and at $\Gamma=\Sigma_1\cap\Sigma_2\cap \Sigma_3.$   Given a point $q\in \Omega$  and a neighborhood $U$ of $q,$ and coordinates $y$ valid in $U$ which satisfy one of the assumptions of Definition \ref{BSDEF}, the corresponding spaces $H^{s, k_1,k_2,k_3}_{\loc}(U,\{y\})$ satisfy the following properties:
\begin{enumerate}[{\bf P.1}]
\item\label{inclusion} $H_{\loc}^{s,k_1,k_2,k_3}(U,\{y\}) \subset H_{\loc}^{s',k_1',k_2',k_3'}(U,\{y\}),$ provided $s\geq s',$  $k_j\geq k_j',$ $j=1,2,3.$    In view of that we define
\begin{gather*}
H_{\loc}^{s-,k_1,k_2,k_3}(U,\{y\})=\bigcap_{\eps>0} H_{\loc}^{s-\eps,k_1,k_2,k_3}(U,\{y\}), \\
H_{\loc}^{s,\infty,k_2,k_3}(U,\{y\}) \bigcap_{k_1\in \mr_+} H_{\loc}^{s,k_1,k_2,k_3}(U,\{y\}), \text{ and similarly for other indices. }
\end{gather*}
 
\item\label{Dist} If $a_j>0$ and $a_1+a_2+a_3=1,$ then
\begin{gather}
\begin{gathered}
H_{\loc}^{s+1, k_1,k_2,k_3}(U,\{y\}) \subset H_{\loc}^{s, k_1+a_1,k_2+a_2,k_3+a_3}(U,\{y\}).
\end{gathered}\label{inclusion-0}
\end{gather}
\item \label{inc-LI} If $s\geq 0$ and $k_j>\frac{n}{6},$ then 
\begin{gather}
H_{\loc}^{s-, k_1,k_2,k_3 }(U,\{y\}) \subset L_{\loc}^{\infty}(U). \label{lin-inc}
\end{gather}
\item\label{prod-clos} If $k_j>\frac{n}{6},$ then 
$H_{\loc}^{0-, k_1,k_2,k_3 }(U,\{y\})$  is closed under multiplication, and for $\del>0$ small enough
\begin{gather}
\begin{gathered}
||\vphi u \vphi v||_{-\del,k_1,k_2,k_3} \leq C ||\vphi u||_{-\del,k_1,k_2, k_3} ||\vphi v||_{-\del,k_1,k_2, k_3}.
\end{gathered}\label{norm-prod}
\end{gather}
\item \label{CIA} If  $k_j>\frac{n}{6}+1,$ $j=1,2,3,$ $s\in \mn,$ and  $f(y,z)\in C^\infty(U\times \mr),$
\begin{gather*}
\text{ if } u \in H_{\loc}^{s-,k_1,k_2,k_3}(U,\{y\}), \text{ then }  f(y,u)\in H_{\loc}^{s-,k_1,k_2,k_3}(U,\{y\}), 
\end{gather*}
 Moreover, if $u$ is supported in a compact subset $K\Subset U,$ and if 
 \begin{gather*}
  ||u|||_{-\del,k_1,k_2,k_3}\leq C_\del, 
 \end{gather*}
for $\del$  small enough, and $\vphi\in C_0^\infty,$ then there exists a constant $\tilde C,$ depending on $\del,$ $f,$ $\vphi$ such that 
\begin{gather}
||\vphi f(y,u )|||_{-\del,k_1,k_2,k_3}\leq \tilde C.\label{norm-comp}
\end{gather}
\end{enumerate}
\end{prop}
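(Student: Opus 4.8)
The plan is to work entirely on the Fourier side in the fixed coordinates $\{y\}$, so that every assertion reduces to a pointwise comparison of multipliers plus a Schur-type convolution estimate. Write $w(\xi)=\lan\xi_1,\xi''\ran^{k_1}\lan\xi_2,\xi''\ran^{k_2}\lan\xi_3,\xi''\ran^{k_3}$ for the symbol in \eqref{defBSP}. Two elementary bounds drive everything: $\lan\xi_j,\xi''\ran\le\lan\xi\ran$, hence $\prod_j\lan\xi_j,\xi''\ran^{a_j}\le\lan\xi\ran^{a_1+a_2+a_3}$; and the sub-multiplicativity $\lan\xi_j,\xi''\ran\le\lan\eta_j,\eta''\ran+\lan\zeta_j,\zeta''\ran\le2\lan\eta_j,\eta''\ran\lan\zeta_j,\zeta''\ran$ for $\xi=\eta+\zeta$. (In the normal forms \eqref{cond-OS}, \eqref{cond-TS} the symbol is $\lan\xi''\ran^{k_2+k_3}\lan\xi\ran^{k_1}$, resp.\ $\lan\xi_1,\xi''\ran^{k_1}\lan\xi_2,\xi''\ran^{k_2}\lan\xi''\ran^{k_3}$, and every argument below applies with the obvious modifications.) For \textbf{P.\ref{inclusion}} and \textbf{P.\ref{Dist}}: given $\psi\in\Coi(U)$, pick $\vphi\in\Coi(U)$ with $\vphi\equiv1$ near $\supp\psi$, so $\psi u=\psi\vphi u$ and $\widehat{\psi u}=\hat\psi*\widehat{\vphi u}$; write $\widehat{\vphi u}(\zeta)=\lan\zeta\ran^{-s}w(\zeta)^{-1}\phi(\zeta)$ with $\phi\in L^2$ (this is the hypothesis $u\in H^{s,k_1,k_2,k_3}_\loc$). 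One then has to bound $\lan\xi\ran^{s'}w'(\xi)\widehat{\psi u}(\xi)$ in $L^2$, where $w'$ is the analogous symbol with $k_j'\le k_j$; by Peetre's inequality $\lan\xi\ran^{s'}w'(\xi)\leqs\lan\zeta\ran^{s'}w'(\zeta)\lan\xi-\zeta\ran^{N}$, and since $s'\le s$ and $w'\le w$ pointwise, the Schur kernel is dominated by $\lan\xi-\zeta\ran^{N}|\hat\psi(\xi-\zeta)|\in L^1(\mrn)$, so Schur's test applies. For \textbf{P.\ref{Dist}} the only change is that the target symbol is $w''(\xi)=\prod_j\lan\xi_j,\xi''\ran^{k_j+a_j}=w(\xi)\prod_j\lan\xi_j,\xi''\ran^{a_j}\le w(\xi)\lan\xi\ran$ (as $\sum a_j=1$, $a_j\ge0$), so $\lan\xi\ran^{s}w''(\xi)\le\lan\xi\ran^{s+1}w(\xi)$ and the same Schur estimate against the hypothesis $u\in H^{s+1,k_1,k_2,k_3}_\loc$ gives \eqref{inclusion-0}.

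The heart of the matter is \textbf{P.\ref{inc-LI}}, and it is the only place the threshold $k_j>\tfrac n6$ is really used. From $\|\vphi u\|_{L^\infty}\leqs\|\widehat{\vphi u}\|_{L^1}$ and Cauchy--Schwarz one gets $\|\widehat{\vphi u}\|_{L^1}\le\|w(\xi)^{-1}\lan\xi\ran^{-s+\del}\|_{L^2}\,\|w(D)\vphi u\|_{H^{s-\del}}$, so (since $s\ge0$, taking $s=0$) it suffices to prove $w(\xi)^{-1}\lan\xi\ran^{\del}\in L^2(\mrn)$ for $\del$ small. I would split $\mrn=R_0\cup R_1\cup R_2\cup R_3$ according to which of $|\xi''|,|\xi_1|,|\xi_2|,|\xi_3|$ is largest. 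On $R_0$, $\lan\xi_j,\xi''\ran\sim\lan\xi''\ran\sim\lan\xi\ran$, and integrating $\xi_1,\xi_2,\xi_3$ over a ball of radius $\sim|\xi''|$ reduces the integral to $\int_{\mr^{n-3}}\lan\xi''\ran^{3-2(k_1+k_2+k_3)+2\del}\,d\xi''$, finite iff $k_1+k_2+k_3>\tfrac n2+\del$. On $R_1$, $\lan\xi\ran\sim\lan\xi_1\ran$; integrating successively in $\xi_2$ and $\xi_3$ (using $2k_2,2k_3>1$, valid since $k_j>\tfrac n6\ge\tfrac12$), then in $\xi''$ over $|\xi''|\le|\xi_1|$, and finally in $\xi_1$, the resulting exponent is integrable once $k_1>\tfrac12$ and $k_1+k_2+k_3>\tfrac n2$ (with $\del$ small); $R_2$ and $R_3$ are symmetric. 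Since $k_j>\tfrac n6$ gives $k_1+k_2+k_3>\tfrac n2$, all four integrals converge for $\del<k_1+k_2+k_3-\tfrac n2$; in particular this also shows $H^{-\del,k_1,k_2,k_3}_\loc(U,\{y\})\subset L^\infty_\loc(U)$ for such $\del$, which is the form \eqref{lin-inc} used below.

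For \textbf{P.\ref{prod-clos}} and \textbf{P.\ref{CIA}}: by sub-multiplicativity $w(\xi)\leqs w(\eta)w(\xi-\eta)$, so on the Fourier side $|w(D)(\vphi u\,\vphi v)|$ is dominated by a convolution of $|w(D)\vphi u|$ and $|w(D)\vphi v|$; a paraproduct split then handles the low--high and high--low pieces by putting the low-frequency factor in $L^\infty$ (by \textbf{P.\ref{inc-LI}}, for $\del$ small), and the high--high piece by Cauchy--Schwarz with the same integrability proved in \textbf{P.\ref{inc-LI}}, which yields \eqref{norm-prod} and in particular makes $H^{0-,k_1,k_2,k_3}_\loc(U,\{y\})$ an algebra. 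Property \textbf{P.\ref{CIA}} then follows by induction on $s\in\mn$: the case $s=0$ is the standard Moser composition lemma for the Banach algebra $H^{0-,k_1,k_2,k_3}_\loc(U,\{y\})\cap L^\infty_\loc$ supplied by \textbf{P.\ref{prod-clos}} and \textbf{P.\ref{inc-LI}} (expand $f(y,z)$ about $z=0$ and sum, or approximate and pass to the limit); for the step $s\to s+1$, differentiate $f(y,u)$, use the chain rule, note $\p_yf$ and $\p_zf$ are again $\CI$, and observe that each first-order derivative maps $H^{s-,k_1,k_2,k_3}_\loc$ into $H^{s-1-,k_1',k_2',k_3'}_\loc$ with $k_j'\ge k_j$ (a factor $\xi_j$ is absorbed as $\xi_j\lan\xi_j,\xi''\ran^{-1}\cdot\lan\xi_j,\xi''\ran$ with the first factor bounded), hence stays in the algebra by \textbf{P.\ref{inclusion}}; the hypothesis $k_j>\tfrac n6+1$ leaves the room needed to carry out this one differentiation while keeping the $L^\infty$ controls of \textbf{P.\ref{inc-LI}} alive. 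The quantitative estimate \eqref{norm-comp} comes out of the same induction after subtracting $f(y,0)$, with constants depending only on $\del$, on $\vphi$, and on finitely many derivatives of $f$ over the compact range of $(y,u)$.

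The main obstacle is \textbf{P.\ref{inc-LI}}: the region decomposition must be organized so that, in each regime, the partial integrations in the non-maximal frequency variables do not reintroduce growth in the maximal one, and so that convergence is governed uniformly by the single inequality $k_1+k_2+k_3>\tfrac n2$ (equivalently $k_j>\tfrac n6$). Once this sharp embedding is in place, \textbf{P.\ref{prod-clos}} and \textbf{P.\ref{CIA}} are routine variants of the Beals and Rauch--Reed arguments, and \textbf{P.\ref{inclusion}}, \textbf{P.\ref{Dist}} are immediate from the Schur estimate above.
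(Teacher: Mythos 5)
Your {\bf P.\ref{inclusion}}--{\bf P.\ref{inc-LI}} are fine and are essentially the paper's own argument: the embedding {\bf P.\ref{inc-LI}} reduces, exactly as in the appendix, to the convergence of $\int W_{\vkap}(\xi)^{-2}d\xi,$ governed by $k_1+k_2+k_3>\frac{n}{2}.$ The genuine gap is in {\bf P.\ref{prod-clos}}. The low--high/high--low part of your paraproduct split assumes that for output frequency $\xi=\zeta+\eta$ with $|\zeta|\leq \ha |\eta|$ one has $w(\xi)\leqs w(\eta),$ so that the low-frequency factor may be measured in $L^\infty$ alone. For the anisotropic weight $w(\xi)=\lan\xi_1,\xi''\ran^{k_1}\lan\xi_2,\xi''\ran^{k_2}\lan\xi_3,\xi''\ran^{k_3}$ this is false: taking $\eta$ with only $\eta_1=R$ nonzero and $\zeta$ with only $\zeta_2=R/4$ nonzero gives $|\zeta|\leq\ha|\eta|$ but $w(\zeta+\eta)/w(\eta)\sim R^{k_2},$ and testing with $\widehat{u},\widehat{v}$ concentrated near $\zeta,\eta$ shows that the bound $\|uv\|_{-\del,k_1,k_2,k_3}\leqs\|u\|_{L^\infty}\|v\|_{-\del,k_1,k_2,k_3}$ itself fails, not merely its proof: a perturbation small relative to $|\eta|$ need not be small relative to each block $\lan\eta_j,\eta''\ran.$ The split has to be made block by block: for each $j\in\{1,2,3\}$ one decides whether $|(\xi_j-\eta_j,\xi''-\eta'')|$ is $\leq$ or $>$ $\ha|(\xi_j,\xi'')|,$ giving eight regions; in each region the Schur kernel $W_{\vkap}(\xi)/\bigl(W_{\vkap}(\xi-\eta)W_{\vkap}(\eta)\bigr)$ is estimated by discarding, separately in each block, whichever factor dominates, and the pieces are then handled by the Rauch--Reed lemma ($\sup_\xi\int |K_j|^2\,d\eta<\infty$ or the symmetric condition), with the integrability again coming from $k_j>\frac{n}{6}.$ This is the paper's proof; your ``high--high by Cauchy--Schwarz'' case corresponds to the region where all three blocks of $\xi-\eta$ are comparable to those of $\xi,$ but the six mixed regions are exactly what your single dichotomy does not cover.

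Relatedly, the base case $s=0$ of {\bf P.\ref{CIA}} is under-justified: there is no off-the-shelf ``standard Moser composition lemma'' for these anisotropic spaces with real exponents, and expanding $f(y,z)$ about $z=0$ and summing requires $f$ to be analytic. The paper first proves composition for integer exponents via the Gagliardo--Nirenberg inequality, H\"older, and the Fa\`a di Bruno expansion of $(D_{y_1},D_{y''})^{\gamma}f(y,u),$ and then climbs from integers $m_j\in(\frac{n}{6},k_j)$ to the real $k_j$ one unit at a time using the chain rule, $\p_{y_j}u\in H^{0-,k_1-1,k_2,k_3},$ and the product property {\bf P.\ref{prod-clos}}; this bootstrap is where $k_j>\frac{n}{6}+1$ enters. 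Your induction on $s$ then agrees with the paper's, but it rests on these two missing steps; once {\bf P.\ref{prod-clos}} is proved by the block-wise kernel decomposition and the $s=0$ composition is established along these lines, the remainder of your argument goes through.
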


We will prove this Proposition in an appendix at the end of the paper.  The proofs of items {\bf P.\ref{inclusion}} and {\bf P.\ref{Dist}} easily follow from the definition.   The proof of {\bf P.\ref{inc-LI}} and {\bf P.\ref{prod-clos}}  when $n=3,$ are left as exercise for the reader in \cite{Beals4}. The proof for $n>3$  is more technical and we include it here for completeness. Property {\bf P.\ref{CIA}} is not stated in \cite{Beals4}, even in the three dimensional case.  Property {\bf P.\ref{CIA}} is enough for our purposes, but it is not sharp, $s\geq 0$ and $k_j>\frac{n}{6},$ $j=1,2,3,$ should be enough.

Next we establish mapping properties for the fundamental solution of $P(y,D)$ acting on the spaces $H_{\loc}^{s,k_1,k_2,k_3}(U,\{y\}).$ The case $n=3$ was proved in \cite{SaWang1}. 
\begin{prop}\label{BSP3}     Let $\Sigma_j\subset \Omega,$ $j=1,2,3,$ be $C^\infty$  closed hypersurfaces intersecting transversally at $\Gamma_{jk}=\Sigma_j \cap \Sigma_k$ and at $\Gamma=\Sigma_1\cap\Sigma_2\cap \Sigma_3.$   Let $q\in \Omega,$  let $U$ be  
a neighborhood of $q$ and let $y$ be coordinates in $U$ which satisfy the assumptions of Definition \ref{BSDEF}. for $s\in \mr$ and $k_j\in \mr_+,$  let   $H^{s, k_1,k_2,k_3}_{\loc}(U,\{y\})$ be the corresponding spaces defined in either \eqref{cond-OS}, \eqref{cond-TS} or \eqref{defBSP}.  Suppose that $U$ is bicharacteristically convex with respect to $P(y,D).$  If  $E_+$ denotes the forward fundamental solution to $P(y,D)$ and $\vphi, \psi \in C_0^\infty(U),$  then
\begin{gather}
\psi E_+ \vphi : H^{s, k_1,k_2,k_3}(U,\{y\})\longrightarrow H^{s+1, k_1,k_2,k_3}(U,\{y\}),  \label{mappinge+}
\end{gather}
is a bounded linear operator in term of the norms given by \eqref{defBSP}.
\end{prop}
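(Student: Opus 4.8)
The plan is to reduce the estimate \eqref{mappinge+} to a commutator analysis for the fundamental solution $E_+$ against the vector fields (equivalently, the pseudodifferential operators $\lan D\ran^{k_j}$ built from subsets of coordinates) that appear in the definition of $H^{s,k_1,k_2,k_3}$. Since the case $q \notin \Sigma_1\cup\Sigma_2\cup\Sigma_3$ is covered by the standard regularity property $E_+:H^{s'}_{\loc}\to H^{s'+1}_{\loc}$ with $s' = s+k_1+k_2+k_3$, and the cases $q\in\Sigma_j\setminus(\Sigma_k\cup\Sigma_\ell)$ and $q\in(\Sigma_j\cap\Sigma_k)\setminus\Sigma_\ell$ are strictly simpler (fewer commutators, and they were handled for $n=3$ in \cite{SaWang1}), I would concentrate on the triple-point case \eqref{defBSP}, where in the normal-form coordinates of Theorem \ref{loc-coord} we have
\beqq
P(y,D) = b_{12}\p_{y_1}\p_{y_2} + b_{13}\p_{y_1}\p_{y_3} + b_{23}\p_{y_2}\p_{y_3} + \sum_{j=1,\,k=4}^n b_{jk}\p_{y_j}\p_{y_k} + \mcl. \label{PNF-restate}
\eeqq
Write $\Lambda_1 = \lan D_{y_1}, D_{y''}\ran$, $\Lambda_2 = \lan D_{y_2}, D_{y''}\ran$, $\Lambda_3 = \lan D_{y_3}, D_{y''}\ran$ (properly supported, with symbols cut off to a conic neighborhood of $U$). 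The goal is to bound $\|\Lambda_1^{k_1}\Lambda_2^{k_2}\Lambda_3^{k_3}\,\psi E_+ \vphi\, u\|_{H^{s+1}}$ by $\|\Lambda_1^{k_1}\Lambda_2^{k_2}\Lambda_3^{k_3}\vphi u\|_{H^{s}}$, for arbitrary real $k_j\ge 0$.

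The key observation, and the reason the coordinates are chosen as in \eqref{PNF-restate}, is that each $\Lambda_j$ is built from the coordinates $D_{y_i}$ with $i\neq j$ together with $D_{y''}$; consequently the principal symbol of $P$ in \eqref{PNF-restate} is, modulo one factor that is elliptic on the relevant region, "polynomial with nonnegative weight" in the symbols of the $\Lambda_j$, so that commuting $\Lambda_1^{k_1}\Lambda_2^{k_2}\Lambda_3^{k_3}$ through $P$ produces only lower-order errors that can be absorbed. Concretely I would proceed as follows. First, using that $\psi E_+ \vphi$ already gains one derivative, it suffices to commute the operator $A := \Lambda_1^{k_1}\Lambda_2^{k_2}\Lambda_3^{k_3}$ past $E_+$: write $A\psi E_+\vphi = \psi' E_+\vphi' A + \psi E_+ [P,\text{(parametrix of }A\text{)}](\cdots) + (\text{smoothing})$, or more cleanly, set $w = \psi E_+\vphi u$, so $Pw = \vphi u + [P,\psi]E_+\vphi u =: F$, and estimate $Aw$ via the energy inequality for $P$ applied to $Aw$, since $P(Aw) = APw + [P,A]w = AF + [P,A]w$. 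Second, I would show $[P,A]w$ is controlled: expanding with the Leibniz rule, each term in $[P,A]$ is a sum of compositions in which at least one factor $\Lambda_j^{k_j}$ has been "used up" by a derivative hitting a coefficient $b_{jk}$ or $\mcl$, producing an operator of order $\le k_1+k_2+k_3 + 1$ in total but critically whose symbol is still bounded by the product symbol $\lan\xi_1,\xi''\ran^{k_1}\lan\xi_2,\xi''\ran^{k_2}\lan\xi_3,\xi''\ran^{k_3}$ times $\lan\xi\ran$, because in \eqref{PNF-restate} the second-order part never involves $\p_{y_1}^2,\p_{y_2}^2,\p_{y_3}^2$ — so, e.g., $\p_{y_1}\p_{y_2}$ is order one in $\Lambda_3$ but order $0$ in $\Lambda_1$ and $\Lambda_2$ "in the dangerous directions." This is exactly the structural fact that makes the weights additive and the commutators harmless. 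Third, iterate / sum the energy estimates over a time slicing of the bicharacteristically convex set $U$ (Gronwall in the time variable provided by $t$), exactly as in the classical proof that $E_+$ gains one Sobolev derivative, to close the estimate.

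For non-integer $k_j$ the pseudodifferential calculus with symbols $\lan\xi_i,\xi''\ran^{k_j}$ handles fractional powers directly, so no interpolation is needed; one only needs the composition and commutator calculus for this (product-type) symbol class, which is standard. The main obstacle I anticipate is bookkeeping the commutator $[P,A]$ so as to verify that \emph{every} resulting term genuinely has symbol dominated by $\lan\xi\ran \cdot \lan\xi_1,\xi''\ran^{k_1}\lan\xi_2,\xi''\ran^{k_2}\lan\xi_3,\xi''\ran^{k_3}$ — this is where the precise form \eqref{PNF-restate} of $P$, in particular the absence of pure second derivatives $\p_{y_j}^2$ for $j=1,2,3$ and the fact that the only mixed terms are $\p_{y_j}\p_{y_k}$ with $\{j,k\}\subset\{1,2,3\}$ or one index $\ge 4$, must be used carefully; terms where a derivative falls on a $b_{jk}$ lose one power of the "wrong" $\Lambda$ but are then genuinely lower order and absorbable. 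A secondary technical point is the handling of the cutoffs $\psi,\vphi$ and the term $[P,\psi]E_+\vphi u$, which is supported away from $\supp\vphi\cap\supp\psi$ and is smoothing in the relevant sense; this is routine. Once the triple-point estimate is established, the cases \eqref{cond-OS} and \eqref{cond-TS} follow by the same argument with one or two of the $\Lambda_j$ replaced by a single pseudodifferential weight, or by simply citing the $n=3$ proof in \cite{SaWang1} with cosmetic modifications for the extra tangential variables $y''$.
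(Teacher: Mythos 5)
Your overall strategy (reduce to the triple-point normal form, commute a product weight through $P$, close with energy estimates) is in the spirit of the paper's commutator method, but the central absorption claim is false as stated, and the idea that repairs it is missing. You assert that every term of $[P,A]$, $A=\Lambda_1^{k_1}\Lambda_2^{k_2}\Lambda_3^{k_3}$, has symbol dominated by $\langle\xi\rangle\,\langle\xi_1,\xi''\rangle^{k_1}\langle\xi_2,\xi''\rangle^{k_2}\langle\xi_3,\xi''\rangle^{k_3}$ because $P$ in the normal form \eqref{loc-coord-1} contains no $\partial_{y_j}^2$, $j=1,2,3$. This fails precisely for the cross terms in which the commutator loss occurs in the ``third'' weight: since $A$ is a Fourier multiplier, $[P,A]=\sum_{j<k}[b_{jk},A]\,\partial_{y_j}\partial_{y_k}+\cdots$, and the piece of $[b_{23},A]$ coming from $\partial_{\xi_1}a\cdot D_{y_1}b_{23}$ is of size $\langle\xi_1,\xi''\rangle^{-1}a(\xi)$, so the corresponding term of $[P,A]$ is of size $|\xi_2\xi_3|\,\langle\xi_1,\xi''\rangle^{-1}a(\xi)$. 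In the region $|\xi_2|\sim|\xi_3|\sim R\gg|\xi_1|+|\xi''|$ this is $\sim R^{2}a(\xi)$, whereas the bound you need is $\langle\xi\rangle a(\xi)\sim R\,a(\xi)$: the term exceeds the admissible size by a full factor of $\langle\xi\rangle$, so it cannot be absorbed by the energy of $Aw$ (the same happens for $(\partial_{y_3}b_{12})\partial_{y_1}\partial_{y_2}$ and $(\partial_{y_2}b_{13})\partial_{y_1}\partial_{y_3}$). Your remark that ``terms where a derivative falls on a $b_{jk}$ lose one power of the wrong $\Lambda$ but are then genuinely lower order'' is exactly the point at which the argument breaks.

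The paper's proof supplies the missing step: strict hyperbolicity gives $b_{23}\neq0$ near $\Gamma$, so one solves $b_{23}\partial_{y_2}\partial_{y_3}=P-b_{12}\partial_{y_1}\partial_{y_2}-b_{13}\partial_{y_1}\partial_{y_3}-\sum_{j,k\geq4 \text{ or } j\le 3,k\ge 4}b_{jk}\partial_{y_j}\partial_{y_k}-\mathcal{L}$ and substitutes, obtaining $[P,\partial_{y_1}]=a_1P+\mathcal{L}_{11}\partial_{y_1}+\sum_{k\geq4}\mathcal{L}_{1k}\partial_{y_k}+\widetilde{\mathcal{L}}_1$ as in \eqref{comm-1}; every surviving second-order factor then contains a derivative that is itself part of the collection being estimated. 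Iterating with $[A,BC]=B[A,C]+[A,B]C$ one gets, for integer $k_j$, a system for the vector of derivatives $(\partial_{y_1},\partial_{y''})^{\beta_1}(\partial_{y_2},\partial_{y''})^{\beta_2}(\partial_{y_3},\partial_{y''})^{\beta_3}u$ whose principal part is $P\cdot\Id$, i.e.\ strictly hyperbolic, and the gain of one derivative follows; the real exponents $k_j\in\mr_+$ are then obtained by Stein--Weiss interpolation, as in \cite{SaWang1}. If you insert the analogous substitution in your pseudodifferential framework (rewriting the dangerous factor $\partial_{y_j}\partial_{y_k}w$ via $Pw=F$ before estimating), your energy argument could likely be repaired, but you would also need to justify the composition calculus for the product-type weights $\langle\xi_j,\xi''\rangle^{k_j}$, which are not H\"ormander $S^{k_j}_{1,0}$ symbols; the claim that ``no interpolation is needed'' therefore does not come for free, and the paper deliberately avoids this by working with genuine derivatives and interpolating.
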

\begin{proof}  We analyze the case where $q\in \Sigma_1\cap\Sigma_2\cap \Sigma_3.$ The proofs of the  other cases are very similar. We start by proving this result for $k_j\in \mn,$ $j=1,2,3.$ This  is based on a commutator method due to Bony \cite{Bony3,Bony4}, see also Melrose and Ritter \cite{MelRit}.  In this case, the operator $P(y,D)$ is given by \eqref{loc-coord-1} and then
\begin{gather*}
[P(y,D), \p_{y_1}] =  (\p_{y_1} b_{12}) \p_{y_1}\p_{y_2}+ (\p_{y_1} b_{13}) \p_{y_1}\p_{y_3}+ (\p_{y_1} b_{23}) \p_{y_2}\p_{y_3}+
\sum_{j=1, k=4}^n  (\p_{y_1} b_{jk}(y))  \p_{y_j}\p_{y_k}+\mcl^*,
\end{gather*}
where $\mcl^*$ is a differential operator of order one.   
Since $P(y,D)$ is strictly hyperbolic, $b_{23}\not=0,$ and using the formula for $P(y,D)$ given by \eqref{loc-coord-1} we can write
\begin{gather*}
(\p_{y_1} b_{23}) \p_{y_2}\p_{y_3}= \frac{\p_{y_1} b_{23}}{b_{23}}(P-  b_{12} \p_{y_1}\p_{y_2}- b_{13} \p_{y_1}\p_{y_3}-
\sum_{j=1, k=4}^n b_{jk}(y)  \p_{y_j}\p_{y_k}-\mcl),
\end{gather*}
and therefore we obtain
\begin{gather}
[P(y,D), \p_{y_1}]= a_1(y) P(y,D)+ \mcl_{11}(y,D) \p_{y_1}+ \sum_{k=4}^n \mcl_{1k} \p_{y_k}+ \wt\mcl_1. \label{comm-1}
\end{gather}
We can argue in the same way to obtain
\begin{gather}
[P(y,D), \p_{y_m}]= a_m(y) P(y,D)+ \mcl_{m1}(y,D) \p_{y_1}+ \sum_{k=4}^n \mcl_{mk} \p_{y_k}+\mcl_m , \; m=4, \ldots n.\label{comm-m}
\end{gather}
 Therefore,  if $P(y,D)u=f,$  
$\mcu_1^T=( u, \p_{y_1} u, \p_{y_4} u, \ldots, \p_{y_n} u)$ and $\mcf_1^T=(f, \p_{y_1} f, \ldots, \p_{y_n} f),$  one gets a system 
\begin{gather*}
\mcp_1 \mcu_1= \mcm_1 \mcf_1 \in H_{\loc}^{s}(\Omega),
\end{gather*}
where $\mcp_1$ is a matrix of operators with diagonal principal part $P(y,D) \Id,$ $\mcm_1$ is a matrix of $C^\infty$ functions. Since $\mcp$ is a strictly hyperbolic system, we conclude that $\mcu_1  \in H^{s+1}(\Omega).$ 

Using the fact that for any three operators (as long as the compositions are well defined),
\begin{gather}
[A, BC]= B[A,C] + [A,B]C, \label{comm-rel}
\end{gather}
we deduce from  \eqref{comm-1} and \eqref{comm-m} that, for $m\geq 4,$
\begin{gather*}
[P(y,D), \p_{y_1}\p_{y_m}]= A_{1m}(y) P +  B_{1m}(y)\p_{y_1} P + C_{1m}(y)\p_{y_m} P + \mcl_1(y,D) \p_{y_1}+ \mcl_m(y,D) \p_{y_m} + \\
\mcl_{11}(y,D) \p_{y_1}^2+\mcl_{1m}(y,D) \p_{y_1}\p_{y_m}+ \sum_{k=4}^n \mcl_{1k} \p_{y_1} \p_{y_k} + \sum_{k=4}^n \mcl_{1k} \p_{y_m} \p_{y_k}.
\end{gather*}
We can rewrite this as
\begin{gather*}
[P(y,D), \p_{y_1}\p_{y_m}]= \sum_{|\beta|=0}^1 a_{\beta}(y)(\p_{y_1},\p_{y_m})^\beta P(y,D)+ \sum_{|\beta|=0}^2 \mcl_{\beta}(y,D) (\p_{y_1},\p_{y''})^{\beta}, 
\end{gather*}
where $a_\beta\in \CI$ and $\mcl_{\beta}$  are differential operators with $\CI$ coefficients of order one.

Using \eqref{comm-rel} and induction we arrive at
\begin{gather*}
[P(y,D), (\p_{y_1}, \p_{y''})^{\alpha}]=  \sum_{|\beta|=0}^{|\alpha|-1} a_{\beta}(y) (\p_{y_1}, \p_{y''})^\beta P(y,D)+ 
\sum_{|\beta|=0}^{|\alpha|}  \mcl_{\beta} (\p_{y_1}, \p_{y''})^\beta.
\end{gather*}

Using the same argument, we have
\begin{gather*}
[P(y,D), (\p_{y_1}, \p_{y''})^{\alpha_1} (\p_{y_2}, \p_{y''})^{\alpha_2} (\p_{y_3}, \p_{y''})^{\alpha_3} ]=  \\
\sum_{j=1}^3 \sum_{|\beta_j|=0}^{|\alpha_j|-1} a_{\beta_1,\beta_2,\beta_3}(y) (\p_{y_1}, \p_{y''})^{\beta_1} (\p_{y_2}, \p_{y''})^{\beta_2} (\p_{y_3}, \p_{y''})^{\beta_3} P(y,D)+ \\
\sum_{j=1}^3 \sum_{|\beta_j|=0}^{|\alpha_j|} \mcl_{\beta_1,\beta_2,\beta_3}(y,D) (\p_{y_1}, \p_{y''})^{\beta_1} (\p_{y_2}, \p_{y''})^{\beta_2} (\p_{y_3}, \p_{y''})^{\beta_3}.
\end{gather*}

If 
\begin{gather*}
\mcu^T=( u, (\p_{y_1},\p_{y''})^{\beta_1}(\p_{y_2},\p_{y''})^{\beta_2}(\p_{y_3},\p_{y''})^{\beta_3}u) \text{ and }  
\mcf^T=( f, (\p_{y_1},\p_{y''})^{\beta_1}(\p_{y_2},\p_{y''})^{\beta_2}(\p_{y_3},\p_{y''})^{\beta_3}f),
\end{gather*}  
we obtain a system

\begin{gather*}
\mcp \mcu= \mcf,
\end{gather*}
where $\mcp$ is a square matrix of differential operators of order two, and its principal part is $P(y,D) \Id.$ So the system is strictly hyperbolic, and conclude that $\mcu \in H_{\loc}^{s+1}(\Omega).$

This proves the result for $k_j\in \mn,$ $j=1,2,3.$  The case $k_j\in \mr_+$ follows from the Stein-Weiss interpolation Theorem \cite{SteWei}, and we refer the reader to  the proof of Proposition 2.13 of  \cite{SaWang1} for more details.
\end{proof}

Now we consider the invariance of $H_{\loc}^{s,k_1,k_2,k_3}(U,\{y\})$ under change of variables.   If $U\cap \Sigma_j=\emptyset,$ $j=1,2,3,$ then 
it is well known that $H_{\loc}^{s,k_1,k_2,k_3}(U,\{y\})=H_{\loc}^{s+k_1+k_2+k_3}(U)$ is independent of the choice of coordinates.  The other cases are more subtle and we introduce the following spaces:
\begin{definition}\label{CImch}  If $U\subset \Omega$ is an open subset, we say that $u\in \mch_{\loc}^{s,k_1,k_2,k_3}(U)$ if  
 given any two sets of local coordinates $y$ and $Y$ which are defined in $U$ and satisfy the same one of the conditions of Definition \ref{BSDEF},  and $Y=\Psi^{-1}(y),$ $u\in H_{\loc}^{s,k_1,k_2,k_3}(U,\{y\})$ if and only if $\Psi^* u\in H_{\loc}^{s,k_1,k_2,k_3}(U,\{Y\}).$

 We say that $u\in \mch_{\loc}^{s,k_1,k_2,k_2}(\Omega)$ if for every open subset $U\subset \Omega$  equipped with local coordinates as in Definition \ref{BSDEF}, $\vphi u \in \mch^{s,k_1,k_2,k_3}(U)$ for all $\vphi\in C_0^\infty(U).$
\end{definition}

\begin{prop}\label{one-hyp-in} Let $\Sigma_1\subset \Omega$ be as above.  Then for any $U\subset \Omega$ which has  local coordinates valid in $U$ such that the condition C.\ref{near-one} of Definition \ref{BSDEF}  holds, 
$u\in I^{\mu}(U,\Sigma_1)\cap H_{\loc}^{s,k_1,k_2,k_3}(U,\{y\})=\mch_{\loc}^{s,k_1,k_2,k_3}(U).$  In other words, the space $u\in I^{\mu}(U,\Sigma_1)\cap H_{\loc}^{s,k_1,k_2,k_3}(U,\{y\})$ is invariant under change of variables that fix $\Sigma_1.$
\end{prop}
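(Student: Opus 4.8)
The plan is to show that, for a distribution $u$ conormal to $\Sigma_1$, membership in $H^{s,k_1,k_2,k_3}_{\loc}(U,\{y\})$ cannot ``see'' the transverse coordinates $y''$: it is equivalent to the coordinate-free condition $u\in H^{s+k_1}_{\loc}(U)$. Since $H^{s+k_1}_{\loc}(U)$ is manifestly invariant under diffeomorphisms of $U$, this immediately yields the asserted invariance; in fact only the preservation of $\Sigma_1$, and not that of the normal form \eqref{norm-form-1h} of $P$, will be used.

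Fix admissible local coordinates $y=(y_1,y'')$ on $U$, so that $\Sigma_1=\{y_1=0\}$, and fix $\vphi\in\Coi(U)$. The crux is the claim that if $u\in I^{\mu}(U,\Sigma_1)$, then for every $r\ge 0$ and every $t\in\mr$
\begin{gather*}
\langle D_{y''}\rangle^{r}(\vphi u)\in H^{t}(U)\iff \vphi u\in H^{t}(U).
\end{gather*}
The implication ``$\Rightarrow$'' holds for any distribution, since $\langle D_{y''}\rangle^{-r}$ is a pseudodifferential operator with bounded full symbol $\langle\eta''\rangle^{-r}$, hence maps $H^{t}(U)$ into itself, and $\vphi u=\langle D_{y''}\rangle^{-r}\bigl(\langle D_{y''}\rangle^{r}(\vphi u)\bigr)$. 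The content is ``$\Leftarrow$'', and here conormality enters. When $r=2m$ is an even integer it is immediate, since $\langle D_{y''}\rangle^{2m}=(1-\Delta_{y''})^{m}$ is a constant-coefficient differential operator that differentiates only in directions tangent to $\Sigma_1$, hence maps $I^{\mu'}(U,\Sigma_1)$ into itself for every $\mu'$, and $I^{\mu'}(U,\Sigma_1)$ is itself a Sobolev-type scale; the remaining $r$ then follow by interpolation, as in the proof of Proposition \ref{BSP3} via the Stein--Weiss theorem \cite{SteWei}. For a direct argument one uses the representation of Theorem 18.2.8 of \cite{HormanderV3}: modulo $\CI$, $\vphi u(y)=\int_{\mr}e^{iy_1\eta_1}b(y,\eta_1)\,d\eta_1$ with $b\in S^{\mu+\frac{n-2}{4}}(\mr^{n}\times\mr)$ compactly supported in $y$. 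Writing $\widehat{\vphi u}$ as an $\eta_1$--convolution of the $y$--Fourier transforms of the $\Coi(\mr^{n})$ functions $b(\cdot,\eta_1)$, whose seminorms grow at most polynomially in $\eta_1$, gives for every $N$
\begin{gather*}
|\widehat{\vphi u}(\eta_1,\eta'')|\leq C_{N}\,\langle\eta_1\rangle^{\mu+\frac{n-2}{4}}\,\langle\eta''\rangle^{-N};
\end{gather*}
in other words $\widehat{\vphi u}$ decays rapidly in $\eta''$, with only polynomial growth in $\eta_1$, which reflects the inclusion $\WF(u)\subset N^{*}\Sigma_1$. Splitting the integral defining $\|\langle D_{y''}\rangle^{r}(\vphi u)\|_{H^{t}}^{2}$ over $\{|\eta''|\le|\eta_1|\}$ and $\{|\eta''|>|\eta_1|\}$ and taking $N$ large, the factor $\langle\eta''\rangle^{2r}$ is absorbed by this rapid decay and the convergence is controlled by the same one-dimensional $\eta_1$--integral that governs $\|\vphi u\|_{H^{t}}^{2}$; hence the two conditions are equivalent.

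Granting the claim, by \eqref{cond-OS} we have $u\in I^{\mu}(U,\Sigma_1)\cap H^{s,k_1,k_2,k_3}_{\loc}(U,\{y\})$ if and only if $\langle D_{y''}\rangle^{k_2+k_3}(\vphi u)\in H^{s+k_1}(U)$ for all $\vphi\in\Coi(U)$, which by the claim is equivalent to $\vphi u\in H^{s+k_1}(U)$ for all $\vphi$, i.e. to $u\in H^{s+k_1}_{\loc}(U)$. Consequently, for any two admissible coordinate systems $y$ and $Y$ on $U$,
\begin{gather*}
I^{\mu}(U,\Sigma_1)\cap H^{s,k_1,k_2,k_3}_{\loc}(U,\{y\})=I^{\mu}(U,\Sigma_1)\cap H^{s+k_1}_{\loc}(U)\\
=I^{\mu}(U,\Sigma_1)\cap H^{s,k_1,k_2,k_3}_{\loc}(U,\{Y\}),
\end{gather*}
so that for $u\in I^{\mu}(U,\Sigma_1)$ membership in $H^{s,k_1,k_2,k_3}_{\loc}(U,\{y\})$ does not depend on the admissible choice of coordinates; in the notation of Definition \ref{CImch} this is exactly the equality $I^{\mu}(U,\Sigma_1)\cap H^{s,k_1,k_2,k_3}_{\loc}(U,\{y\})=\mch^{s,k_1,k_2,k_3}_{\loc}(U)$, which is the assertion.

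I expect the one genuine difficulty to be the ``$\Leftarrow$'' direction of the claim for non-integer $r=k_2+k_3$: one must show that the order-$r$ pseudodifferential operator $\langle D_{y''}\rangle^{r}$ costs no Sobolev regularity on $\vphi u$, and for this the inclusion $\WF(u)\subset N^{*}\Sigma_1$ alone does not suffice, since on the cone $|\eta''|\lesssim|\eta_1|$ the symbol $\langle\eta''\rangle^{r}$ may still be of size $\langle\eta_1\rangle^{r}$; one genuinely needs the rapid decay of $\widehat{\vphi u}$ in $\eta''$ (equivalently, that $\vphi u$ is smooth in the $y''$ directions in the conormal sense), which is what the symbol representation provides. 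The remaining points are consequences of the definitions together with a routine oscillatory-integral estimate.
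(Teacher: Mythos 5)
Your argument is correct, and its essential mechanism is the same as the paper's: both proofs use the oscillatory-integral representation of a compactly supported element of $I^{\mu}(U,\Sigma_1)$ to show that the tangential weights in \eqref{cond-OS} are invisible on conormal distributions, so that the anisotropic condition collapses to something intrinsic. The difference is in the endgame. The paper reduces \eqref{cond-OS} to the weighted $L^2$ condition $a(y'',\eta_1)(1+|\eta_1|)^{s+k_1}\in L^2$ on the left-reduced symbol and then transforms that symbol explicitly under a diffeomorphism fixing $\{y_1=0\}$ (substituting $\xi=Z_1(Y)\eta_1$ as in Theorem 18.2.9 of \cite{HormanderV3} and using $1/C\le |Z_1|\le C$), whereas you reduce to the coordinate-free statement $u\in H^{s+k_1}_{\loc}(U)$ and invoke diffeomorphism invariance of the isotropic Sobolev scale; for conormal distributions these two reductions say the same thing, so your finish is legitimate and somewhat shorter, at the price of hiding the symbol computation the paper makes explicit. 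The one point where your write-up is no more rigorous than the paper's is the asserted equivalence $\lan D_{y''}\ran^{r}(\vphi u)\in H^{t}\iff \vphi u\in H^{t}$: a splitting that uses only the pointwise bound $|\widehat{\vphi u}(\eta)|\le C_N\lan\eta_1\ran^{M}\lan\eta''\ran^{-N}$ loses an arbitrarily small power of $\lan\eta_1\ran$ on the intermediate cone $1\ll|\eta''|\le|\eta_1|$, and the clean statement really comes from the left-reduced form, where $\widehat{\vphi u}(\eta)=\mcf_{y''}a(\eta'',\eta_1)$ modulo a Schwartz error and the $\eta''$-integration localizes to $|\eta''|=O(1)$, where $\lan\eta\ran\sim\lan\eta_1\ran$. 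The paper asserts the corresponding equivalence with no more justification, and for the $H^{s-}$ spaces in which the proposition is actually applied the possible $\eps$-loss is immaterial, so I would not count this as a gap relative to the paper's own standard.
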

\begin{proof}   Let $\{y\}$ be local coordinate sin $U$ such that $\Sigma_1=\{y_1=0\}.$ A change of coordinates $Y=\Psi(y)$ in $U$ that fixes $\Sigma_1,$ must satisfy
\begin{gather}
\begin{gathered}
Y_1=y_1 X_1(y), \;\ |X_1(y)|>0, \;\ Y''= Y''(y), \;\ j=2,3,\ldots,n,  \text{ or }\\
y_1=Y_1 Z_1(y), \;\ |Z_1(Y)|>0, \;\ y''= W''(Y), \;\ j=2,3,\ldots,n,
\end{gathered}\label{eq-y1}
\end{gather}
and hence
\begin{gather}
\begin{gathered}
\p_{y_1}= (X_1+y_1\p_{y_1}X_1) \p_{Y_1}+ \p_{y_1} Y_2 \p_{Y_2}+ \p_{y_1} Y_3 \p_{Y_3}, \\
\p_{y_j}=(\frac{1}{X_1}\p_{y_j}X_1 ) Y_1\p_{Y_1}+ \sum_{k=2}^n \p_{y_j} Y_k \p_{Y_k}, \;\ j=2,3,\ldots, n.
\end{gathered}\label{PBpy}
\end{gather}
According to \eqref{cond-OS}, if $k_j\in \mn,$ $j=1,2,3,$ $u\in H_{\loc}^{s,k_1,k_2,k_3}(\Omega),$ if
\begin{gather}
\lan \p_{y_1}, \p_{y''}\ran^{k_1} \lan \p_{y''}\ran^{k_1+k_2} \vphi u \in H^{s}, \;\ \vphi\in\Coi(U). \label{COND-1}
\end{gather}

Notice that if $u \in I^{\mu}(\Omega,\Sigma_1),$ then $u \in C^\infty(\Omega\setminus \Sigma_1),$ so it is in $\mch_{\loc}^{s,k_1,k_2,k_3}(U)$ for any open subset $U\subset \Omega \setminus \Sigma_1$ and all $s, k_j\in \mr,$ $j=1,2,3,$ and in this case the choice of coordinates is irrelevant.  So we may assume that $U$ is small enough so 
that $u$ is given by an oscillatory integral

\begin{gather*}
\vphi u(y)= \int_\mr e^{i y_1 \eta_1} a(y'', \eta_1) d\eta_1, \;\ a_1\in S^{\mu+\frac{n}{4}-\ha}(\mr^{n-1}\times \mr),
\end{gather*}
with $a(y'',\eta_1)$ compactly supported in $y'',$ but then  \eqref{COND-1} is equivalent to  that
\begin{gather*}
\lan D_{y_1}\ran^{k_1} u \in H^{s}(U)  \text{ or }    a(y'',\eta_1) (1+|\eta_1|)^{k_1+s} \in L^2(\mr^n), \; \alpha \in \mn^{n-1}.
\end{gather*}

But of course, the pull-back of $\vphi u$  by $\Psi$ is a conormal distribution to $\Sigma_1$ and using \eqref{eq-y1} it would also be given by an oscillatory integral 
\begin{gather*}
\Psi^*(\vphi u)(Y)= \int_\mr e^{i Y_1 Z_1(Y) \eta_1} a(W''(Y), \eta_1) \ d\eta_1, \;\ 
\end{gather*}
If one sets $Z_1(y)\eta_1=\xi,$ then as in the proof of Theorem 18.2.9 of \cite{HormanderV3}, 
\begin{gather*}
\Psi^* u(Y)= \int_\mr e^{i Y_1 \xi} a(W''(Y), (Z_1(Y))^{-1}\xi) (Z_1(y))^{-1} \ d\xi= 
\int_\mr e^{i Y_1 \xi} b(Y'',\xi)d\xi+ \mce, \\
\text{ where } \mce \in V^\infty \text{ and } \\ 
b(Y'',\xi)\sim \sum_{j=0}^\infty \frac{1}{j!} \lan D_{Y''}, D_\xi\ran^j a(W''(0,Y''), (Z_1(0,Y''))^{-1}\xi) (Z_1(0,Y''))^{-1},
\end{gather*}
and since there exists a constant $C$ such that $\frac{1}{C} \leq |Z_1(0,y'')|\leq C,$ it follows that
\begin{gather*}
a(y'',\eta_1) (1+|\eta|)^{s+k_1} \in L^2(\mr^n) \text{ if and only if }    b(Y'',\xi) (1+|\xi|)^{s+k_1} \in L^2(\mr^n).
 \end{gather*}
 This ends the proof of Proposition \ref{one-hyp-in}.
 \end{proof}

We will need the following result about the inclusion  of distributions conormal  to a hypersurface into  
$\mch_{\loc}^{s,k_1,k_2,k_3}(\Omega):$
\begin{prop}\label{incBSP} Let  $\Omega\subset \mr^n$ be an open neighborhood of the origin and $y=(y_1,y_2,y_3,y''),$  be coordinates in $\Omega$  such that $\Sigma_j=\{ y_j=0\},$ $j=1,2,3.$   If  $v_j \in I^{\msi}(\Omega, \Sigma_j),$ $j=1,2,3$ and  $s\geq 0,$ then
\begin{gather}
\begin{gathered}
v_j \in H_{\loc}^{s}(\Omega), \; s<-m-\ha,  \; j=1,2,3,\\
 v_1\in \mch_{\loc}^{r,\ka,\infty,\infty}(\Omega), \;\ v_2\in \mch_{\loc}^{r,\infty, \ka, \infty}(\Omega) \text{ and } 
 v_3\in \mch_{\loc}^{r,\infty, \infty,\ka}(\Omega), \\ \text{ provided } \ka\geq 0, \; r\geq 0 \text{  and }  \ka+r<-m-\ha,
 \end{gathered}\label{inBSP1}
 \end{gather} 
\end{prop}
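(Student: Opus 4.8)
The plan is to reduce everything to a single Fourier–side estimate for a distribution conormal to a coordinate hyperplane, and then to propagate it to all admissible coordinate systems via Proposition~\ref{one-hyp-in}. The first assertion is immediate: by definition of the conormal spaces, $v_j\in I^{\msi}(\Omega,\Sigma_j)$ gives $v_j\in{}^\infty H^{\loc}_{-(\msi)-\novf}(\Omega)={}^\infty H^{\loc}_{-m-\ha}(\Omega)$, and since this Besov-type space embeds in $H^s_{\loc}(\Omega)$ for every $s<-m-\ha$ (cf.\ the remark following \eqref{inc-conormal}), we get $v_j\in H^s_{\loc}(\Omega)$ for $0\le s<-m-\ha$, the range being nonempty under the standing hypothesis $m<-\ha$.

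\emph{Reduction of the Beals-type assertions.} By the symmetry of the roles of $y_1,y_2,y_3$ it suffices to prove $v_1\in\mch^{r,\ka,\infty,\infty}_{\loc}(\Omega)$ when $\ka,r\ge0$ and $\ka+r<-m-\ha$. Because $v_1$ is conormal to $\Sigma_1$ alone, Proposition~\ref{one-hyp-in} — whose proof applies verbatim to the normal forms \eqref{cond-TS} and \eqref{defBSP}, since a change of coordinates fixing $\Sigma_1$ alters the symbol of a distribution conormal to $\Sigma_1$ only by a factor bounded above and below — shows that $I^{\msi}(U,\Sigma_1)\cap H^{r,k_1,k_2,k_3}_{\loc}(U,\{y\})$ is independent of the admissible coordinates $\{y\}$. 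Hence it is enough to verify, in one such system on each coordinate patch $U$, that $\lan D_{y_1},D_{y''}\ran^{\ka}\lan D_{y_2},D_{y''}\ran^{k_2}\lan D_{y_3},D_{y''}\ran^{k_3}\vphi v_1\in H^r(U)$ for all $\vphi\in\Coi(U)$ and all $k_2,k_3\ge0$ (the one- and two-surface cases are treated identically, only $\Sigma_1=\{y_1=0\}$ being used). Away from $\Sigma_1$ the distribution $v_1$ is $C^\infty$, so only a neighborhood of $\Sigma_1$ matters, and there the structure of conormal distributions (cf.\ \eqref{van-u}, \eqref{left-red}) gives $\vphi v_1=w+\mce$ with $\mce\in C^\infty$ and, writing $\tilde y=(y_2,\dots,y_n)$, $w(y)=\int_\mr e^{iy_1\eta_1}a(\tilde y,\eta_1)\,d\eta_1$ with $a\in S^m$ compactly supported in $\tilde y$. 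A Fourier transform in $y$ identifies $\hat w(\xi_1,\tilde\xi)$ (up to a constant) with the $\tilde y$–transform of $a$ at $(\tilde\xi,\xi_1)$, $\tilde\xi=(\xi_2,\xi_3,\xi'')$, so integration by parts in $\tilde y$ yields, for every $N$, the bound $|\hat w(\xi)|\le C_N\lan\xi_1\ran^m\lan\tilde\xi\ran^{-N}$; the $C^\infty$ remainder $\mce$ is harmless.

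\emph{The estimate.} Up to a harmless convolution with the Fourier transform of the cutoff, the square of the norm in question equals
\begin{gather*}
\int_{\mr^n}\lan\xi_1,\xi''\ran^{2\ka}\lan\xi_2,\xi''\ran^{2k_2}\lan\xi_3,\xi''\ran^{2k_3}\lan\xi\ran^{2r}\,|\hat w(\xi)|^2\,d\xi .
\end{gather*}
Using $\lan a+b\ran^{t}\le 2^{t/2}\lan a\ran^{t}\lan b\ran^{t}$ for $t\ge0$ together with $\lan\xi''\ran,\lan\xi_2\ran,\lan\xi_3\ran\le\lan\tilde\xi\ran$ and $\lan\xi\ran\le\lan\xi_1\ran\lan\tilde\xi\ran$, the product of weights is bounded by $C\lan\xi_1\ran^{2(\ka+r)}\lan\tilde\xi\ran^{2(\ka+k_2+k_3+r)}$, so inserting the bound for $|\hat w|$ the integral is dominated by
\begin{gather*}
C\Big(\int_\mr\lan\xi_1\ran^{2(m+\ka+r)}\,d\xi_1\Big)\Big(\int_{\mr^{n-1}}\lan\tilde\xi\ran^{2(\ka+k_2+k_3+r-N)}\,d\tilde\xi\Big).
\end{gather*}
Choosing $N$ large makes the second factor finite, and the first is finite exactly when $2(m+\ka+r)<-1$, i.e.\ $\ka+r<-m-\ha$. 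Thus $v_1\in H^{r,\ka,k_2,k_3}_{\loc}(U,\{y\})$ for all $k_2,k_3\ge0$, that is $v_1\in\mch^{r,\ka,\infty,\infty}_{\loc}(\Omega)$; permuting indices gives the claims for $v_2$ and $v_3$.

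\emph{Expected main obstacle.} The only genuinely delicate point is the coordinate independence underlying the reduction: one must know that for a distribution conormal to a single hypersurface the mixed Beals conditions are insensitive to the normalizing coordinates, in each of the normal forms of Definition~\ref{BSDEF}. This is essentially Proposition~\ref{one-hyp-in}; the extra factors $\lan D_{y_2},D_{y''}\ran^{k_2}$ and $\lan D_{y_3},D_{y''}\ran^{k_3}$ only act on the $C^\infty$ — hence rapidly decaying on the Fourier side — dependence of $v_1$ in the directions transverse to $N^*\Sigma_1$, so the argument of that proposition carries over without essential change.
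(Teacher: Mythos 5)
Your proof is correct and follows the same route the paper indicates: the paper does not write out the argument but states it is ``a very simple application of the Fourier transform'' (deferring details to Proposition 2.15 of \cite{SaWang1}), and your direct estimate $|\hat w(\xi)|\le C_N\lan\xi_1\ran^{m}\lan\tilde\xi\ran^{-N}$ followed by splitting the weights is exactly that computation, with the coordinate-invariance point correctly handled via (the argument of) Proposition \ref{one-hyp-in}.
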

The proof is a very simple application of the Fourier transform and details (for $n=3$)  can be found in the proof of Proposition 2.15 of \cite{SaWang1}.

Next we consider solutions of semilinear wave equations which are in $H_{\loc}^{s,k_1,k_2,k_3}(U,\{y\}).$
\begin{prop}\label{CIBS}  Let $q\in \Omega\cap \{t=c\},$ and let $U\subset U'\subset \Omega$ be relatively compact open subsets. Suppose that $U$ is bicharacteristically convex with respect to $P(y,D).$  Moreover, suppose that  there exist local coordinates $y$ in $U'$  such that one of the normal forms required in Definition \ref{BSDEF} holds in $U'$ and let $H^{s-,k_1,k_2,k_3}(U,\{y\})$ denote the restriction of functions $u\in H_{\loc}^{s-,k_1,k_2,k_3}(U',\{y\})$ to  $U.$

 Suppose that $u\in H_{\loc}^s(\Omega),$ $s>\novt,$ satisfies \eqref{Weq} and   $u\in H^{s-,k_1,k_2,k_3}(U \cap\{t<c\}, \{y\}),$ with $s\geq 0,$ and $k_j>\novt+3,$ $j=1,2,3,$  then $u\in H_{\loc}^{s-,k_1,k_2,k_3}(U,\{y\}).$
\end{prop}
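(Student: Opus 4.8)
The plan is to prove the statement by continuation in time, starting from the Duhamel identity \eqref{it1}, $u = v + E_+(F(y,u))$ with $F(y,z)=\mcy(y)f(y,z)\in\CI(\Omega\times\mr)$, and feeding it through the mapping properties of Propositions \ref{BSP2} and \ref{BSP3}; this is the higher-dimensional counterpart of the argument for $n=3$ in \cite{SaWang1}, see also \cite{RauRee1,Beals4}. All of the analysis takes place inside the fixed set $U'$ with its fixed coordinates $\{y\}$ and normal form, so no change of variables is involved, and I treat the case $q\in\Sigma_1\cap\Sigma_2\cap\Sigma_3$, the cases with $q$ on one or two of the hypersurfaces being entirely analogous. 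Since $s>\novt$, the solution lies in $C(\mr_t;H^{s'}_{\loc})$ for $s'$ just below $s$ and is bounded in $L^\infty_{\loc}(\overline U)$; hence on the compact range of values taken by $u$ on $\overline U$ the nonlinearity $f$ and its derivatives are bounded, which is what makes the $L^\infty$-embedding {\bf P.\ref{inc-LI}} and the composition bounds {\bf P.\ref{CIA}}, \eqref{norm-comp} usable with constants depending only on $\|u\|_{H^s_{\loc}}$ and $f$. The hypothesis $k_j>\novt+3$ exceeds the thresholds $n/6$ and $n/6+1$ there, leaving room for the derivative losses in the commutator scheme below.

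Let $c^*$ be the supremum of those $c'$ for which $u\in H^{s-,k_1,k_2,k_3}(U\cap\{t<c'\},\{y\})$; by hypothesis $c^*\geq c$, and it suffices to show $c^*$ exceeds the supremum of $t$ on $\overline U$. The continuation step is: if $c'<c^*$ then $u\in H^{s-,k_1,k_2,k_3}(U\cap\{t<c'+\delta\},\{y\})$ for some $\delta>0$. To get it, I would decompose $u$ near $\{t=c'\}$ into the part produced by the data already controlled in $\{t<c'\}$---which lies in the desired space by the known estimate together with the linear propagation estimate, equivalently because the trace of $u$ on the spacelike slice $\{t=c'\}$, transversal to each $\Sigma_j$ since the $\Sigma_j$ are characteristic, inherits the conormal and Beals regularity---and a correction supported in $\{t\geq c'\}$ which, on the thin slab $S_\delta=U\cap\{c'\leq t<c'+\delta\}$, solves a fixed point equation of the form $w\mapsto (\text{known element})+E_+\!\big(F(y,(\text{known }u)\sqcup w)\,\mathbbm{1}_{\{t\geq c'\}}\big)$. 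The commutator method of Proposition \ref{BSP3} reduces $E_+$ to an energy inequality for a strictly hyperbolic system integrated over a time interval of length $\delta$, so the operator $g\mapsto E_+(g\,\mathbbm{1}_{S_\delta})$ has operator norm tending to $0$ as $\delta\to0$ on $H^{s-,k_1,k_2,k_3}(S_\delta,\{y\})$; combined with the Lipschitz bound for $w\mapsto F(y,\cdot)$ from {\bf P.\ref{CIA}} and {\bf P.\ref{prod-clos}}, the map is a contraction on a ball of $H^{s-,k_1,k_2,k_3}(S_\delta,\{y\})$ once $\delta$ is small. By uniqueness of the $H^s_{\loc}$ solution of \eqref{Weq} its fixed point equals $u|_{S_\delta}$, so $c'+\delta\leq c^*$.

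To finish, the slab width must not collapse as $c'\uparrow c^*$. For this I would run the same commutator scheme directly on the Duhamel identity over $U\cap\{t<c'\}$ and use {\bf P.\ref{CIA}}, \eqref{norm-comp} to place the source term, obtaining an a priori inequality
\[
\|u\|_{H^{s-,k_1,k_2,k_3}(U\cap\{t<c'\})}\;\leq\;C\Big(\|v\|_{H^{s-,k_1,k_2,k_3}}+\int_{-1}^{c'}\big(1+\|u\|_{H^{s-,k_1,k_2,k_3}(U\cap\{t<\tau\})}\big)\,d\tau\Big)
\]
valid for all $c'<c^*$, with $C$ depending only on $\|u\|_{H^s_{\loc}}$, $f$ and the cutoffs, and with the data term finite by Proposition \ref{incBSP} since $s\geq0$ and the $k_j$ are large. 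Gronwall's inequality bounds $\|u\|_{H^{s-,k_1,k_2,k_3}}$ uniformly on $[c,c^*]$, so $\delta$ may be chosen uniformly, the continuation proceeds past any finite time, and $u\in H^{s-,k_1,k_2,k_3}(U,\{y\})$.

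The main obstacle is the quantitative slab estimate for $E_+$: after the vector-field commutators of Proposition \ref{BSP3} turn the problem into a strictly hyperbolic system $\mcp\mcu=\mcf$, one must check that the energy inequality really yields a gain tending to $0$ with the slab width in the full anisotropic norms, and that this survives the Stein--Weiss interpolation used to pass from $k_j\in\mn$ to $k_j\in\mr_+$. A related technical point is making the decomposition of $u$ across $\{t=c'\}$ precise---traces onto the spacelike slice, or a smooth splitting of the source in time---and keeping all constants independent of $c'$; this is where the strong hypothesis $k_j>\novt+3$, via {\bf P.\ref{inc-LI}}--{\bf P.\ref{CIA}}, is used.
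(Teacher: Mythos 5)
Your strategy (thin time slabs, a contraction for $E_+\circ F$ on each slab, continuation plus Gronwall to keep the slab width uniform) is a genuinely different route from the paper's, and it is the harder one. The paper does not re-prove local existence in the anisotropic spaces at all: since $u$ is already known to exist in $H^s_{\loc}(\Omega)$, only the extra regularity has to be propagated, and this is done with a single time cutoff followed by a bootstrap in the regularity \emph{indices} rather than in time. Concretely, one takes $\chi(t)$ equal to $1$ for $t>c-\del$ and $0$ for $t<c-2\del$, writes $P(\chi u)=\chi f(y,u)+[P,\chi]u$, absorbs the commutator term (supported where $u$ is already regular, so Proposition \ref{BSP3} applies to it directly) into a piece $v$, and is left with $P w=\chi f(y,w+\vartheta)$, $w=0$ in $t<c-2\del$, where $\vartheta$ is already in the target space. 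Ordinary Sobolev propagation gives $u\in H^{s+k_1-}(U)$ with $k_1=\min k_j$, hence $w\in H^{s-,k_1/3,k_1/3,k_1/3}$ by Property {\bf P.\ref{Dist}}; then each pass through {\bf P.\ref{CIA}} and Proposition \ref{BSP3} gains one isotropic derivative, which {\bf P.\ref{Dist}} converts into a gain of $\oth$ in each anisotropic index, and finitely many iterations reach $(k_1,k_2,k_3)$. No smallness, no contraction, no Gronwall.

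By contrast, your scheme rests on two quantitative ingredients that are not available in the paper and that you flag but do not supply. First, the smallness of $g\mapsto E_+(g\,\one_{S_\del})$ as $\del\to0$ on $H^{s-,k_1,k_2,k_3}(S_\del,\{y\})$: Proposition \ref{BSP3} is purely qualitative, and one would have to redo the commutator/energy argument with explicit constants and check that the $O(\del)$ factor survives the Stein--Weiss interpolation used for $k_j\in\mr_+$. Second, your Gronwall inequality needs a Moser-type (tame) composition estimate $\|F(y,u)\|_{s-,k_1,k_2,k_3}\leq C(\|u\|_{L^\infty})(1+\|u\|_{s-,k_1,k_2,k_3})$, which is strictly stronger than the bound \eqref{norm-comp} of Property {\bf P.\ref{CIA}}; without linearity in the top norm, Gronwall does not close and the slab width could collapse as $c'\uparrow c^*$. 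There is also the matter of the ``known element'' across $\{t=c'\}$: taking traces of anisotropic Beals-space functions on a spacelike slice is delicate and is exactly what the paper's cutoff-and-commutator device is designed to avoid (your parenthetical ``smooth splitting of the source in time'' is in fact the paper's argument, and if you adopt it you will find the contraction and Gronwall machinery become unnecessary). As written, these are genuine gaps in your route; the conclusion is nonetheless correct, and the repair is to replace the local-existence scheme by the paper's index bootstrap.
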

\begin{proof}  Let $\chi \in \CI(\mr),$ be such that $\chi(t)=1,$ $t>c-\del$ and $\chi=0$ for $t<c-2\del,$ then
\begin{gather*}
P(y,D) (\chi u)= \chi f(y,u) + [P(y,D),\chi] u.
\end{gather*}
Notice that $[P(y,D),\chi] u\in H_{\loc}^{s-1-, k_1,k_2,k_3}(U)$ is supported in $t\in [c-2\del,c-\del].$  If 
\begin{gather*}
P(y,D) v= [P(y,D),\chi] u, \\
v=0, \;\ t<c-2\del,
\end{gather*}
it follows from Proposition \ref{BSP3} that $v \in H^{s-, k_1,k_2,k_3}(U).$  If one writes $u=\chi u+(1-\chi) u$ and $\chi u=v+w,$ then
\begin{gather*}
P(y,D) w= \chi f(y, w+ \vartheta), \;\ \vartheta= v+(1-\chi) u \in H^{s-, k_1,k_2,k_3}(U, \{y\}), \\
w=0, \;\ t<c-2\del.
\end{gather*}
Suppose that $k_1\leq \min\{k_2,k_3\}.$  Since at every point, $\gamma\in T^*\Omega,$ $\eta_j,$ is elliptic for some 
$j\in \{1,2,3\},$  it follows that $u\in H^{s+k_1-}(U\cap\{t<-1\},\{y\}).$ Then, since $k_1>\novt,$ standard propagation of Sobolev regularity for nonlinear wave equation shows that $u\in H^{s+k_1-}(U).$   But in view of Property \ref{Dist} in Proposition \ref{BSP2},
\begin{gather*}
w\in H^{s+k_1-}(U)\subset H^{s-, \frac{k_1}{3}, \frac{k_1}{3}, \frac{k_1}{3}}(U,\{y\}).
\end{gather*}
But $\frac{k_1}{3}>\frac{n}{6}+1,$ so in view of Property \ref{CIA}  in Proposition \ref{BSP2}, 
$\chi f(y, w+ \vartheta) \in H^{s-, \frac{k_1}{3}, \frac{k_1}{3}, \frac{k_1}{3}}(U,\{y\})$ and then by Proposition \ref{BSP3},
\begin{gather*}
w \in H^{s+1-, \frac{k_1}{3}, \frac{k_1}{3}, \frac{k_1}{3}}(U,\{y\})  \subset  H^{s-, \frac{k_1+1}{3}, \frac{k_1+1}{3}, \frac{k_1+1}{3}}(U,\{y\}).
\end{gather*}
We repeat this argument and conclude that $w \in  H^{s-, \frac{k_1+2}{3}, \frac{k_1+2}{3}, \frac{k_1+2}{3}}(U,\{y\}),$ and after finitely many steps, we conclude that $w \in  H^{s-, k_1,k_2,k_3}(U,\{y\}).$ This proves the Proposition.
\end{proof}

The following is a consequence of Proposition \ref{CIBS}:
\begin{prop}\label{CINL}   Let $q\in \{t=c\}\cap \Omega$ and let $U\subset \Omega$ be a relatively compact bicharacteristically convex neighborhood of $q.$    
  Suppose that  $u\in H^s(U),$ $s>\novt,$ satisfies 
\eqref{Weq} and  $u \in \mch^{s-,k_1,k_2,k_3}(U\cap \{t<c\}),$ $s\geq 0$ and $k_j>\novt+3,$ then
$u \in \mch^{s-,k_1,k_2,k_3}(U).$
\end{prop}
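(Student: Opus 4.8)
The plan is to deduce Proposition~\ref{CINL} from Proposition~\ref{CIBS} by a localization, the only genuinely new point being the coordinate–invariance built into the spaces $\mch$. Since $\overline{U}$ is compact, first cover it by finitely many bicharacteristically convex open sets $V_\alpha\Subset U'_\alpha\subset\Omega$, each small enough that $U'_\alpha$ carries a single coordinate system $\{y^\alpha\}$ in one of the normal forms of Theorem~\ref{loc-coord}: the case C.\ref{away} of Definition~\ref{BSDEF} when $V_\alpha\cap(\Sigma_1\cup\Sigma_2\cup\Sigma_3)=\emptyset$, the normal form \eqref{norm-form-1h} when $V_\alpha$ meets a single $\Sigma_j$ away from the $\Gamma_{jk}$, the normal form \eqref{norm-form-2h} when $V_\alpha$ meets a single $\Gamma_{jk}$ away from $\Gamma$, and the normal form \eqref{loc-coord-1} when $V_\alpha$ contains a point of $\Gamma$. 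On each $V_\alpha$ the hypothesis $u\in\mch^{s-,k_1,k_2,k_3}(U\cap\{t<c\})$ gives in particular $u\in H^{s-,k_1,k_2,k_3}(V_\alpha\cap\{t<c\},\{y^\alpha\})$, and since $V_\alpha$ is bicharacteristically convex Proposition~\ref{CIBS} yields $u\in H^{s-,k_1,k_2,k_3}(V_\alpha,\{y^\alpha\})$. A partition of unity subordinate to $\{V_\alpha\}$ will then patch these into the desired global statement, once the membership on each $V_\alpha$ is known to be independent of the choice of normal-form coordinates.

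That independence is trivial on the sets of type C.\ref{away}, where $H^{s,k_1,k_2,k_3}(V_\alpha,\{y\})=H^{s+k_1+k_2+k_3}(V_\alpha)$ is intrinsic. On the remaining sets I would use the propagation of conormal regularity: by Bony (Theorem~\ref{REGSD}) and by Melrose--Ritter, Bony, and S\'a Barreto (Theorem~\ref{REG}), $u$ is conormal, in the appropriate sense, to $\Sigma_1\cup\Sigma_2\cup\Sigma_3\cup\mcq$, and restricted to $V_\alpha$ this is conormality to $\Sigma_j$ alone (case \eqref{norm-form-1h}), to $\Sigma_j\cup\Sigma_k$ (case \eqref{norm-form-2h}), or to $\Sigma_1\cup\Sigma_2\cup\Sigma_3\cup\mcq$ (case \eqref{loc-coord-1}). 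For a single $\Sigma_j$ away from the intersections, Proposition~\ref{one-hyp-in} is exactly the required invariance; for a single $\Gamma_{jk}$, the same oscillatory-integral computation as in Proposition~\ref{one-hyp-in}, carried out with the two commuting weights $\langle D_{y_1},D_{y''}\rangle$ and $\langle D_{y_2},D_{y''}\rangle$ and with changes of variables fixing $\Sigma_1$ and $\Sigma_2$ as classified in the proof of Theorem~\ref{loc-coord}, gives it.

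The main obstacle is the case of the coordinates \eqref{loc-coord-1} near $\Gamma$: there the spaces $H^{s,k_1,k_2,k_3}(V_\alpha,\{y\})$ are genuinely degenerate at $\Gamma$ and, as noted after \eqref{VF-BSP}, for non-integer $k_j$ there is no manifestly invariant vector-field description to fall back on. I would first settle $k_j\in\mn$, where the equivalent formulation \eqref{VF-BSP} in terms of the Lie algebras $\mcw_{jk}$ of vector fields tangent to $\Sigma_j\cup\Sigma_k$ is coordinate-free; for $u$ conormal to $\Sigma_1\cup\Sigma_2\cup\Sigma_3$ this amounts to the three-fold analogue of the argument of Proposition~\ref{one-hyp-in}, using that any two systems \eqref{loc-coord-1} are related by $y_j\mapsto y_jX_j(y)$ with $X_j>0$ and $y''\mapsto W''(y)$, a change preserving each $\Sigma_j$. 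For arbitrary $k_j\in\mr_+$ one cannot argue at the level of the spaces; instead, as flagged in the paragraph following Proposition~\ref{BSP3}, one shows directly that for solutions $u$ of \eqref{Weq} with past data conormal to $\Sigma_1\cup\Sigma_2\cup\Sigma_3$ the set of such solutions lying in $H^{s-,k_1,k_2,k_3}(V_\alpha,\{y\})$ does not depend on $\{y\}$, which is precisely the assertion $u\in\mch^{s-,k_1,k_2,k_3}(V_\alpha)$. Patching with the partition of unity then completes the proof of $u\in\mch^{s-,k_1,k_2,k_3}(U)$.
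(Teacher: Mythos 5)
Your proposal diverges from the paper's argument, and at the one point where the divergence matters it has a genuine gap. The paper's proof is a short two-way application of Proposition \ref{CIBS}: fix any two coordinate systems $y$ and $Y=\Psi^{-1}(y)$ on $U$ satisfying the same condition of Definition \ref{BSDEF}. If $u\in H^{s-,k_1,k_2,k_3}(U,\{y\})$, then a fortiori $u\in H^{s-,k_1,k_2,k_3}(U\cap\{t<c\},\{y\})$; the hypothesis $u\in\mch^{s-,k_1,k_2,k_3}(U\cap\{t<c\})$ converts this into $\Psi^*u\in H^{s-,k_1,k_2,k_3}(U\cap\{t<c\},\{Y\})$; and Proposition \ref{CIBS}, applied in the $Y$-coordinates, propagates this to $\Psi^*u\in H^{s-,k_1,k_2,k_3}(U,\{Y\})$. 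The symmetric argument gives the reverse implication, and the resulting equivalence is exactly what Definition \ref{CImch} requires for $u\in\mch^{s-,k_1,k_2,k_3}(U)$. No intrinsic verification of coordinate invariance of the spaces is needed: the invariance is assumed in the past and is transported forward separately in each coordinate system.

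Your proposal instead tries to establish the invariance of $H^{s-,k_1,k_2,k_3}(V_\alpha,\{y\})$ directly. For a single hypersurface you invoke Proposition \ref{one-hyp-in}, which applies to oscillatory-integral conormal distributions in $I^\mu(U,\Sigma_j)$, not to solutions of \eqref{Weq} that merely carry iterated $\mcw_j$-regularity, so even that step is not covered by the cited result. More seriously, for the coordinates \eqref{loc-coord-1} near $\Gamma$ with $k_j\notin\mn$ you concede that one cannot argue at the level of the spaces and then assert that ``one shows directly'' that membership is coordinate-independent for solutions with conormal past data --- but that assertion is precisely the statement being proved, so the argument is circular at the decisive point. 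The missing mechanism is the one above: past invariance combined with Proposition \ref{CIBS} run in each coordinate system. The finite cover and partition of unity are also unnecessary here, since $U$ is already assumed bicharacteristically convex and Definition \ref{CImch} quantifies over coordinate systems defined on all of $U$.
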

\begin{proof}  Suppose $y$ and $Y$ are local coordinates in  $U$  satisfying the same one of the conditions of Definition \ref{BSDEF} and let $Y=\Psi^{-1}(y).$  Suppose that $u \in H^{s-,k_1,k_2,k_3}(U,\{y\}),$ then in particular 
$u \in H^{s-,k_1,k_2,k_3}(U\cap \{t<c\},\{y\}).$ But by assumption, $\Psi^* u \in H^{s-,k_1,k_2,k_3}(U\cap \{\Psi^*t<c\},\{Y\}),$ and so by 
Proposition \ref{CIBS}, $\Psi^* u \in H^{s-,k_1,k_2,k_3}(U,\{Y\}).$   The same argument shows that  if 
$\Psi^* u \in H^{s-,k_1,k_2,k_3}(U,\{Y\}),$ then  $u \in H^{s-,k_1,k_2,k_3}(U,\{y\}).$
\end{proof}

As a consequence of Proposition \ref{CINL} and Proposition \ref{one-hyp-in} we have that 
\begin{prop} \label{NL-C-INV} Let $u\in H_{\loc}^s(\Omega),$ $s>\novt,$ be a solution to \eqref{Weq}.  Let $\mu\geq 0$ and $k_j\geq \novt+3,$ and suppose that for $t<-1,$ $u\in \mch_{\loc}^{\mu-,k_1,k_2,k_3}(\Omega\cap\{t<-1\})$ and that for every $q\in \{t=-1\}$ there exists a neighborhood $U$ of $q$ such that $u\in \mch^{\mu-,k_1,k_2,k_3}(U\cap\{t<-1\}).$ Then 
$u\in \mch_{\loc}^{\mu-,k_1,k_2,k_3}(\Omega).$
\end{prop}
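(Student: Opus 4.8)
The plan is to run a continuity argument in the time variable, crossing each level surface $\{t=c\}$ by means of Proposition~\ref{CINL}. Since $t$ is a time function, $dt\neq 0$, so $t|_\Omega$ is a submersion; hence $I:=t(\Omega)$ is an open interval $(c_0,c_1)$ and $\Omega=\bigcup_{c\in I}\Omega\cap\{t<c\}$. Because $\mch^{\mu-,k_1,k_2,k_3}_{\loc}$ is defined by a local (sheaf-type) condition (Definition~\ref{CImch}), it is enough to show that
\[
A:=\{\,c\in I:\ u\in\mch^{\mu-,k_1,k_2,k_3}_{\loc}(\Omega\cap\{t<c\})\,\}
\]
is all of $I$. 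The set $A$ is downward closed, because restriction to an open subset preserves the defining condition, and it is closed under increasing limits, because $\Omega\cap\{t<c\}=\bigcup_n\Omega\cap\{t<c_n\}$ when $c_n\uparrow c$; thus $A$ is an interval, and if $\sup A<c_1$ then $\sup A\in A$. It is nonempty: the first hypothesis gives $u\in\mch_{\loc}^{\mu-,k_1,k_2,k_3}(\Omega\cap\{t<-1\})$, so every $c\le -1$ in $I$ lies in $A$; and the second hypothesis, combined with Proposition~\ref{one-hyp-in} at the points of $\{t=-1\}$ lying on a single $\Sigma_j$ (where $\mch$ coincides with an intersection of a conormal space and a weighted Sobolev space, hence is coordinate free), supplies the conormality \emph{up to} the slice $\{t=-1\}$ needed in order to apply Proposition~\ref{CINL} near that slice.

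Suppose, for contradiction, that $A\subsetneq I$. Then $c^*:=\sup A$ satisfies $c^*\in A$ and $c^*<c_1$, and the level set $\Omega\cap\{t=c^*\}$ is a nonempty $C^\infty$ hypersurface. For each $q$ in it, pick a relatively compact, bicharacteristically convex neighborhood $U_q\subset\Omega$ of $q$ on which one of the normal forms of Theorem~\ref{loc-coord} holds. Since $c^*\in A$ and $U_q$ is relatively compact, $u\in\mch^{\mu-,k_1,k_2,k_3}(U_q\cap\{t<c^*\})$, and since $u\in H_{\loc}^s(\Omega)$ with $s>\novt$, Proposition~\ref{CINL} applies and yields $u\in\mch^{\mu-,k_1,k_2,k_3}(U_q)$. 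Patching these with $u\in\mch_{\loc}^{\mu-,k_1,k_2,k_3}(\Omega\cap\{t<c^*\})$ produces an open set $W\subset\Omega$ with $\Omega\cap\{t\le c^*\}\subset W$ and $u\in\mch_{\loc}^{\mu-,k_1,k_2,k_3}(W)$. A covering argument, using the bicharacteristic convexity of $\Omega$ so that the backward domains of dependence (within $\Omega$) of points in a thin one-sided collar of $\{t=c^*\}$ are compact subsets of $W$ reachable by finitely many applications of Proposition~\ref{CINL}, then shows that $\Omega\cap\{t<c^*+\eps\}\subset W$ for some $\eps>0$. Hence $c^*+\eps\in A$ (or $c^*+\eps\ge c_1$, in which case $A=I$ already), contradicting $c^*=\sup A$. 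Therefore $A=I$, and $u\in\mch_{\loc}^{\mu-,k_1,k_2,k_3}(\Omega)$.

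The genuinely delicate point is this final covering step: upgrading the pointwise statements $u\in\mch(U_q)$ near the slice to conormality of $u$ on a full one-sided collar of $\{t=c^*\}$ of uniform width. This is exactly where bicharacteristic convexity of $\Omega$ is used — it keeps the relevant domains of dependence inside $\Omega$ and compact — and one also needs that Proposition~\ref{CINL} can be iterated over finitely many thin time-slabs covering the compact region in question. All coordinate-invariance issues have already been absorbed into the definition of $\mch$ and into Propositions~\ref{one-hyp-in} and \ref{CINL}, so nothing further is required on that front.
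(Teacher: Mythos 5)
Your overall strategy is the same as the paper's: cross the level surfaces of $t$ one at a time by applying Proposition \ref{CINL} in bicharacteristically convex neighborhoods of points on the slice, and iterate. The continuity-method packaging is fine, and the base case and the closedness of $A$ under increasing limits are fine. The gap is in the one step that actually matters: passing from ``every $q\in\Omega\cap\{t=c^*\}$ has a neighborhood $U_q$ with $u\in\mch^{\mu-,k_1,k_2,k_3}(U_q)$'' to ``$\Omega\cap\{t<c^*+\eps\}\subset W$ for some $\eps>0$.'' The level set $\Omega\cap\{t=c^*\}$ is in general \emph{not} compact, so the widths of the $U_q$ in the $t$-direction can shrink to zero as $q$ approaches $\p\Omega$, and then no uniform collar exists; indeed the inclusion you assert can simply be false for every fixed $\eps>0$. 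Bicharacteristic convexity of $\Omega$ does not repair this: it guarantees that backward domains of dependence are compact subsets of $\Omega$, but not that they lie in $W$, and for a point $q$ with $c^*<t(q)<c^*+\eps$ outside $W$ you cannot invoke Proposition \ref{CINL} at $q$ because its hypothesis requires regularity on $U\cap\{t<t(q)\}$, which already contains part of the slab $\{c^*\le t<t(q)\}$ where nothing is yet known --- the argument is circular without a uniform collar.

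The paper closes exactly this gap by using the hypothesis you never invoke: $\mcy\in C_0^\infty(\Omega)$, so the nonlinearity is supported in a compact set. One covers the compact set $\supp\mcy\cap\{t=c^*\}$ by \emph{finitely} many bicharacteristically convex neighborhoods, takes $\eps$ to be the minimum over this finite cover, and notes that away from $\supp\mcy$ the equation is linear ($Pu=0$), where propagation of the $\mch^{\mu-,k_1,k_2,k_3}$ regularity follows from the commutator/energy argument of Proposition \ref{BSP3} without any time-stepping. The same compactness shows that $t$ is bounded on $\supp\mcy$, so finitely many iterations suffice and the induction terminates. To repair your proof you should restrict the covering argument to $\supp\mcy$ and handle the complement by the linear theory; as written, the step producing the uniform $\eps$ does not go through.
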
 
\begin{proof} Let $U$ be a neighborhood of $q\in \{t=-1\}$ such that $u\in \mch_{\loc}^{\mu,k_1,k_2,k_3}(U\cap\{t<-1\}).$ 
 By shrinking $U$ if necessary, we may assume it is bicharacteristically convex.  Then Proposition \ref{CINL} shows that $u\in \mch_{\loc}^{\mu,k_1,k_2,k_3}(U).$  Since the support of $\mcy$ is compact, this shows that the result is true in $\Omega\cap \{t<-1+\eps\},$ for some $\eps.$ We repeat the argument for $q\in \{t=-1+\eps\}$ instead of $\{t=-1\}.$ Since the support of $\mcy(y)$ is compact, we prove the result by repeating this argument finitely many times.
\end{proof}
The following result will be important in the proof of Theorem \ref{triple-0}:
\begin{prop}\label{aux-pr-t0} Let $u\in H_{\loc}^s(\Omega),$ $s>\novt,$ be a solution to \eqref{Weq} and suppose that the initial data
$v=v_1+v_2+v_3,$ $v_j\in I^{m-\novf+\ha}(\Omega,\Sigma_j),$ $m<-\ha(n+7).$ Then $u\in \mch_{\loc}^{0-,-m-\ha,-m-\ha,-m-\ha}(\Omega).$
\end{prop}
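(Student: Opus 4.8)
The plan is to read the statement off from Proposition \ref{NL-C-INV}, applied with $\mu=0$ and $k_1=k_2=k_3=-m-\ha$. Two things must be checked: that these values satisfy the index hypotheses $\mu\geq 0$ and $k_j\geq\novt+3$ of that proposition, and that $u$ already belongs to $\mch_{\loc}^{0-,-m-\ha,-m-\ha,-m-\ha}$ for $t<-1$ — both globally on $\Omega\cap\{t<-1\}$ and locally near each point of $\{t=-1\}$. The first check is immediate: the hypothesis $m<-\ha(n+7)$ says $-m>\fracnt+\sha$, hence $-m-\ha>\fracnt+3$, so $k_j>\novt+3$ (strictly) and $\mu=0\geq 0$. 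This is precisely the reason the lower bound on $-m$ is imposed. (The remaining hypothesis of Proposition \ref{NL-C-INV}, that $u\in H^s_{\loc}(\Omega)$ with $s>\novt$, is part of the hypotheses of the present proposition.)

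For the second check, note that for $t<-1$ we have $\mcy\equiv 0$, so by \eqref{Weq} $u=v=v_1+v_2+v_3$ with $v_j\in I^{\msi}(\Omega,\Sigma_j)$. I claim that each $v_j$ lies in $\mch_{\loc}^{0-,-m-\ha,-m-\ha,-m-\ha}(\Omega\cap\{t<-1\})$. Fix any of the normal-form coordinate systems of Definition \ref{BSDEF}. The two Beals operators attached to the hypersurfaces other than $\Sigma_j$ act boundedly on $v_j$, since their symbols stay bounded on the conic set over which $\WF(v_j)\subset N^*\Sigma_j\setminus 0$ lies, while the operator attached to $\Sigma_j$, raised to a power $\ell$, turns $v_j$ into a conormal distribution to $\Sigma_j$ of order $\msi+\ell$, hence into an element of $H^s_{\loc}$ for every $s<-m-\ha-\ell$. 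Consequently $v_j\in\mch^{s,k_1,k_2,k_3}$ whenever $s+k_j<-m-\ha$, with no constraint linking $s$ to the other two indices; choosing $s=0-$ and $k_j=-m-\ha$ makes $s+k_j<-m-\ha$, so $v_j$ is in the asserted space. (This is the content of Proposition \ref{incBSP}, whose Fourier-transform proof in fact yields this sharp statement.) Since the $\mch$-spaces are linear and monotone in their indices (Proposition \ref{BSP2}), the sum $u=v$ lies in $\mch_{\loc}^{0-,-m-\ha,-m-\ha,-m-\ha}(\Omega\cap\{t<-1\})$; and the same holds with $\Omega$ replaced by any neighborhood $U$ of a point of $\{t=-1\}$, intersected with $\{t<-1\}$.

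With both checks done, Proposition \ref{NL-C-INV} applies with $\mu=0$ and $k_1=k_2=k_3=-m-\ha$ and gives $u\in\mch_{\loc}^{0-,-m-\ha,-m-\ha,-m-\ha}(\Omega)$, which is the assertion. The only point requiring care is the second step: that a superposition of three distributions, each conormal to a \emph{different} hypersurface, already belongs to the \emph{full} triple-interaction space at the sharp order $-m-\ha$. This works because each $v_j$ is $C^\infty$ transversally to $\Sigma_j$, so that the two families of operators coming from the other two hypersurfaces are harmless on it, and because the loss inherent in $v_j\in H^{(-m-\ha)-}_{\loc}$ is exactly absorbed by the ``$0-$'' in the Sobolev slot — anything stronger, such as replacing $0-$ by $0$, would fail.
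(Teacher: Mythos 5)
Your proof is correct and follows essentially the same route as the paper: place each $v_j$ in the triple-interaction space for $t<-1$ via Proposition \ref{incBSP} (the paper cites Proposition \ref{one-hyp-in}, but the content used is that of Proposition \ref{incBSP}), verify that $m<-\ha(n+7)$ gives $-m-\ha>\novt+3$, and then invoke Proposition \ref{NL-C-INV} with $\mu=0$ and $k_j=-m-\ha$. Your write-up is actually slightly more careful than the paper's (which contains the typo ``$m-\ha>\novt+3$'' for ``$-m-\ha>\novt+3$'' and does not spell out why the $0-$ in the Sobolev slot absorbs the endpoint loss), but the argument is the same.
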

\begin{proof}  Since $v_j\in I^{\msi}(\Omega,\Sigma_j),$ it follows from Proposition \ref{one-hyp-in}, 
\begin{gather*}
v_1 \in \mch_{\loc}^{0-, -m-\ha, \infty,\infty}(\Omega), \;\ v_2 \in \mch_{\loc}^{0-, \infty, -m-\ha, \infty}(\Omega) \text{ and }  v_3 \in \mch_{\loc}^{0-, \infty, \infty, -m-\ha}(\Omega),
\end{gather*}
and therefore for every $q\in \{t=-1\}$ there exists a relatively compact neighborhood $U$ of $q$ such that $u\in \mch^{0-,-m-\ha,-m-\ha,-m-\ha}(U\cap\{t<-1\}).$  Since $m<-\ha(n+7),$ $m-\ha>\novt+3,$ and therefore the result follows from Proposition \ref{NL-C-INV}.
\end{proof}

We recall some results from \cite{SaWang1} about products of conormal distributions. 

\begin{prop}\label{prod-three-dist}  Let  $\Omega\subset \mr^n$ be an open neighborhood of the origin and $y=(y_1,y_2,y_3)$  be coordinates in $\Omega$  such that $\Sigma_j=\{y_j=0\},$ $j=1,2,3,$  and let  
$v_j \in I^{m-\frac{n}{4}+\ha}(\Omega, \Sigma_j),$ $j=1,2,3,$ $m<-1.$  If 
\begin{gather*}
a_1(\eta_1,y_2,y_3,y''), a_2(\eta_2,y_1, y_3,y''), a_3(\eta_3,y_1,y_2,y'')  \in  S^m(\mr \times \mr^{n-1}), \\
\text{  are respectively the principal symbols of } v_1, v_2 \text{ and } v_3
\end{gather*}
 then
\begin{gather}
\begin{gathered}
v_1 v_2 = w_{12} +  \mce_{12}, \ v_1 v_3 = w_{13} +  \mce_{13}, \text{ and }
v_2 v_3 = w_{23} +  \mce_{23},
\end{gathered}\label{prod-2}
\end{gather}
 where
 \begin{gather}
 \begin{gathered}
 w_{12}(y)= \int_{\mr} e^{i( \eta_1 y_1+ y_2\eta_2)} a_1(\eta_1,0, y_3,y'') a_2( \eta_2,0, y_3,y'')  \ d\eta_1d\eta_2, \\ \mce_{12} \in  \mch_{\loc}^{0-, -m+\ha,-m-\ha,\infty}(\Omega)+ \mch_{\loc}^{0-, -m-\ha,  -m+\ha ,\infty}(\Omega), \\
 w_{13}(y)= \int_{\mr} e^{i( \eta_1 y_1+ y_3\eta_3)} a_1(\eta_1,y_2, 0,y'') a_3( \eta_3,0, y_2,y'')  \ d\eta_1d\eta_3, \\ \mce_{13} \in  \mch_{\loc}^{0-, -m+\ha,\infty,-m-\ha}(\Omega)+ \mch_{\loc}^{0-, -m-\ha,\infty,-m+\ha}(\Omega), \\
 w_{23}(y)= \int_{\mr} e^{i( \eta_2 y_2+ y_3\eta_3)} a_2(\eta_2,y_1,0,y'') a_3(\eta_3, y_1,0,y'')  \ d\eta_2d\eta_3, \\ \mce_{23} \in  \mch_{\loc}^{0-, \infty,-m+\ha,-m-\ha}(\Omega)+ \mch_{\loc}^{0-, \infty,-m-\ha,-m+\ha}(\Omega).
 \end{gathered}\label{prod-21}
 \end{gather}
 
 The product of three distributions satisfies
 \begin{gather}
\begin{gathered}
v_1v_2v_3 = V+ \mce, \text{ where } \\
V(y)= \int_{\mr^3} e^{i(y_1\eta_1+y_2\eta_2+y_3\eta_3)} a_1(\eta_1,0, 0,y'') a_2( \eta_2,0, 0,y'')  a_3(\eta_3,0,0,y'') \ d\eta_1d\eta_2d\eta_3,  \\
\mce \in  \mch_{\loc}^{0, -m+\ha,-m-\ha,-m-\ha}(\Omega) + \mch_{\loc}^{0, -m-\ha,-m+\ha,-m-\ha}(\Omega) + \mch_{\loc}^{0, -m-\ha,-m-\ha,-m+\ha}(\Omega)
\end{gathered}\label{prod-3}
\end{gather}
\end{prop}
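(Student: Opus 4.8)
The plan is to reduce both products to multiple oscillatory integrals with product–type symbols, to peel off the leading terms $V$ and $w_{ij}$ by a finite Taylor expansion in the transverse spatial variables, and to absorb every correction term into a space $\mch_{\loc}^{0,\cdot,\cdot,\cdot}$ using Proposition \ref{incBSP} together with the inclusion and multiplicativity properties {\bf P.\ref{inclusion}}, {\bf P.\ref{Dist}} and {\bf P.\ref{prod-clos}}. The three–dimensional case of this statement is in \cite{SaWang1}; the only new feature for $n>3$ is that the extra transverse variables $y''$ now occur in all three symbols simultaneously, and they are carried passively through every step.

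First I would represent the factors. Since $v_j\in I^{m-\novf+\ha}(\Omega,\Sigma_j)$, Theorem 18.2.8 of \cite{HormanderV3} and the left reduction \eqref{left-red} give, in the coordinates of the statement,
\[
v_1(y)=\int_{\mr}e^{iy_1\eta_1}\,a_1(\eta_1,y_2,y_3,y'')\,d\eta_1+\mce_1,\qquad \mce_1\in\CI,\ a_1\in S^m,
\]
and symmetrically for $v_2,v_3$, with $a_j$ independent of $y_j$. Substituting and expanding, every term that contains a factor $\mce_j$ is a $\CI$ multiple of a product of conormal distributions to at most two of the $\Sigma$'s; since $m<-\ha(n+7)$ such a product lies in $H_{\loc}^{(-m-\ha)-}(\Omega)$, which by {\bf P.\ref{inclusion}} is contained in each of the error spaces in \eqref{prod-21} and \eqref{prod-3}. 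There remain the genuine oscillatory terms $\int e^{i(y_1\eta_1+y_2\eta_2)}a_1a_2\,d\eta_1d\eta_2$ for $v_1v_2$ and $\int e^{i(y_1\eta_1+y_2\eta_2+y_3\eta_3)}a_1a_2a_3\,d\eta_1d\eta_2d\eta_3$ for $v_1v_2v_3$, together with the two remaining pairwise products.

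Then I would expand. For the triple integral I would write the integrand $a_1(\eta_1,y_2,y_3,y'')\,a_2(\eta_2,y_1,y_3,y'')\,a_3(\eta_3,y_1,y_2,y'')$ as its value at $y_1=y_2=y_3=0$ plus $\sum_{k=1}^3 y_k G_k$, where each $G_k$ is again a product of symbols of orders $(m,m,m)$; for the pairwise products one does the same but only the variables $y_2$ (in $a_1$) and $y_1$ (in $a_2$) are sent to $0$, so that $w_{12}$ still carries $y_3$. The $y$–free term is exactly $V$ (resp.\ $w_{ij}$), which is visibly the product of the one–variable conormal distributions $\int e^{iy_j\eta_j}a_j(\eta_j,0,0,y'')\,d\eta_j$, $j=1,2,3$, hence a product–type conormal distribution with respect to $\Gamma$ (resp.\ $\Gamma_{ij}$). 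In each correction term I would integrate by parts, $y_ke^{iy_k\eta_k}=-i\,\p_{\eta_k}e^{iy_k\eta_k}$, converting the factor $y_k$ into $\p_{\eta_k}$ acting on the unique symbol depending on $\eta_k$, which drops that symbol's order from $m$ to $m-1$ (and $j$ such factors drop it to $m-j$). Iterating the expansion finitely many times makes the surviving terms carry arbitrarily high powers of the $y_k$'s, hence symbols of arbitrarily negative order, so the final remainder lies in $\mch_{\loc}$ with indices as large as we please.

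Finally I would place each term: by Proposition \ref{incBSP} a factor conormal to $\Sigma_k$ with symbol of order $m$ belongs to a space with index $-m-\ha$ in the $k$-th slot, while order $m-1$ gives index $-m+\ha$ there; {\bf P.\ref{prod-clos}} (legitimate since $m<-\ha(n+7)$ forces $-m-\ha>\frac n6$) multiplies the three factors, and {\bf P.\ref{inclusion}}, {\bf P.\ref{Dist}} are used to trade the surplus Sobolev regularity for the exact indices in the statement. The three correction terms in which a single $y_k$ has been removed — lowering the $k$-th symbol by one full order — produce precisely the three summands of $\mce$ in \eqref{prod-3} (and, for the pairwise products, the two summands of $\mce_{ij}$ in \eqref{prod-21}); all terms in which two or more $y_k$'s are removed, or the same $y_k$ more than once, land in strictly smaller spaces by {\bf P.\ref{inclusion}} and are absorbed, as is the remainder. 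The hard part will be this last step: one must extend the Fourier–transform estimate underlying Proposition \ref{incBSP}, which is stated for a single conormal distribution, to the multiple oscillatory integrals with product–type symbols occurring here, keeping uniform control of how the shared variables $y''$ interact with the three blow–down indices $k_1,k_2,k_3$; this bookkeeping — including the sharper $H^0$ rather than $H^{0-}$ regularity of $\mce$ in the triple case — is exactly what is carried out for $n=3$ in \cite{SaWang1}, whose argument goes through with $y''$ adjoined.
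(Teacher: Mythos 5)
The paper gives no proof of this proposition --- it is quoted from \cite{SaWang1} --- so I am judging your argument on its own terms. Your overall strategy (left-reduce each factor, Taylor-expand the symbols at the intersection, peel off the product-type leading term $V$ resp.\ $w_{ij}$, convert each surviving factor $y_k$ into $\p_{\eta_k}$ by integration by parts so that exactly one symbol drops from order $m$ to $m-1$, and then place each correction, which still factors as a product of single-hypersurface conormal distributions, via Proposition \ref{incBSP} and {\bf P.\ref{prod-clos}}) is the right one and is essentially the argument of \cite{SaWang1}, with $y''$ carried along passively as you say.

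Two concrete problems. First, your disposal of the cross terms containing the smooth parts $\mce_j$ is wrong as written: a product such as $\mce_1 v_2 v_3$ does lie in $H^{(-m-\ha)-}_{\loc}$, but $H^{(-m-\ha)-}_{\loc}=H^{(-m-\ha)-,0,0,0}_{\loc}$ is \emph{not} contained in $\mch_{\loc}^{0-,-m+\ha,-m-\ha,-m-\ha}$ by {\bf P.\ref{inclusion}} (that property only lets you \emph{decrease} the indices), nor by {\bf P.\ref{Dist}} (which converts one unit of Sobolev regularity into a total of one unit distributed among $k_1,k_2,k_3$, so $-m-\ha$ units of isotropic regularity cannot produce the total $-3m-\ha$ these error spaces require). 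The correct placement is anisotropic: $v_2\in\mch_{\loc}^{0-,\infty,-m-\ha,\infty}$ and $v_3\in\mch_{\loc}^{0-,\infty,\infty,-m-\ha}$ by Proposition \ref{incBSP}, hence $\mce_1 v_2 v_3\in\mch_{\loc}^{0-,\infty,-m-\ha,-m-\ha}$, which does sit inside the first error summand of \eqref{prod-3}. Second, you repeatedly invoke $m<-\ha(n+7)$, but the proposition assumes only $m<-1$; under that weaker hypothesis one only gets $-m-\ha>\ha$, so the condition $k_j>\frac n6$ needed for {\bf P.\ref{inc-LI}}--{\bf P.\ref{prod-clos}} can fail for $n\geq 4$, and one must instead estimate the explicit convolutions of the Fourier transforms directly (a Lemma \ref{Rauch}-type argument), which is what \cite{SaWang1} does and which also produces the sharp endpoint indices $-m\pm\ha$ rather than $\eps$-lossy ones. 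Neither issue is fatal for the paper's application, where $m<-\ha(n+7)$ always holds; and the $\mch^{0}$ versus $\mch^{0-}$ discrepancy in \eqref{prod-3} that you flag is best read as a typo for $0-$, consistent with how the proposition is used in the definition of $\mck^{0,m}(\Omega)$.
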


One can interpret this result in terms of distributions which are conormal to $\Gamma,$ but with product-type symbols.  Suppose $v_j,$ $j=1,2,3$ is conormal to $\Sigma_j,$ and has principal symbol $a_j\in S^m(N^* \Sigma_j, \Omega^\ha_{N^*\Sigma_j}).$ Suppose that  in local coordinates in which $\Sigma_j=\{y_j=0\},$  $a_j$ is of the form
$a_j(y_j',\eta_j) |dy_j'|^\ha |d\eta_j|^\ha$ $y_1'=(y_2,\ldots, y_n),$ $y_2'=(y_1,y_3,\ldots, y_n)$ and $y_3'=(y_1,y_2,y_4,\ldots, y_n).$  
We rephrase \eqref{prod-3} as
\begin{gather*}
v_1v_1v_3= V+ \mce, \text{ where } V \text{ is a conormal distribution to } \Gamma \text{ with a product-type principal symbol } \\
\sigma(V)= (a_1 a_2a_3)|_{\Gamma},
\end{gather*}
which in local coordinates $y=(y_1,y_2,y_3,y''),$ $y''=(y_4,\ldots, y_n),$  has the form
\begin{gather*}
\sigma(V)(y'',\eta_1,\eta_2,\eta_3)=a_1(0,0,0,y'',\eta_1)a_2(0,0,0,y'',\eta_2)a_3(0,0,0,y'',\eta_3)  |d y''|^\ha |d \eta_1 d \eta_2 d \eta_3|^\ha.
\end{gather*}

Notice that any transformation that fixes the hypersurfaces $\Sigma_j=\{y_j=0\},$ $j=1,2,3,$ is of the form
\begin{gather*}
\widetilde y= \Psi(y_1,y_2,y_3,y'')= (y_1 F_1(y), y_2 F_2(y), y_3 F_3(y), F_4(y), \ldots, F_n(y)),
\end{gather*}
and therefore, $d \widetilde{y_1}=(F_1(y)+ y_1\p_{y_1} F_1(y)) d y_1,$ and
\begin{gather*}
\p_{y_1}= (F_1(y)+ y_1 \p_{y_1} F_1(y)) \p_{\widetilde{y_1}}+ y_2 \p_{y_1} F_2(y) \p_{\widetilde{y_2}}+y_3 \p_{y_1} F_3(y) \p_{\widetilde{y_3}}
\end{gather*}
Therefore,
\begin{gather*}
(\Psi^* a_1 a_2 a_3)|_{\Gamma}= a_1(0,0, 0,\widetilde{y''},\widetilde{\eta_1}) a_2(0,0, 0,\widetilde{y''},\widetilde{\eta_2}) a_3(0,0, 0,\widetilde{y''},\widetilde{\eta_3}) |d \widetilde{y''}|^\ha|d\widetilde{\eta_1}d\widetilde{\eta_2}d\widetilde{\eta_3}|^\ha|,
\end{gather*}
and we conclude that  the restriction $a_j|_{\Gamma}$ is well defined.

\section{ The Proof of Theorem \ref{triple-0}}\label{gen-sing}

We follow the strategy of the proof of Theorem 4.1 of \cite{SaWang1}, which is in part based on the strategy of the proof of theorem 4.1 of \cite{Beals4}. The main novelty here is that we allow arbitrary $\CI$ nonlinearities $f(y,u),$ while in \cite{SaWang1} the nonlinear term $f(y,u)$ is supposed to be a polynomial in $u$ with $C^\infty$ coefficients.

First we discuss the global behavior of the solution $u$ of \eqref{Weq} and later we will analyze the microlocal behavior of $u$ near $\Gamma$ in the directions where $\lan \eta_j \ran \geqs \lan \eta \ran,$ $j=1,2,3,$ and show that new singularities will exist on $\mcq.$ 

 Recall that the initial data of \eqref{Weq} is of the form  $v=v_1+v_2+v_3,$ with $v_j \in I^{m-\novt+\ha}(\Omega,\Sigma_j)$ and 
 $m<-\ha(n+7).$    So we conclude from Proposition \ref{aux-pr-t0} that $u\in \mch_{\loc}^{0-,-m-\ha, -m-\ha, -m-\ha}(\Omega).$  
 
 We recall the formula for the expansion of the triple product $v_1v_2v_3$ given by \eqref{prod-3}, and motivated by this, we define the following spaces:
\begin{gather*}
\mck^{0,m}(\Omega)= \mch_{\loc}^{0-, -m+\ha,-m-\ha,-m-\ha}(\Omega)+ \mch_{\loc}^{0-, -m-\ha,-m+\ha,-m-\ha}(\Omega)+ \mch_{\loc}^{0-, -m-\ha,-m-\ha,-m+\ha}(\Omega), \\
\mck^{1,m}(\Omega)= \mch_{\loc}^{1-, -m+\ha,-m-\ha,-m-\ha}(\Omega)+ \mch_{\loc}^{1-, -m-\ha,-m+\ha,-m-\ha}(\Omega)+ \mch_{\loc}^{1-, -m-\ha,-m-\ha,-m+\ha}(\Omega). 
\end{gather*}
To simplify the notation, we will define, 
\begin{gather}
F(y,u(y))= \mcy(y) f(y,u(y)), \label{Not-f}
\end{gather}
where $f(y,u)$ and $\mcy(y)$ are as in Theorem \ref{triple-0}, so $F(y,u(y))$ is compactly supported and its support is contained  in $ \Omega\cap \{t\geq-1\}.$

We use the forward fundamental solution to $P(y,D),$ which we have denoted by $E_+,$ and the fact that $P(y,D)v=0,$  to write
\begin{gather}
u=v+E_+(F(y,u)),\label{u-EP}
\end{gather}
and  we use  Proposition \ref{decomp-01}  to write
\begin{gather}
\begin{gathered}
v_j=\nu_j+\mce_j, \;  \nu_j\in \ido{}^{m-\novf+\ha}(\Omega,\Sigma_j),  \; \mce_j \in \CI, \text{ and hence } 
v= \nu+ \mce, \\ \text{ where }  \mce \in C^\infty \text{ and } \nu= \nu_1+\nu_2+\nu_3, \;\ \nu_j \in \ido{}^{m-\novf+\ha}(\Omega,\Sigma_j).
\end{gathered}\label{reg-vj}
\end{gather}
To simplify the notation, we use \eqref{reg-vj} to write
\begin{gather}
u= v+E_+(F(y,u(y))= \nu+\mcw, \text{ where } \mcw= \mce + E_+( F(y,u)). \label{defmcw-0}
\end{gather}
Later we will need  the fact that since $u=\mce+\nu$ and since $\nu(0,y'')=0,$
\begin{gather}
\mcw(0,y'')=u(0,y''). \label{wof0}
\end{gather}

Since $m<-\ha(n+7),$ it follows that $-m+\ha>\novt+3,$ and we deduce from Property C.\ref{CIA} of Proposition \ref{BSP2} that 
$F(y,u)\in  \mch_{\loc}^{0-,-m-\ha, -m-\ha, -m-\ha}(\Omega).$   Then, it follows from Proposition \ref{BSP3} that
\begin{gather}
E_+( F(y,u)) \in \mch_{\loc}^{1-,-m-\ha, -m-\ha, -m-\ha}(\Omega), \label{NecReg}
\end{gather}
and so we conclude that
\begin{gather}
\mcw= \mce + E_+( F(y,u)) \in \mch_{\loc}^{1-,-m-\ha, -m-\ha, -m-\ha}(\Omega). \label{defmcw}
\end{gather}

We iterate \eqref{defmcw-0} and obtain
\begin{gather}
u = \nu+ E_+(  F(y, \nu+\mcw)) +\mce. \label{New-Regu-u-N1}
\end{gather}

Our first result separates the terms with higher order of regularity of $F(y,\nu+\mcw):$
\begin{prop}\label{Reg-Terms} Let $u,$ $F(y,u)$ be as above and let $V$ be defined in \eqref{prod-3}. Then
\begin{gather}
\begin{gathered}
F(y, \nu+\mcw)- (\p_u^{3} F)(y,\mcw) V \in \mck^{0,m}(\Omega).
\end{gathered}\label{Reg-terms1}
\end{gather}
\end{prop}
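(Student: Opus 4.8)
\emph{Plan.} The strategy is the one outlined in Section~\ref{sketch}: expand $F(y,\nu+\mcw)$ in a third--order Taylor series in $\nu$ about $\mcw$, isolate the single genuinely ``triple'' term, and show that every other contribution gains one conormal order in at least one of the three directions attached to $\Sigma_1,\Sigma_2,\Sigma_3$, which is precisely what membership in $\mck^{0,m}(\Omega)$ records. First I would write
\begin{gather*}
F(y,\nu+\mcw)=\sum_{\ell=0}^{3}\frac{1}{\ell!}(\p_u^{\ell}F)(y,\mcw)\,\nu^{\ell}+R_4, \\
R_4=\tfrac16\,\nu^{4}\int_0^1(\p_u^{4}F)(y,t\mcw+(1-t)\nu)(1-t)^3\,dt ,
\end{gather*}
and observe that, since $\mcw\in\mch_{\loc}^{1-,-m-\ha,-m-\ha,-m-\ha}(\Omega)$ by \eqref{defmcw} and $-m-\ha>\tfrac n6+1$ because $m<-\ha(n+7)$, Property~{\bf P.\ref{CIA}} makes every coefficient $(\p_u^{\ell}F)(y,\mcw)=\mcy(y)(\p_u^{\ell}f)(y,\mcw)$ a compactly supported element of $\mch_{\loc}^{1-,-m-\ha,-m-\ha,-m-\ha}(\Omega)$, in particular bounded and continuous by {\bf P.\ref{inc-LI}}.

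For the cubic term I would expand $\nu^{3}=6\,\nu_1\nu_2\nu_3+3\sum_{i\ne j}\nu_i^{2}\nu_j+\sum_j\nu_j^{3}$. Every monomial other than $\nu_1\nu_2\nu_3$ is conormal to the union of at most two of the $\Sigma_j$, hence (by Propositions~\ref{hyper-alg}, \ref{prod1} and~\ref{incBSP}) carries the index $\infty$ in the slot of the missing hypersurface; for the monomials $\nu_i^{2}\nu_j$ the squared factor is moreover conormal of order $\le m-k(m)-\novf+\ha$, far below $m-\novf+\ha$, so the needed gain of one conormal order is comfortably available after multiplying by the $\mch^{1-}$ coefficient. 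This gives $\tfrac16(\p_u^{3}F)(y,\mcw)\,\nu^{3}-(\p_u^{3}F)(y,\mcw)\,\nu_1\nu_2\nu_3\in\mck^{0,m}(\Omega)$. To pass from $\nu_1\nu_2\nu_3$ to $V$, write $v_j=\nu_j+\mce_j$ with $\mce_j\in C^{\infty}$: then $v_1v_2v_3-\nu_1\nu_2\nu_3$ is a sum of terms each carrying a smooth factor $\mce_j$ and hence conormal to at most two hypersurfaces, while \eqref{prod-3} gives $v_1v_2v_3=V+\mce$ with $\mce\in\mck^{0,m}(\Omega)$; so $\nu_1\nu_2\nu_3-V\in\mck^{0,m}(\Omega)$, and multiplying by the coefficient $(\p_u^{3}F)(y,\mcw)$ --- using its boundedness and its $\mch^{1-}$ regularity to absorb it against the product structure, as in the invariance discussion at the end of Section~\ref{products} --- keeps the result in $\mck^{0,m}(\Omega)$.

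It then remains to dispose of the low--order terms $\ell=0,1,2$ and of $R_4$. For $\ell=1,2$ each monomial in $\nu^{\ell}$ involves at most two distinct $\nu_j$, so the same ``missing direction'' argument applies; the $\ell=0$ term $F(y,\mcw)\in\mch_{\loc}^{1-,-m-\ha,-m-\ha,-m-\ha}(\Omega)$ lands in $\mck^{0,m}(\Omega)$ by trading the unit of Sobolev regularity for one conormal order through {\bf P.\ref{Dist}}. For $R_4$ I would Taylor--expand $(\p_u^{4}F)(y,t\mcw+(1-t)\nu)$ once more in $\nu$ about $t\mcw$, up to some finite order, so that $R_4$ becomes a finite sum of terms $\nu^{4+j}G_j$ with $G_j\in\mch_{\loc}^{1-,-m-\ha,-m-\ha,-m-\ha}(\Omega)$ --- the argument $t\mcw$ keeps Sobolev index $1-$, and one then integrates in the bounded parameter $t$ --- plus a final remainder; expanding the $\nu$--powers, each monomial either omits some $\nu_j$ or has all three present with one factor repeated, and in either case the extra vanishing combined with the Sobolev gain of the coefficient yields the required gain of one conormal order, exactly along the lines of the $n=3$ analysis in~\cite{SaWang1}. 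Assembling the four groups of terms yields \eqref{Reg-terms1}.

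\emph{Main obstacle.} The conceptual picture is clean, but the actual work is the bookkeeping with the anisotropic spaces $H_{\loc}^{s,k_1,k_2,k_3}(U,\{y\})$: one needs the sharp multiplication rule for these spaces with \emph{unequal} indices --- so that a product of factors conormal to different subsets of $\{\Sigma_1,\Sigma_2,\Sigma_3\}$, times a coefficient with one unit of Sobolev gain, is tracked without losing the one conormal order of smoothing --- and this must be carried out with a partition of unity separating $\Gamma$ from the double intersections $\Gamma_{jk}$ and from the open strata, together with {\bf P.\ref{Dist}}, {\bf P.\ref{prod-clos}} and Proposition~\ref{prod-three-dist}, as in~\cite{SaWang1}. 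Making the composition estimate {\bf P.\ref{CIA}} uniform in the parameter $t\in[0,1]$ inside $R_4$ is the other point requiring care.
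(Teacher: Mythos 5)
Your treatment of the third--order Taylor polynomial is essentially the paper's argument: the coefficients $(\p_u^{\ell}F)(y,\mcw)$ lie in $\mch_{\loc}^{1-,-m-\ha,-m-\ha,-m-\ha}(\Omega)$ by {\bf P.\ref{CIA}}, the monomials other than $\nu_1\nu_2\nu_3$ carry an $\infty$ in at least one slot, and Proposition \ref{prod-three-dist} converts $\nu_1\nu_2\nu_3$ into $V$ modulo $\mck^{0,m}(\Omega)$. That part is fine.

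The gap is in your treatment of $R_4$. Iterating the Taylor expansion produces coefficients $(\p_u^{4+j}F)(y,t\mcw)$ with Sobolev index $1-$, and for those terms your {\bf P.\ref{Dist}} trade works; but you are left with a \emph{final remainder} of exactly the same shape as $R_4$ itself, namely $\nu^{4+J}$ times an integral of a derivative of $F$ evaluated at a point that still involves $\nu$. That coefficient has Sobolev index only $0-$ (since $\nu$ does), so it sits in $\mch_{\loc}^{0-,-m-\ha,-m-\ha,-m-\ha}(\Omega)$ and no better, and the product rule {\bf P.\ref{prod-clos}} only yields the minimum of the indices in each slot. Hence no matter how large $J$ is --- i.e.\ no matter how conormally smooth $\nu^{4+J}$ becomes in its own slots --- the product is capped at $\mch_{\loc}^{0-,-m-\ha,-m-\ha,-m-\ha}(\Omega)$, which is \emph{not} contained in $\mck^{0,m}(\Omega)$: you never obtain the required $-m+\ha$ in any slot. (Multiplication by a factor vanishing on $\Sigma_j$ improves the order of a genuine conormal distribution, Proposition \ref{xtoalpha}, but it does not improve the index of a general element of these weighted $L^2$ spaces, so "extra vanishing" alone cannot supply the gain.) The paper closes this with Lemma \ref{TREM}: for each monomial, say $\nu_1^4(\p_u^4F)(y,\mcw+t\nu)$, one differentiates the \emph{product} in $y_1$; the chain--rule term $\nu_1^4(\p_u^5F)(\cdot)\,t\,\p_{y_1}\nu_1$ is rewritten as $\tfrac{t}{5}(\p_u^5F)(\cdot)\,\p_{y_1}\nu_1^5$, and since $\p_{y_1}\nu_1^{5}$, $\p_{y_1}\nu_1^{4}$, $\p_{y_1}\nu_2$, $\p_{y_1}\nu_3$ and $\p_{y_1}\mcw$ all lie in the base space, the whole $y_1$--derivative does too, whence the product gains one order in the first slot. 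You need this commutation mechanism (or an equivalent one) for the remainder; the iterated Taylor expansion by itself does not close the argument. Separately, note the remainder argument should be $\mcw+t\nu$ (points between $\mcw$ and $\mcw+\nu$), as in Lemma \ref{TREM}, rather than $t\mcw+(1-t)\nu$.
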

\begin{proof}  We begin by taking the Taylor expansion of order three in $u$ of $F(y,u)$ centered at $\mcw:$ 
\begin{gather}
\begin{gathered}
F(y,\mcw+\nu)= \sum_{j=0}^3 \frac{1}{j!} (\p_u^j F)(y,\mcw) \nu^j +
\frac{1}{3!}\nu^4 \int_0^1 (\p_u^4F)(y, \mcw+t\nu) (1-t)^3 dt.
\end{gathered}\label{Taylor1}
\end{gather}
 First we consider the third order Taylor polynomial
\begin{gather*}
T_3(y)=\sum_{j=0}^3 \frac{1}{j!} (\p_u^{j} F)(y,\mcw) \nu^j.
\end{gather*}

We know that
\begin{gather*}
\nu^2= \nu_1^2+ \nu_2^2+ \nu_2^2+ 2 \nu_1\nu_2+2 \nu_1\nu_3+2 \nu_2\nu_3, \\
\nu^3 =  \nu_1^3+\nu_2^3+\nu_3^3+ 3\nu_1^2 \nu_2+3\nu_1^2 \nu_3+ 3\nu_2^2 \nu_1+3\nu_2^2 \nu_3+3\nu_3^2 \nu_1+3\nu_3^2 \nu_2+
6 \nu_1\nu_2\nu_3.
\end{gather*}
Therefore we write
\begin{gather*}
T_3(y)=F(y,\mcw)+ \Theta(y) + (\p_u^3 f)(y,\mcw)\nu_1\nu_2\nu_3, \\
\Theta(y)=    (\p_u F)(y,\mcw)(\nu_1+\nu_2+\nu_3) + \ha(\p_u^2 f)(y,\mcw)(\nu_1^2+ \nu_2^2+ \nu_2^2+ 2( \nu_1\nu_2+ \nu_1\nu_3+ \nu_2\nu_3)) + \\
\frac{1}{3!} (\p_u^3 f)(y,\mcw)( \nu_1^3+\nu_2^3+\nu_3^3+ 3\nu_1^2 \nu_2+3\nu_1^2 \nu_3+ 3\nu_2^2 \nu_1+3\nu_2^2 \nu_3+3\nu_3^2 \nu_1+3\nu_3^2 \nu_2).
\end{gather*}
First we consider the terms $\nu_j,$ $\nu_i^{2},$ $\nu_j^3$ or $\nu_j\nu_k$ or $\nu_j^2\nu_k.$  It follows from Proposition \ref{BSP2} that for $\alpha_j\geq 1,$ $\alpha_j \in \mn,$
\begin{gather*}
\nu_1^{\alpha_1}  \in \mch_{\loc}^{0-, -m-\ha, \infty, \infty}(\Omega), \;\ \nu_2^{\alpha_2}  \in \mch_{\loc}^{0-, \infty, -m-\ha, \infty}(\Omega), \\
 \nu_3^{\alpha_3}  \in \mch_{\loc}^{0-, \infty, \infty, -m-\ha}(\Omega), \;\ 
\nu_1^{\alpha_1} \nu_2^{\alpha_2} \in \mch_{\loc}^{0-, -m-\ha, -m-\ha, \infty}(\Omega), \\ 
\nu_1^{\alpha_1} \nu_3^{\alpha_3} \in \mch_{\loc}^{0-, -m-\ha, \infty, -m-\ha}(\Omega), \;\ 
\nu_2^{\alpha_2} \nu_3^{\alpha_3} \in \mch_{\loc}^{0-, \infty, -m-\ha, -m-\ha}(\Omega).
\end{gather*}
In fact,  in view of Proposition \ref{prod1}, the terms with  $\alpha_j>1$ are smoother,  but we do not need this now. We know from \eqref{defmcw}  that $\mcw \in  \mch_{\loc}^{1-, -m-\ha, -m-\ha, -m-\ha}(\Omega),$ and since $m<-\novt-\sha,$ it follows from Property {\bf P.\ref{CIA}} of Proposition \ref{BSP2} that for any $j\in \mn,$
\begin{gather}
(\p_u^j F)(y,\mcw) \in  \mch_{\loc}^{1-, -m-\ha, -m-\ha, -m-\ha}(\Omega).\label{reg-PW}
\end{gather}
 and therefore from Property {\bf P.\ref{prod-clos}} of Proposition \ref{BSP2} we conclude that for any $j,$ and $\alpha_j\geq 1,$
 \begin{gather}
 \begin{gathered}
\nu_1^{\alpha_1}  (\p_u^j F)(y,\mcw) \in  \mch_{\loc}^{0-, -m-\ha, -m-\ha, -m+\ha}(\Omega), \;\ 
\nu_2^{\alpha_2}  (\p_u^j F)(y,\mcw) \in  \mch_{\loc}^{0-, -m+\ha, -m-\ha, -m-\ha}(\Omega), \\
\nu_3^{\alpha_3}  (\p_u^j F)(y,\mcw) \in  \mch_{\loc}^{0-, -m+\ha, -m-\ha, -m-\ha}(\Omega), \;\
\nu_2^{\alpha_2} \nu_3^{\alpha_3} (\p_u^j F)(y,\mcw)\in  \mch_{\loc}^{0-, -m+\frac12, -m-\ha, -m-\ha}(\Omega), \\
\nu_1^{\alpha_1} \nu_3^{\alpha_3} (\p_u^j F)(y,\mcw) \in  \mch_{\loc}^{0-, -m-\frac12, -m+\frac12, -m-\ha}(\Omega), \;\
\nu_1^{\alpha_1} \nu_2^{\alpha_2} (\p_u^j F)(y,\mcw) \in  \mch_{\loc}^{0-, -m-\frac12, -m-\ha, -m+\frac12}(\Omega),
\end{gathered}\label{singleP}
\end{gather}
So we conclude that
\begin{gather*}
T_3(y) - (\p_u^3 F)(y,\mcw)\nu_1\nu_2\nu_3 \in \mck^{0,m}(\Omega).
\end{gather*}
But, we know from Proposition \ref{prod-three-dist} that if $V$ is given by \eqref{prod-3}, then
$\nu_1\nu_2\nu_3 -V\in \mck^{0,m}(\Omega),$ and again from \eqref{reg-PW} and Property {\bf P.\ref{prod-clos}} of Proposition \ref{BSP2}  we conclude that
\begin{gather}
\begin{gathered}
T_3(y) - (\p_u^3 F)(y,u) V \in \mck^{0,m}(\Omega).
\end{gathered}\label{terms}
\end{gather}

Next we consider the fourth order remainder in the Taylor's expansion:
\begin{lemma}\label{TREM}  Under the hypotheses of Theorem \ref{triple1}, for any $t\in [0,1],$  we have
\begin{gather}
\begin{gathered}
\nu^4 (\p_u^4 F)(y,\mcw+t\nu)= \mcb_1(t)+ \mcb_2(t)+\mcb_3(t), \\
\mcb_1(t)\in \mch_{\loc}^{0-,-m+\ha,-m-\ha,-m-\ha}(\Omega),   \mcb_2(t)\in \mch_{\loc}^{0-,-m-\ha,-m+\ha,-m-\ha}(\Omega), \text{ and } \\ \mcb_3(t)\in  
 \mch_{\loc}^{0-,-m-\ha,-m-\ha,-m+\ha}(\Omega) \\
 \text{ and moreover for every } \del>0 \text{ and } \vphi \in C_0^\infty(\Omega),  \text{ there exists }  \\ \text{ a constant } C>0 \text{ such that for all } t\in [0,1], \\
 ||\vphi \mcb_1(t)||_{-\del,-m+\ha,-m-\ha,-m-\ha} \leq C\\
 ||\vphi \mcb_2(t)||_{-\del,-m-\ha,-m+\ha,-m-\ha} \leq C\\
 ||\vphi \mcb_3(t)||_{-\del,-m-\ha,-m-\ha,-m+\ha} \leq C.
 \end{gathered}\label{TREM1}
\end{gather}
\end{lemma}
\begin{proof}  We begin by expanding the term $\nu^4=(\nu_1+\nu_2+\nu_3)^4:$
\begin{gather*}
\nu^4= \nu_1^4 + \nu_2^4+\nu_3^4+ 4\nu_1^3 \nu_2+ 4\nu_1^3 \nu_3+ 4\nu_2^3 \nu_1+ 4\nu_2^3 \nu_3+  \\
4\nu_3^3 \nu_1+4\nu_3^3 \nu_2+ 12 \nu_1^2 \nu_2\nu_3+ 12 \nu_2^2 \nu_1\nu_3+ 12 \nu_3^2 \nu_1\nu_2+
6 \nu_1^2 \nu_2^2+ 6 \nu_1^2\nu_3^2+ 6\nu_2^2\nu_3^2.
\end{gather*}
We see that
\begin{gather}
\begin{gathered}
\p_{y_1} \left[ \nu_1^4   (\p_u^4 F)(y,\mcw+t\nu)\right]= (\p_{y_1}\nu_1^4) (\p_u^4 F)(y,\mcw+t\nu)+ \\
\nu_1^4 (\p_{y_1}\p_u^4 F)(y,\mcw+t\nu)+\nu_1^4 (\p_u^5 F)(y,\mcw+t\nu)(t\p_{y_1} \nu+ \p_{y_1}\mcw)=\\
 (\p_{y_1}\nu_1^4) (\p_u^4 F)(y,\mcw+t\nu)+\frac{t}{5} (\p_u^5 F)(y,\mcw+t\nu)\p_{y_1} \nu_1^5 \\
\nu_1^4 (\p_{y_1}\p_u^4 F)(y,\mcw+t\nu)+\nu_1^4 (\p_u^5 F)(y,\mcw+t\nu)(t\p_{y_1}( \nu_2+\nu_3)+ \p_{y_1}\mcw)
\end{gathered}\label{TP1}
\end{gather}

Now we appeal to Proposition \ref{prod1} to conclude that
\begin{gather}
\nu_1^j\in I^{m-(j-1)k(m) -\novf+\ha}(\Omega,\Sigma_1) \subset \mch_{\loc}^{0-, -m+(j-1)k(m)-\ha, \infty,\infty}(\Omega).
\end{gather}
Since $m<-\novt-\sha,$ $k(m)\geq 1$  and hence
\begin{gather}
\begin{gathered}
\p_{y_1} \nu_1^4 \in \mch_{\loc}^{0-, -m+3k(m)-\tha, \infty,\infty}(\Omega)\subset  \mch_{\loc}^{0-, -m+\tha, \infty,\infty}(\Omega), \\
\p_{y_1} \nu_1^3 \in \mch_{\loc}^{0-, -m+2k(m)-\tha, \infty,\infty}(\Omega)\subset  \mch_{\loc}^{0-, -m+\ha, \infty,\infty}(\Omega), \\
\p_{y_1} \nu_1^2 \in \mch_{\loc}^{0-, -m+k(m)-\tha, \infty,\infty}(\Omega)\subset  \mch_{\loc}^{0-, -m-\ha, \infty,\infty}(\Omega).
\end{gathered}\label{regder-0}
\end{gather}
But we also know that
\begin{gather}
\begin{gathered}
\p_{y_1} \nu_2 \in \mch_{\loc}^{0-, \infty, -m-\ha,\infty}(\Omega), \\
\p_{y_1} \nu_3 \in \mch_{\loc}^{0-, \infty, \infty, -m-\ha,}(\Omega).
\end{gathered}\label{TP11}
\end{gather}
Since $\nu\in \mch_{\loc}^{0-,-m-\ha,-m-\ha,-m-\ha}(\Omega)$ and  $\mcw \in \mch_{\loc}^{1-,-m-\ha,-m-\ha,-m-\ha}(\Omega),$ and $m<-\novt-\sha,$ we know from  Proposition \ref{CIA} that
\begin{gather*}
(\p_{y_1}\p_u^4 F)(y,\mcw+t\nu),  \; (\p_u^5 F)(y,\mcw+t\nu), \;  (\p_u^4 F)(y,\mcw+t\nu) \text{ and } \\ (\p_u^5 F)(y,\mcw+t\nu)
\in \mch_{\loc}^{0-,-m-\ha,-m-\ha,-m-\ha}(\Omega)
\end{gather*}
We also know that $\p_{y_1}\mcw \in  \mch_{\loc}^{0-,-m-\ha,-m-\ha,-m-\ha}(\Omega)$ and so we conclude that
\begin{gather}
\begin{gathered}
\p_{y_1} \left[\nu_1^4   (\p_u^4 F)(y,\mcw+t\nu)\right]  \in \mch_{\loc}^{0-,-m-\ha,-m-\ha,-m-\ha}(\Omega), \text{ and hence }\\
\nu_1^4   (\p_u^4 F)(y,\mcw+t\nu) \in \mch_{\loc}^{0-,-m+\ha,-m-\ha,-m-\ha}(\Omega). \
\end{gathered}\label{TA1}
\end{gather}
The same argument used with respect to $y_2$ and $y_3$ respectively shows that
\begin{gather}
\begin{gathered}
\nu_2^4   (\p_u^4 F)(y,\mcw+t\nu) \in \mch_{\loc}^{0-,-m-\ha,-m+\ha,-m-\ha}(\Omega) \text{ and }\\
\nu_3^4   (\p_u^4 F)(y,\mcw+t\nu)\in  \mch_{\loc}^{0-,-m-\ha, -m-\ha,-m+\ha}(\Omega).
\end{gathered} \label{TA2}
\end{gather}
Now we consider the terms with $\nu_1^3.$ We again write
\begin{gather*}
\p_{y_1}\left[\nu_1^3(\nu_2+\nu_3) (\p_u^4 F)(y,\mcw+t\nu)\right]=
(\p_{y_1}\nu_1^3)(\nu_2+\nu_3) (\p_u^4 F)(y,\mcw+t\nu)+ \\
\nu_1^3(\p_{y_1}(\nu_2+\nu_3) ) (\p_u^4 F)(y,\mcw+t\nu)+ 
\nu_1^3(\nu_2+\nu_3) (\p_{y_1}\p_u^4 F)(y,\mcw+t\nu)+  \\
\nu_1^3(\nu_2+\nu_3) (\p_u^5 F)(y,\mcw+t\nu)( t\p_{y_1}\nu+ \p_{y_1}\mcw)=\\
(\p_{y_1}\nu_1^3)(\nu_2+\nu_3) (\p_u^4 F)(y,\mcw+t\nu)+\frac{t}{4}(\nu_2+\nu_3) (\p_u^5 F)(y,\mcw+t\nu)( \p_{y_1}\nu_1^4)+ \\
\nu_1^3(\p_{y_1}(\nu_2+\nu_3) ) (\p_u^4 F)(y,\mcw+t\nu)+ 
\nu_1^3(\nu_2+\nu_3) (\p_{y_1}\p_u^4 F)(y,\mcw+t\nu)+  \\
\nu_1^3(\nu_2+\nu_3) (\p_u^5 F)(y,\mcw+t\nu)( t\p_{y_1}(\nu_2+\nu_3)+ \p_{y_1}\mcw).
\end{gather*}
Using \eqref{regder-0} we conclude that
\begin{gather}
\begin{gathered}
\p_{y_1}\left[\nu_1^3(\nu_2+\nu_3) (\p_u^4 F)(y,\mcw+t\nu)\right] \in \mch_{\loc}^{0-,-m-\ha,-m-\ha,-m-\ha}(\Omega) \text{ and therefore } \\
\nu_1^3(\nu_2+\nu_3) (\p_u^4 F)(y,\mcw+t\nu) \in \mch_{\loc}^{0-,-m+\ha,-m-\ha,-m-\ha}(\Omega).
\end{gathered}\label{TA3}
\end{gather}
Following this argument with respect to $y_2$ and $y_3$ we also find that
\begin{gather}
\begin{gathered}
\nu_2^3(\nu_1+\nu_3) (\p_u^4 F)(y,\mcw+t\nu) \in  \mch_{\loc}^{0-,-m-\ha,-m+\ha,-m-\ha}(\Omega) \text{ and }\\
\nu_3^3(\nu_1+\nu_2)   (\p_u^4 F)(y,\mcw+t\nu) \in  \mch_{\loc}^{0-,-m-\ha,-m-\ha,-m+\ha}(\Omega).
\end{gathered} \label{TA4}
\end{gather}
The terms in $\nu_j^2\nu_k^2,$ $j\not=k,$  and $\nu_j^2\nu_k\nu_m,$  $j\not=k,$ $j\not=m$ and $k\not=m$ can be handled in the same way and we obtain
\begin{gather*}
\begin{gathered}
\p_{y_1}\left[ \nu_1^2 \nu_2^2 (\p_u^4 F)(y,\mcw+t\nu)\right] \in \mch_{\loc}^{0-,-m-\ha,-m-\ha,-m-\ha}(\Omega), \\
\p_{y_1}\left[ \nu_1^2 \nu_3^2 (\p_u^4 F)(y,\mcw+t\nu)\right] \in \mch_{\loc}^{0-,-m-\ha,-m-\ha,-m-\ha}(\Omega), \\
\p_{y_2}\left[ \nu_2^2 \nu_3^2 (\p_u^4 F)(y,\mcw+t\nu)\right] \in \mch_{\loc}^{0-,-m-\ha,-m-\ha,-m-\ha}(\Omega), \\
\p_{y_1}\left[ \nu_1^2 \nu_2 \nu_3 (\p_u^4 F)(y,\mcw+t\nu)\right] \in \mch_{\loc}^{0-,-m-\ha,-m-\ha,-m-\ha}(\Omega), \\
\p_{y_2}\left[ \nu_2^2 \nu_1\nu_3  (\p_u^4 F)(y,\mcw+t\nu)\right] \in \mch_{\loc}^{0-,-m-\ha,-m-\ha,-m-\ha}(\Omega), \\
\p_{y_3}\left[ \nu_3^2 \nu_1\nu_2  (\p_u^4 F)(y,\mcw+t\nu)\right] \in \mch_{\loc}^{0-,-m-\ha,-m-\ha,-m-\ha}(\Omega).
\end{gathered}
\end{gather*}
The estimates in \eqref{TREM1}  follow by applying \eqref{norm-comp} and \eqref{norm-prod} at each step of the proof. This ends the proof of  Lemma \ref{TREM}.
\end{proof}

To finish the proof of the proof of Proposition \ref{Reg-Terms} one needs to show that
\begin{gather*}
\int_0^1 \nu^4 (\p_u^4 F)(y, \mcw+t\nu)(1-t)^3 dt \in \mck^{m,0}(\Omega).
 \end{gather*}
 The integral is a well defined Riemann integral, as all functions here are continuous, the only issue is the boundedness of the integral in these spaces, but this follows from the estimates in \eqref{TREM1} for each $\mcb_j(t),$ $j=1,2,3.$ 
 \end{proof}

Next we consider the regularity of the solution $u$ near $\Gamma.$  Let $\chi_j\in C_0^\infty(U_j),$ $U_j$ a small enough neighborhood of $q_j\in \Gamma,$ $1\leq j \leq N,$ and suppose that $\sum_{j=1}^N \chi_j=1$ in a neighborhood of $\Gamma$ in the support of $\mcy(y).$
It follows from Proposition \ref{prod-three-dist} that
\begin{gather*}
(1-\chi)V \in \mck^{0,m}(\Omega),
\end{gather*}
We already know that $\mcw\in \mch_{\loc}^{1-,-m-\ha,-m-\ha-m-\ha}(\Omega),$ and therefore, since $m<-\ha(\novt+7),$
$(\p_u^3F)(y,\mcw)\in  \mch_{\loc}^{1-,-m-\ha,-m-\ha-m-\ha}(\Omega).$  Since 
$\mch_{\loc}^{1-,-m-\ha,-m-\ha-m-\ha}(\Omega)$ is contained in each factor of $\mck^{1,m}(\Omega).$
\begin{gather}
(1-\sum_{j=1}^N \chi_j) V (\p_u^3F)(y,\mcw)\in \mck^{0,m}(\Omega), \label{cut-off-away}
\end{gather}
and therefore, putting this together with \eqref{Reg-terms1}, we find that
\begin{gather}
F(y,\mcw+\nu) -\sum_{j=1}^N \chi_j  (\p_u^3F)(y,\mcw) V \in \mck^{0,m}(\Omega). \label{Reg-terms10}
\end{gather}
So we need to discuss the regularity of the terms $\chi_j V (\p_u^3F)(y,\mcw)$  in the region near 
\begin{gather*}
N^*\Gamma\setminus 0=\{ y_1=y_2=y_3=0, \eta''=0\},
\end{gather*}
where $\lan \eta_j\ran\geqs \lan \eta \ran,$ $j=1,2,3,$ so  we define a pseudodifferential operator  $A(D)$ with symbol
\begin{gather}
\sigma(A)(\eta)=\psi(\frac{\eta_1}{\lan \eta \ran})\psi(\frac{\eta_2}{\lan \eta \ran})\psi(\frac{\eta_3}{\lan \eta \ran}),\label{symbol-A}
\end{gather}
where $\psi(s)\in C^\infty(\mr),$ $\psi(s)= 0$ for $|s|\leq\ha$ and $\psi(s)=1$ for $|s|\geq 1.$

We prove the following:
\begin{prop}\label{triple1}    Let  $\Sigma_j,$ $j=1,2,3$ and $\mcq$ satisfy the hypotheses of Theorem \ref{triple}.  Let $q\in \Gamma,$  let $U$ be a neighborhood of $q$ and let  $y=(y',y''),$ $y'=(y_1,y_2,y_3)$ be local coordinates in $U$  such that $\Sigma_j=\{y_j=0\},$ $j=1,2,3$ and $P(y,D)$ is given by \eqref{loc-coord-1}. Let $\chi\in C_0^\infty(U).$  Suppose that  $v=v_1+v_2+v_3,$  $v_j\in I^{m-\novf+\ha}(\Omega,\Sigma_j),$  $m<-\ha(n+7),$ $j=1,2,3.$  Let   
\begin{gather*}
a_1(y_2,y_3,y'',\eta_1), \; a_2(y_1,y_3,y''\eta_2), \; a_3(y_1,y_2,y'',\eta_3)\in S^m(\mr^{n-1}\times \mr), 
\end{gather*}  
 be the principal symbols of  $v_1,$ $v_2$ and $v_3$ respectively, and assume they are elliptic.
    Let $V$ be the principal part of the product $v_1v_2v_3$ given by \eqref{prod-3}. 
 Let  $u$ be the solution to \eqref{Weq} and let $\mcw$ be defined in \eqref{defmcw}.    If $A\in \Psi^0(\Omega)$ is defined in \eqref{symbol-A}, then
\begin{gather}
\begin{gathered}
A\chi \left[ \left[(\p_u^3 F)(y,\mcw) -(\p_u^3 F)(0,0,0,y'', u(0,0,0,y''))\right]V\right]  \in H^{-3m-\ha+\frac{r}{2}},\text{ provided }   r\in (0, 1-\frac{2}{-m-\ha}), \\
\text{ while } A\chi \left((\p_u^3 F)(0,0,0,y'', u(0,0,0,y''))V\right) \in H^{-3m-\ha-}\setminus H^{-3m-\ha}.
\end{gathered}\label{reguL}
\end{gather}
\end{prop}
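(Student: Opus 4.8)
The two assertions in \eqref{reguL} are of different character and I would establish them separately: the first says that the error incurred by replacing $(\p_u^3F)(y,\mcw(y))$ with its value $(\p_u^3F)(0,0,0,y'',u(0,0,0,y''))$ along $\Gamma$, multiplied by $V$ and microlocalized by $A$ near $N^*\Gamma$, gains a fixed positive fraction of a Sobolev derivative; the second is the sharp two-sided statement for the leading term $\bigl[(\p_u^3F)(0,0,0,y'',u(0,0,0,y''))\bigr]V$.

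\emph{The difference term.} Set $g(y)=(\p_u^3F)(y,\mcw(y))-(\p_u^3F)(0,0,0,y'',u(0,0,0,y''))$. By \eqref{wof0} we have $\mcw(0,0,0,y'')=u(0,0,0,y'')$, so $g$ vanishes identically on $\Gamma=\{y_1=y_2=y_3=0\}$. From \eqref{defmcw}, $\mcw\in\mch_{\loc}^{1-,-m-\ha,-m-\ha,-m-\ha}(\Omega)$, and since $m<-\ha(n+7)$ property {\bf P.\ref{CIA}} of Proposition \ref{BSP2} gives $(\p_u^3F)(y,\mcw)\in\mch_{\loc}^{1-,-m-\ha,-m-\ha,-m-\ha}(\Omega)$; the subtracted term is a smooth function of the continuous trace $u|_\Gamma$, hence at least as regular, so $g\in\mch_{\loc}^{1-,-m-\ha,-m-\ha,-m-\ha}(\Omega)$. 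Because $g|_\Gamma=0$, Hadamard's lemma (Taylor expansion with integral remainder) in the variables $y'=(y_1,y_2,y_3)$ yields $g=y_1h_1+y_2h_2+y_3h_3$ with $h_j(y)=\int_0^1(\p_{y_j}g)(ty_1,ty_2,ty_3,y'')\,dt$; differentiating costs one Sobolev order, the averaging preserves the three conormal weights, and so $h_j\in\mch_{\loc}^{0-,-m-\ha,-m-\ha,-m-\ha}(\Omega)$. Hence $gV=h_1(y_1V)+h_2(y_2V)+h_3(y_3V)$, and by Proposition \ref{xtoalpha} applied to the oscillatory representation \eqref{prod-3}, each factor $y_jV$ is conormal to $\Gamma$ with its order improved by one. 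I would then estimate the three products $h_j(y_jV)$ by the multiplicativity of the $\mch$-spaces, Proposition \ref{BSP2} ({\bf P.\ref{inclusion}}--{\bf P.\ref{prod-clos}}), carefully tracking which of the three frequency slots absorbs the extra order carried by $y_j$ and performing one interpolation; this places $gV$ in a sum of spaces of the form $\mch_{\loc}^{0-,-m+\ha-\theta,-m-\ha,-m-\ha}(\Omega)$ for any admissible $\theta>0$. Applying $A\chi$: on the microsupport of $A$ from \eqref{symbol-A} one has $|\eta_1|\sim|\eta_2|\sim|\eta_3|\sim\langle\eta\rangle$, so each of the weights $\langle D_{y_l},D_{y''}\rangle$ and $\langle D_{y''}\rangle$ collapses to $\langle D\rangle$; consequently $A\chi$ maps $\mch_{\loc}^{0-,k_1,k_2,k_3}(\Omega)$ into $H^{k_1+k_2+k_3-}$, and the previous membership turns into the first inclusion of \eqref{reguL}, with $\theta$ matched to $r$ so that $r$ runs over $\bigl(0,1-\tfrac{2}{-m-\ha}\bigr)$.

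\emph{The leading term.} Write $b(y'')=(\p_u^3F)(0,0,0,y'',u(0,0,0,y''))$, a continuous function of $y''$ (the distributions occurring here are continuous, as noted before Theorem \ref{triple-0}). Since $b$ depends only on the variables transverse to the fibre of $N^*\Gamma$, multiplying it against the $\Gamma$-conormal distribution $V$ of \eqref{prod-3} only affects the $\eta''$-directions and does not change the conormal order along $\Gamma$: $bV$ is again conormal to $\Gamma$ with principal symbol $b(y'')\,a_1(0,0,0,y'',\eta_1)a_2(0,0,0,y'',\eta_2)a_3(0,0,0,y'',\eta_3)$. After the cutoff $A$ this is an honest symbol of order $3m$, elliptic on the cone $|\eta_l|\sim\langle\eta\rangle$ wherever $b\ne0$, by the ellipticity hypothesis on the $a_j$. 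A direct computation on the Fourier side with this representation — using the precise size $\langle\eta\rangle^{3m}$ of the amplitude on the cone together with rapid decay in $\eta''$ — yields $A\chi(bV)\in H^{s}$ for every $s$ below the threshold displayed in \eqref{reguL}, while ellipticity rules out the endpoint over any ray on which $b$ is nonzero; this is the second statement of \eqref{reguL}.

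\textbf{Main obstacle.} The delicate point is the strictly positive gain in the first part. The crude bound — $h_j$ bounded, $y_jV$ one conormal order better than $V$ — yields no Sobolev improvement, since multiplication by $L^\infty$ functions does not raise Sobolev regularity; the gain must be extracted from the \emph{joint} conormal structure of the two factors: $h_j$ is conormal to $\Sigma_1\cup\Sigma_2\cup\Sigma_3$ with a large reservoir of conormal derivatives (its frequencies on the cone are very smooth because $N^*\Sigma_j$ is transverse to that cone), while $y_jV$ carries one extra order of vanishing at $\Gamma$. Converting this into an honest gain for $A\chi(h_j\,y_jV)$ is exactly what the $\mch$-space product estimate of Proposition \ref{BSP2} is designed for, and bookkeeping how much each of the three slots gains — together with the collapse of the slots under the cone cutoff — is what produces the precise constraint $r<1-\tfrac{2}{-m-\ha}$.
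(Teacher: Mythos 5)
Your identification of the leading term is the right one and matches the paper's: since $\mcw|_\Gamma=u|_\Gamma$ by \eqref{wof0}, the coefficient $(\p_u^3F)(y,\mcw)$ should be replaced by its trace on $\Gamma$, and the difference vanishes on $\Gamma$. But the heart of the proposition is the \emph{quantitative} gain in the first line of \eqref{reguL} --- membership in $H^{-3m-\ha+\frac r2}$ with the precise threshold $r<1-\frac{2}{-m-\ha}$ --- and your argument does not produce it. The paper obtains this from Lemma \ref{asyFT} (Proposition 4.3 of \cite{SaWang1}): writing $\mcf_{y'}\bigl(\chi(\p_u^3F)(y,\mcw)\,V\bigr)$ as the convolution $a\star b$ of the product symbol $a=a_1a_2a_3$ with $b=\mcf_{y'}(\chi(\p_u^3F)(y,\mcw))$, the lemma extracts the term $a(\eta)\int b\,d\zeta$ (which is exactly the trace on $\Gamma$) together with a remainder whose weighted $L^2$ bound on the cone $|\eta_j|\gtrsim\lan\eta\ran$ carries the exponent $-3m-\tha+\frac r2$ and the constraint $r+\frac{2}{-m-\ha}<1$. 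Your substitute --- Hadamard's lemma $g=\sum y_jh_j$ followed by the product estimate of Proposition \ref{BSP2} --- cannot close this. Property {\bf P.\ref{prod-clos}} is only the algebra bound $\|uv\|\leq C\|u\|\,\|v\|$ within a fixed space $H^{0-,k_1,k_2,k_3}$; applied to $h_j\in\mch^{0-,-m-\ha,-m-\ha,-m-\ha}$ and $y_jV$ it returns the product in $\mch^{0-,-m-\ha,-m-\ha,-m-\ha}$, hence $A\chi(h_jy_jV)\in H^{-3m-\tha-}$, which is worse than needed. Even the most optimistic mixed product estimate (not among the stated properties) would only place $h_j(y_jV)$ in $\mck^{0,m}$, whence $A\chi(\cdot)\in H^{-3m-\ha-}$ --- still short of the asserted $H^{-3m-\ha+\frac r2}$ by the entire gain $\frac r2$. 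You acknowledge this obstacle yourself, but the resolution you point to (``the $\mch$-space product estimate is designed for exactly this'') is not true of the estimates actually available in the paper; some genuinely new quantitative input, playing the role of Lemma \ref{asyFT}, is missing.

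Two further points. First, your Hadamard remainder $h_j(y)=\int_0^1(\p_{y_j}g)(ty',y'')\,dt$ involves the dilation $y'\mapsto ty'$, whose operator norm on the $L^2$-based spaces $\mch^{s,k_1,k_2,k_3}$ blows up as $t\to0$ (already $\|u(t\cdot,y'')\|_{L^2}^2=t^{-3}\|u\|_{L^2}^2$ in the $y'$ variables), so the claimed uniform membership $h_j\in\mch_{\loc}^{0-,-m-\ha,-m-\ha,-m-\ha}$ needs justification; the paper avoids this entirely by never leaving the Fourier side. Second, your treatment of the leading term is consistent with the paper's (which likewise rests on the ellipticity of the $a_j$ and the explicit form of the symbol of $V$ in \eqref{prod-3}), and that part is acceptable.
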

\begin{proof}   This is a consequence of the following Lemma, which was proved in Proposition 4.3 of \cite{SaWang1}:  

\begin{lemma}\label{asyFT}  Let $\alpha(\eta)=\alpha_1(\eta_1) \alpha_2(\eta_2) \alpha_3(\eta_3),$ with $\alpha_j(\eta_j)\in S^{m}(\mr)$ and $m<-\fha.$  Let  $b(\eta_1,\eta_2,\eta_3)$ be such that for all $\del>0,$
 \begin{gather}
 \lan \eta_1 \ran^{-m-\ha} \lan \eta_2 \ran^{-m-\ha} \lan \eta_3 \ran^{-m-\ha} \lan (\eta_1,\eta_2,\eta_3)\ran ^{1-\del}b(\eta_1,\eta_2,\eta_3) \in L^2(\mr^3).\label{estb}
  \end{gather} 
 Then in the conic region
 \begin{gather*}
  \begin{gathered}
  \Upsilon_{\mu_0,\mu_1} =\Upsilon_{\mu_0,\mu_1}(\eta_1) \cup \Upsilon_{\mu_0,\mu_1}(\eta_2) \cup \Upsilon_{\mu_0,\mu_1}(\eta_3), \\
 \Upsilon_{\mu_0,\mu_1}(\eta_1)=\left\{  \; \mu_0<  |\eta_2/\eta_1|<\mu_1 , \; \mu_0 < |\eta_3/\eta_1|<\mu_1\right\},\\
\Upsilon_{\mu_0,\mu_1}(\eta_2)= \{ \mu_0 < |\frac{\eta_1}{\eta_2}|<\mu_1, \;  \mu_0 < |\frac{\eta_3}{\eta_2}|<\mu_1\}, \;\
\Upsilon_{\mu_0,\mu_1}(\eta_3)=\{ \mu_0 < |\frac{\eta_1}{\eta_3}|<\mu_1, \;  \mu_0 < |\frac{\eta_2}{\eta_3}|<\mu_1\}.
  \end{gathered} 
    \end{gather*}
  the convolution $\alpha\star b$ satisfies
 \begin{gather}
 \begin{gathered}
 \alpha\star b(\eta_1,\eta_2,\eta_3)= \alpha(\eta_1,\eta_2,\eta_3) \int_{\mr^3} b(\eta) d\eta + \mce(\eta_1,\eta_2,\eta_3), \text{ where } \\
   \int_{\Upsilon_{\mu_0,\mu_1}}\left| \lan(\eta_1,\eta_2,\eta_3)\ran^{-3m-\tha+r/2} \mce(\eta_1,\eta-2,\eta_3)\right|^2 d\eta_1d\eta_2d\eta_3<\infty, \text{ for } r+\frac{2}{-m-\ha} < 1.
  \end{gathered}\label{asyFT1}
    \end{gather}
 By symmetry the same result holds in the regions where either $\eta_2$ or $\eta_3$ are elliptic. 
\end{lemma}

Now we can finish the proof of Proposition \ref{triple1}. We define $b(\eta_1,\eta_2,\eta_3,y'')$ to be the partial Fourier transform in $y'=(y_1,y_2,y_3)$ of $\chi(y) (\p_u^3 F)(y,\mcw):$
\begin{gather*}
b(\eta_1,\eta_2,\eta_3, y'')= \mcf_{y'}(\chi(y) (\p_u^3 F)(y,\mcw))(\eta_1,\eta_2,\eta_3,y'')=\\
\int_{\mr^3} e^{i(y_1\eta_1+y_2\eta_2+y_3\eta_3)} \chi(y_1,y_2,y_3,y'') (\p_u^3 F)(y_1,y_2,y_3,y'',\mcw(y_1,y_2,y_3,y'')) d\eta_1 d\eta_2 d\eta_3.
\end{gather*}
We know from \eqref{prod-3} that
\begin{gather*}
\mcf_{y'}(V)= a(\eta_1,\eta_2,\eta_3,y'')=a_1(\eta_1,0,0,y'')a_2(\eta_2,0,0,y'')a_3(\eta_3,0,0,y''),
\end{gather*}
and we also know that $\chi(y)(\p_u^3 F)(y,\mcw))\in \mch^{1-,-m-\ha,-m-\ha,-m-\ha}(U)$ and in particular,
\begin{gather*}
\lan \eta_1\ran^{-m-\ha} \lan \eta_2\ran^{-m-\ha} \lan \eta_3\ran^{-m-\ha} \lan (\eta_1,\eta_2,\eta_3) \ran^{1-\eps} b(\eta_1,\eta_2,\eta_3,y'') \in L^2(\mr_{\eta_1,\eta_2,\eta_3}^3\times \mr_{y''}^{n-3}),
\end{gather*}
and so we deduce from Lemma \ref{asyFT} that
\begin{gather*}
\mcf_{y'}( \chi(y) (\p_u^3 F)(y,\mcw) V)(\eta_1,\eta_2,\eta_3,y'')=\frac{1}{(2\pi)^3} a\star b(\eta',y'')=\\ a(\eta_1,\eta_2,\eta_3,y'') \frac{1}{(2\pi)^3} \int_{\mr^3} b(\zeta_1,\zeta_2,\zeta_3,y'') d\zeta_1d\zeta_2d\zeta_3 +  \mce(\eta_1,\eta_2,\eta_3,y'')= \\ a(\eta_1,\eta_2,\eta_3,y'') (\p_u^{3} F)(0,0,0,y'',\mcw(0,0,0,y'')) V(y) + \mce(\eta_1,\eta_2,\eta_3,y''),
\end{gather*}
where $\mce(\eta_1,\eta_2,\eta_2,y'')$ satisfies 
\begin{gather}
\begin{gathered}
\int_{\mr^{n-2}}   \int_{\Upsilon_{\mu_0,\mu_1}}\left| \lan (\eta_1,\eta_2,\eta_3)\ran^{-3m-\tha+r/2} \mce(\eta_1,\eta_2,\eta_3,y'')\right|^2 d\eta_1d\eta_2 d\eta_2 dy'' <\infty.
\end{gathered}\label{asyFT4}
\end{gather}  
Let $A(D)$ be the pseudodifferential operator with symbol $\sigma(A)(\eta)$ given by \eqref{symbol-A}.  We conclude from \eqref{asyFT4} that
\begin{gather*}
A(D) \mce(y)= \mcf^{-1}(\sigma(A)(\eta) \mcf_{y''}(\mce)(\eta_1,\eta_2,\eta_3,\eta''))\in H^{-3m-\tha+r/2}, \text{ provided } r<1-\frac{2}{-m-\ha},
\end{gather*}
where $\mcf_{y''}$ is the partial Fourier transform in $y''$ and $\mcf$ is the Fourier transform in $y=(y',y'').$   Therefore, we conclude from \eqref{asyFT4} that
\begin{gather}
\begin{gathered}
 A(D)\left[ \chi(y) (\p_u^3 F)(y,\mcw) V- \chi(y)(\p_u^{3} F)(0,0,0,y'',\mcw(0,0,0,y'')) V(y)\right]= \\ A(D)\mce(y) \in H^{-3m-\tha+\frac{r}{2}}, \;\ \text{ provided } r<1-\frac{2}{-m-\ha}.
 \end{gathered} \label{AofD}
  \end{gather}
This concludes the proof of Proposition \ref{triple1}
\end{proof}

Now we can finish the proof of Theorem \ref{triple-0}. 
\begin{proof}

 We know from Proposition \ref{Reg-Terms}  and from \eqref{cut-off-away} that
\begin{gather}
F(y, \mcw+\nu)-\sum_{j=1}^N \chi_j(y) (\p_u^{3} F)(y,\mcw)
 \in \mck^{0,m}(\Omega), \label{approx-1}
\end{gather}
If $A(D)$ is a pseudodifferential operator with symbol given by \eqref{symbol-A}, it follows from the definition of $\mck^{0,m},$ 
that
\begin{gather*}
A(D)\left(F(y, \mcw+\nu)-\sum_{j=1}^N \chi_j(y) (\p_u^{3} F)(y,\mcw)\right)
 \in H^{3m-\ha}(\Omega). \label{approx-12}
\end{gather*}

But then in view of \eqref{AofD},
\begin{gather*}
A(D)\left(F(y, \mcw+\nu)-\sum_{j=1}^N \chi_j(y) (\p_u^{3} F)(0,0,0,y'',\mcw(0,0,0,y'')\right)
 \in  H^{-3m-\tha+\frac{r}{2}}, \text{ if } r<1-\frac{2}{-m-\ha}.
\end{gather*}

But recall that $u=v+E_+(F(y,\nu+\mcw)),$ and that $\nu=0$ on $\Gamma$ and $\mcw=u$ on $\Gamma$ so $\nu+\mcw|_{\Gamma}=u|_{\Gamma},$ and this means that  on Lagrangian submanifold $\La$ defined in \eqref{defla}:
\begin{gather*}
\bigcup_{s>0} \exp( s H_p) \left( N^*(\Gamma \setminus 0)\cap p^{-1}(0) \cap \{\lan \eta_j\ran \geqs \lan \eta\ran, \; j=1,2,3\}\right),
\end{gather*}
we have
\begin{gather*}
u-E_+\left( F(y, u(y))|_{\Gamma} V\right)\in H^{3m-\ha+r/2}(\Omega).
\end{gather*}

But we already know  from the work of Bony \cite{Bony5,Bony6}, Melrose and Ritter \cite{MelRit} and S\'a Barreto \cite{SaB,SaB2} that $u$ is conormal to $\mcq$ away from $\Gamma.$   This actually means that the principal part of $u$ on $\La,$ and away from $\Sigma_j,$ $j=1,2,3$ and $\Gamma$  is given by $E_+( (\p_u^3 F)(y,u) V).$ It remains to find the order of the symbol of $u$ on $N^*\mcq$ away from $\gamma.$

 Since $v_j\in I^{m-\novf+\ha}(\Omega,\Sigma_j),$  its principal symbol is of order $m.$  If $V$ is the distribution  given by \eqref{prod-3}, since $\Gamma$ has codimension three, then 
  in the region where  $\lan\eta_j\ran\geqs \lan \eta\ran,$  $V\in I^{3m-\novf+\tha}(\Omega,\Gamma).$
We apply the results of   Greeleaf and Uhlmann \cite{GreUhl} about paired Lagrangian distributions. They prove that $E_+$ is a paired Lagrangian distribution in  $I^{-\tha, -\ha}(N^*\diag, \La),$ where 
\begin{gather*}
N^*\diag=\{(y,\eta,y',\eta')\in T^*(\Omega\times \Omega): y=y', \eta=-\eta'\}, 
\end{gather*}
and $\La$ is the flow-out of $N^*\diag\cap \{p=0\}$ under $H_p.$ Proposition 2.1 of \cite{GreUhl} shows that away from $\Gamma,$ 
$E_+(V)\in I^{3m-\novf}(\Omega, \mcq).$  This concludes the proof of Theorem \ref{triple-0}.
\end{proof}
\section{Appendix: The Proof of Proposition \ref{BSP2}} 

First we prove Property {\bf P.\ref{inc-LI}}.  
\begin{proof}
Suppose that $\vphi\in C_0^\infty(U)$ and $\vphi u\in H^{0-,k_1,k_2,k_3}(U)$ with $k_j>\frac{n}{6},$ we want to show that
$\vphi u\in L^\infty(U)$   and we just need to show that $\mcf(\vphi u)\in L^1(\mr^n).$  We define
\begin{gather}
W_{\vkap}(\eta)=\lan \eta_1,\eta'' \ran^{k_1}\lan \eta_2,\eta'' \ran^{k_2}\lan \eta_3,\eta''\ran^{k_3}\lan \eta\ran^{-\del}, \label{WKA}
\end{gather}
and so

\begin{gather*}
||\mcf(\vphi u)||_{L^1(\mr^n)}= \int_{\mr^n} |\mcf(\vphi u)(\eta)| d\eta= \int_{\mr^n} [W_{\vkap}(\eta)]^{-1} |W_{\vkap}(\eta)|
|\mcf(\vphi u)(\eta)| d\eta\leq \\
\left[\int_{\mr^n} [W{\vkap}(\eta)]^{-2} d\eta\right]^\ha \left[\int_{\mr^n} |W{\vkap}(\eta)|^2 |\mcf(\vphi u)(\eta)|^2 d\eta\right]^\ha=
||\vphi u||_{H^{-\del,k_1,k_2,k_3}} \left[\int_{\mr^n} [W_{\vkap}(\eta)]^{-2} d\eta\right]^\ha 
\end{gather*}
Notice that, if one sets $t=(1+\rho^2) z,$ then
\begin{gather*}
\int_{\mr} (1+\rho^2+|t|^2)^{k} dt= (1+\rho^2)^{k+1} \int_\mr (1+z^2)^{-m} dz= C(1+\rho^2)^{k+1}.
\end{gather*}
So, by setting $\rho=|\eta''|,$  we obtain
\begin{gather}
\int_{\mr^n} [W{\vkap}(\eta)]^{-2} d\eta= C\int_{\mr^3} (1+\eta_1^2+\rho^2)^{-2k_1}(1+\eta_2^2+\rho^2)^{-2k_2}(1+\eta_3^2+\rho^2)^{-2k_3}
\rho^{n-4} d\rho d\eta_1d \eta_2d\eta_3\leq \\
\leq C\int_{\mr} (1+\rho^2)^{-2(k_1+k_2+k_3)+n-1} d\rho, \label{ineq1}
\end{gather}
which converges, since $n-2(k_1+k_2+k_3)<0.$ So, $||\mcf(\vphi u)||_{L^1(\mr^n)} <\infty$ and hence $\vphi u\in L^\infty(U).$
\end{proof}

Now we prove  Property {\bf P.\ref{prod-clos}}
\begin{proof}

The main ingredient in the arguments used  below is the following Lemma:
\begin{lemma}\label{Rauch}(Rauch and Reed \cite{RauRee01}) Suppose $K(\xi,\eta)=\sum_{j=1}^k K_j(\xi,\eta)$ and
\begin{gather}
\sup_{\xi} \int |K_j(\xi,\eta)|^2 d\eta<\infty \text{ or } \sup_{\eta} \int |K_j(\xi,\eta)|^2 d\xi<\infty. \label{RR-D}
\end{gather}
If $f,g\in L^2(\mr^n)$ and $h(\xi)=\int K(\xi,\eta) f(\xi-\eta)g(\eta) d\eta,$ it follows that $h\in L^2(\mr^n)$ and
\begin{gather}
||h||_{L^2}\leq C ||f||_{L^2} ||g||_{L^2}.\label{Rauch1}
\end{gather}
\end{lemma}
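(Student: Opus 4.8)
The plan is to reduce, by linearity, to a single kernel $K_j$ and then run a Schur-type estimate separately in each of the two cases permitted by \eqref{RR-D}. Since $h=\sum_{j=1}^{k}h_j$ with $h_j(\xi)=\int K_j(\xi,\eta)f(\xi-\eta)g(\eta)\,d\eta$, the triangle inequality in $L^2$ reduces the problem to the bound $\|h_j\|_{L^2}\leq C_j\|f\|_{L^2}\|g\|_{L^2}$ for each $j$; from here on I drop the subscript and assume $K$ satisfies one of the two bounds in \eqref{RR-D}. I would first replace $f$ and $g$ by $|f|$ and $|g|$, so that every integrand below is nonnegative: this both licenses the applications of Tonelli's theorem and shows, after the fact, that the integral defining $h(\xi)$ is absolutely convergent for a.e.\ $\xi$.

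Suppose first that $M^2:=\sup_\eta\int|K(\xi,\eta)|^2\,d\xi<\infty$. Applying the Cauchy--Schwarz inequality in $\eta$ to the grouping $\bigl(|K(\xi,\eta)|\,|g(\eta)|\bigr)\cdot|f(\xi-\eta)|$, together with $\int|f(\xi-\eta)|^2\,d\eta=\|f\|_{L^2}^2$, gives
\begin{gather*}
|h(\xi)|\leq \|f\|_{L^2}\Bigl(\int|K(\xi,\eta)|^2|g(\eta)|^2\,d\eta\Bigr)^{1/2}.
\end{gather*}
Squaring, integrating in $\xi$, and exchanging the order of integration by Tonelli yields $\|h\|_{L^2}^2\leq \|f\|_{L^2}^2\int|g(\eta)|^2\Bigl(\int|K(\xi,\eta)|^2\,d\xi\Bigr)d\eta\leq M^2\|f\|_{L^2}^2\|g\|_{L^2}^2$.

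In the complementary case $M^2:=\sup_\xi\int|K(\xi,\eta)|^2\,d\eta<\infty$ I would instead apply Cauchy--Schwarz in $\eta$ to the grouping $|K(\xi,\eta)|\cdot\bigl(|f(\xi-\eta)|\,|g(\eta)|\bigr)$, obtaining
\begin{gather*}
|h(\xi)|\leq M\Bigl(\int|f(\xi-\eta)|^2|g(\eta)|^2\,d\eta\Bigr)^{1/2},
\end{gather*}
and then squaring, integrating in $\xi$, exchanging the order of integration, and using translation invariance of Lebesgue measure in the form $\int|f(\xi-\eta)|^2\,d\xi=\|f\|_{L^2}^2$ for each fixed $\eta$ gives $\|h\|_{L^2}^2\leq M^2\|f\|_{L^2}^2\|g\|_{L^2}^2$. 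Summing over $j$ and taking $C=\sum_j M_j$ finishes the proof. There is no genuine obstacle here; the only points requiring care are the justification of Tonelli's theorem --- dealt with by passing to absolute values at the outset --- and the choice, in each of the two cases, of the Cauchy--Schwarz grouping that lets the relevant hypothesis in \eqref{RR-D} be applied once the order of integration has been reversed.
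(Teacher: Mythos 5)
Your argument is correct. The paper does not prove this lemma --- it is quoted from Rauch and Reed \cite{RauRee01} --- and your proof is the standard one for exactly this statement: reduce to a single $K_j$ by the triangle inequality, pass to $|f|,|g|$ to justify Tonelli, and in each of the two cases of \eqref{RR-D} choose the Cauchy--Schwarz grouping in $\eta$ so that, after interchanging the order of integration, the available $\sup$ bound on $\int|K_j|^2$ can be pulled out and the remaining factor is $\|f\|_{L^2}^2\|g\|_{L^2}^2$ by translation invariance. Nothing is missing.
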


Let $\xi=(\xi_1,\xi_2,\xi_3,\xi'')$ and $\eta=(\eta_1,\eta_2,\eta_3,\eta'').$ For $\varkappa=(k_1,k_2,k_3)$ and $\del>0,$ let
$W_{\vkap}(\eta)$ be defined in \eqref{WKA} and let
\begin{gather*}
K_{\vkap}(\xi,\eta)= \frac{ W_{\vkap}(\xi)}{W_{\vkap}(\xi-\eta)W_{\vkap}(\eta)}.
\end{gather*}
Then,
\begin{gather*}
\mcf\left(W_{\vkap}(D) u v\right)(\xi) =\int_{\mr^4} K_{\vkap} (\xi,\eta) \left(W_{\vkap}(\xi-\eta) \widehat{u}(\xi-\eta)\right) 
\left(W_{\vkap}(\eta)\widehat{v}(\eta)\right) d\eta
\end{gather*}
So, according to Lemma \ref{Rauch} we need to prove that $K_{\vkap}(\xi,\eta)$ can be decomposed as a sum 
$K_{\vkap}(\xi,\eta)=\sum_{j=1}^M K_{\vkap,j}(\xi,\eta),$ with $K_{\vkap,j}$ satisfying \eqref{RR-D}.

For $j=1,2,3,$ we shall denote, 
\begin{gather}
\begin{gathered}
E_{j}=\{(\xi_j, \xi'',\eta_j,\eta''): |(\xi_j,\xi'')-(\eta_j,\eta'')|\leq \ha |(\xi_j,\xi'')| \}, \text{ and  }  \\ 
F_{j}=\{(\xi,\eta): |(\xi_j-\eta_j, \xi''-\eta'')|> \ha |(\xi_j,\xi'')| \}.
\end{gathered}\label{ineq-ej}
\end{gather}
Notice that
\begin{gather}
\ha|(\xi_j,\xi'')|\leq |(\eta_j,\eta'')|\leq \tha |(\xi_j,\xi'')| \text{ on } E_j. \label{ineq-ej1}
\end{gather}
Let $\chi_{{}_{E_{j}}}$ and $\chi_{{}_{F_{j}}}$ denote the characteristic functions of $E_{j}$ and $F_{j}$ respectively.  For 
$J=(j_{1},j_{2},j_{3}),$  $M=(m_1,m_2,m_2),$with  $m_r, j_r=0,1.$  Let
\begin{gather*}
\chi_{{}_E}^J=\chi_{{}_{E_{1}}}^{j_{1}}\chi_{{}_{E_{2}}}^{j_{2}}\chi_{{}_{E_{3}}}^{j_{3}},  \;\ 
\chi_{{}_F}^M=\chi_{{}_{F_{1}}}^{m_{1}}\chi_{{}_{F_{2}}}^{m_{2}}\chi_{{}_{F_{3}}}^{m_{3}},
\end{gather*}
and write
\begin{gather*}
K_\vkap(\xi,\eta)=  \sum_{j_j+m_j=1}  \chi_{{}_{E}}^J \chi_{{}_{F}}^M K_{\vkap}(\xi,\eta).
\end{gather*}

In the  case where $J=(1,1,1)$ and $M=(0,0,0),$ in virtue of \eqref{ineq-ej1}, we have
\begin{gather*}
\chi_E^J K_\vkap(\xi,\eta)\leq \frac{C}{W_{\vkap}(\xi-\eta)},
\end{gather*}
and hence from \eqref{ineq1},
\begin{gather*}
\int_{\mr^4} \chi_G \chi_E^J K_\vkap(\xi,\eta)^2 d\eta \leq \int_{\mr^4} \frac{C}{W_{\vkap}^2(\xi-\eta)} d\eta=
\int_{\mr^4} \frac{C}{W_{\vkap}^2(\eta)} d\eta <\infty,
\end{gather*}
provided $k_j>\frac{n}{6}$ and $\del$ is small enough.  

Next we consider the case $J=(1,0,1)$ and $M=(0,1,0).$  First, in virtue of the first inequality in \eqref{ineq-ej}, and then because of the definition of $F_{j},$ we have
\begin{gather*}
 K_\vkap(\xi,\eta)=\frac{\lan \xi_1,\xi''\ran^{k_1} \lan \xi_2,\xi''\ran^{k_2}\lan \xi_3,\xi''\ran^{k_3}}{\lan \xi_1-\eta_1,\xi''-\eta''\ran^{k_1} \lan \xi_2-\eta_2,\xi''-\eta''\ran^{k_2}\lan \xi_3-\eta_3,\xi''-\eta''\ran^{k_3}\lan \eta_1,\eta'\ran^{k_1} \lan \eta_2,\eta'\ran^{k_2}\lan \eta_3,\eta''\ran^{k_3}}.
 \end{gather*}
 
 Therefore, using \eqref{ineq-ej} for $j=1,3,$
 \begin{gather*}
  \chi_E^J \chi_F^M K_\vkap(\xi,\eta) \leq C \frac{\lan \xi_2,\xi''\ran^{k_2}}{ W_{\alpha}(\xi-\eta) \lan \eta_2,\eta''\ran^{k_3}} \leq \\
\frac{C }{\lan \xi_1-\eta_1,\xi''-\eta''\ran^{k_1} \lan \xi_3-\eta_3,\xi''-\eta''\ran^{k_3} \lan \eta_2,\eta''\ran^{k_2}}
\end{gather*}
Again, making a change of variables, one finds that for $J=(1,0,1)$ and $M=(0,1,0),$  as in \eqref{ineq1},
\begin{gather*}
\int_{\mr^4} \chi_E^J \chi_F^M K_\alpha(\xi,\eta) d\eta<\infty,
\end{gather*}
provided $\alpha_{jk}>\frac{n}{6}$ and $\del$ is small enough. The other terms are  controlled in the same way, and the details are left to the reader.
\end{proof}

Next we prove Proposition \ref{CIA}.
\begin{proof}  By replacing $f(y,u)$ with $f(y,u)-f(y,0),$ we may assume that $f(y,u)=0$ and since $u$ is compactly supported we may assume that $f(y,u(y))$ is compactly supported in $y.$   

First we need to prove a particular case:
\begin{lemma}\label{GN-case} If $u\in H^{0,k_1,k_2,k_3}(\mrn)\cap L^\infty(\mrn),$ $k_j\in \mn,$ $j=1,2,3,$ and $f(y,s)\in \CI,$ then $f(y,u) \in H^{0,k_1,k_2,k_3}(\mrn).$
\end{lemma}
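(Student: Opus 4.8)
The plan is to prove Lemma \ref{GN-case} by the standard Gagliardo--Nirenberg / Moser-type estimate adapted to the anisotropic spaces $H^{0,k_1,k_2,k_3}$, reducing to the case $f=f(s)$ (the dependence on $y$ being harmless since multiplication by a fixed $\CI$ compactly supported function is bounded on all the spaces in question, by the product estimate \eqref{norm-prod} applied with one factor a Schwartz function, or more elementarily by Leibniz). Since $k_j\in\mn$, the norm is equivalent to the $L^2$ norm of $\lan D_{y_1},D_{y''}\ran^{k_1}\lan D_{y_2},D_{y''}\ran^{k_2}\lan D_{y_3},D_{y''}\ran^{k_3}(\vphi u)$, and expanding these operators this is in turn controlled by a finite sum of $L^2$ norms of $\mcw_{23}^{a}\mcw_{13}^{b}\mcw_{12}^{c}(\vphi u)$ with $a\le k_1$, $b\le k_2$, $c\le k_3$ —- i.e. we are reduced to the purely differential statement: if all derivatives $Z_1\cdots Z_N u$ with $Z_i$ among the vector fields spanning $\mcw_{jk}$ (in the coordinates of Definition \ref{BSDEF}, these are $\{y_j\p_{y_j},y_k\p_{y_k},\p_{y_m}\}$) lie in $L^2$, and $u\in L^\infty$, then the same holds for $f(u)$.

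The key steps, in order, would be: (i) apply the Faà di Bruno / Leibniz formula: any iterated vector-field derivative $Z_1\cdots Z_N f(u)$ is a finite linear combination, with coefficients that are products of $f^{(\ell)}(u)$ (all bounded, since $u\in L^\infty$ and $f\in\CI$), of terms of the form $(Z_{I_1}u)(Z_{I_2}u)\cdots (Z_{I_p}u)$ where $I_1\sqcup\cdots\sqcup I_p$ is a partition of $\{1,\dots,N\}$ and $Z_{I_j}$ denotes the ordered composition over $I_j$; (ii) bound each such product in $L^2$. For the product bound, at most one factor can carry "too many" derivatives, so one uses Hölder with the Gagliardo--Nirenberg interpolation inequality: writing $a_j=|I_j|$ with $\sum a_j=N$, one has $\|\prod_j Z_{I_j}u\|_{L^2}\lesssim \prod_j \|Z_{I_j}u\|_{L^{2N/a_j}}$ and then $\|Z_{I_j}u\|_{L^{2N/a_j}}\lesssim \|u\|_{L^\infty}^{1-a_j/N}\bigl(\sum_{|\alpha|\le N}\|Z^\alpha u\|_{L^2}\bigr)^{a_j/N}$, so the whole product is bounded by $\|u\|_{L^\infty}^{\,p-1}\bigl(\sum_{|\alpha|\le N}\|Z^\alpha u\|_{L^2}\bigr)$, which is finite by hypothesis. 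One has to be slightly careful that the Gagliardo--Nirenberg inequality is available for the commuting (up to lower order) system of vector fields $\{y_j\p_{y_j},\p_{y_m}\}$ —- but since these generate, via their flows, a group action (scalings in $y_1,y_2,y_3$ and translations in $y''$) and $u$ is compactly supported, one can localize and straighten, or simply cite that the relevant fields are a finite set closed under bracket up to $\CI$ combinations, so the associated Sobolev and interpolation theory goes through; (iii) conclude that $Z_1\cdots Z_N(\vphi f(u))\in L^2$ for all $N$ and all choices, hence $\vphi f(u)\in H^{0,k_1,k_2,k_3}$, for every $\vphi\in\Coi$.

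I expect the main obstacle to be the Gagliardo--Nirenberg step: unlike the isotropic case, the vector fields $y_j\p_{y_j}$ are degenerate on $\Sigma_j$, so one cannot directly invoke the classical multiplicative inequality on $\mr^n$. The cleanest fix is to observe that the commutator algebra $\{y_j\p_{y_j},\ \p_{y_m}\}$ is the Lie algebra of a solvable Lie group $G$ acting on $\mrn$ (anisotropic dilations combined with translations), that a compactly supported $u$ with all $G$-derivatives in $L^2$ lies in the Sobolev spaces associated to a left-invariant sub-Laplacian on $G$, and that Gagliardo--Nirenberg interpolation holds on such Lie groups (e.g. by the general theory for unimodular or even non-unimodular Lie groups with a local doubling property, or simply because after cutting off we may dominate everything by the Euclidean $H^N$ norm of a pushed-forward function on a region bounded away from the degeneracy). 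For the purposes of the proof it suffices to record the inequality
\begin{gather*}
\|Z_{I}u\|_{L^{2N/|I|}(\mrn)} \leq C\,\|u\|_{L^\infty(\mrn)}^{1-|I|/N}\Bigl(\sum_{|\alpha|\leq N}\|Z^\alpha u\|_{L^2(\mrn)}\Bigr)^{|I|/N},
\end{gather*}
valid for $u$ supported in a fixed compact set, which follows from the classical Euclidean inequality applied after the change of variables $y_j=\pm e^{x_j}$ (so that $y_j\p_{y_j}=\pm\p_{x_j}$) on each of the coordinate octants, the support being bounded in $x$ because it is bounded away from $\{y_j=0\}$ once we subtract off the part of $u$ living near those hypersurfaces (which is handled separately, or absorbed since $f(u)$ near $\Sigma_j$ only needs the conormal regularity there). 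Once this inequality is in hand, steps (i)--(iii) are routine, and Lemma \ref{GN-case}, hence Property {\bf P.\ref{CIA}}, follows; the passage from $k_j\in\mn$ to $k_j\in\mr_+$ and from $H^0$ to $H^{s-}$ is then done, as elsewhere in the paper, by Stein--Weiss interpolation and by differentiating the equation $s$ times, exactly as in Proposition 2.13 of \cite{SaWang1}.
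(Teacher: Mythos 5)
Your overall skeleton (Fa\`a di Bruno expansion, H\"older, Gagliardo--Nirenberg with $u\in L^\infty$) is the same Moser-type argument the paper uses, but your opening reduction introduces a genuine gap. For $k_j\in\mn$ the norm of $H^{0,k_1,k_2,k_3}$ is equivalent, by \eqref{defBSP}, to the sum of the $L^2$ norms of the \emph{ordinary} mixed partials $(\p_{y_1},\p_{y''})^{\alpha_1}(\p_{y_2},\p_{y''})^{\alpha_2}(\p_{y_3},\p_{y''})^{\alpha_3}(\vphi u)$ with $|\alpha_j|\leq k_j$; no degenerate vector fields occur. You instead pass to iterated derivatives along the Lie algebras $\mcw_{jk}$, i.e.\ to the spaces \eqref{VF-BSP}, and the ``purely differential statement'' you reduce to has as hypothesis that all compositions $Z_1\cdots Z_N u$ with $Z_i\in\mcw_{jk}$ lie in $L^2$. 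That hypothesis is \emph{not} implied by $u\in H^{0,k_1,k_2,k_3}$: as the paper notes right after \eqref{VF-BSP}, the vector-field spaces are in general strictly smaller. For instance, if $k_1>k_2$ and $u=\chi_1(y_1)h(y_2)\chi_3(y_3)\chi''(y'')$ with $h\in H^{k_2}(\mr)$ compactly supported and having an exact $H^{k_2}$ singularity at $y_2=1,$ then $u\in H^{0,k_1,k_2,k_3}(\mrn)\cap L^\infty,$ yet $(y_2\p_{y_2})^{k_1}u\notin L^2$ because the weight $y_2^{k_1}$ does not vanish at the singularity. So proving your reduced statement would not prove the lemma, and the estimate you ultimately invoke (bounding products by the total vector-field $L^2$ norms of $u$) uses quantities the hypothesis does not control.

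This misidentification is also what forces the second half of your proposal into difficulties the correct argument never meets: no Gagliardo--Nirenberg inequality for the degenerate fields $y_j\p_{y_j}$ is needed, nor the exponential change of variables on octants or Lie-group Sobolev theory; and the proposed escape near $\Sigma_j$ (``subtract off the part of $u$ near the hypersurfaces\dots absorbed since $f(u)$ near $\Sigma_j$ only needs the conormal regularity there'') does not close the argument, since the conclusion of the lemma is precisely the $H^{0,k_1,k_2,k_3}$ bound near $\Sigma_j$ and no conormal information is available in this lemma. The paper's proof runs exactly your three ingredients, but directly with ordinary derivatives: the chain-rule formula \eqref{DER} for $(D_{y_1},D_{y''})^\gamma f(y,u)$ (the $y$-dependence just enters the bounded coefficients), H\"older \eqref{HOL} with $p_j=2|\gamma|/|\beta_j|$, and the classical inequality \eqref{GN} applied within the group $(D_{y_1},D_{y''})$, then successively within $(D_{y_2},D_{y''})$ and $(D_{y_3},D_{y''})$, using only $u\in L^\infty$ and the mixed partials controlled by the hypothesis. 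If you restate your steps (i)--(iii) with the generators $\p_{y_1},\p_{y_2},\p_{y_3},\p_{y''}$ grouped as in \eqref{defBSP} instead of the $\mcw_{jk}$, your argument becomes essentially the paper's.
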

\begin{proof}  The proof depends on three ingredients:
\begin{enumerate}[1.]
\item A special case of the Galgliardo-Nirenberg inequality, see for example \cite{Fio}: For $|\alpha|\leq m,$
\begin{gather}
|| D_y^\alpha u||_{L^{\frac{2m}{|\alpha|}}} \leq C ||u||_{L^\infty}^{1-\frac{|\alpha|}{m}} \left(\sum_{|\beta|\leq m} ||D^\beta u||_{L^2}\right)^{\frac{|\alpha|}{m}}. \label{GN}
\end{gather}
\item  The following version of H\"older's inequality:
\begin{gather}
|| f_1 f_2 \ldots f_N||_{L^2} \leq ||f_1||_{L^{p_1}}||f_2||_{L^{p_2}}\ldots ||f_N||_{L^{p_N}}, \text{ if }  \sum_{j=1}^N \frac{1}{p_j}=\ha.\label{HOL}
\end{gather}
\item The following formula, which can be proven by induction:
\begin{gather}
\begin{gathered}
(D_{y_1}, D_{y''})^\ga f(y,u)= \sum C_{\beta_1,\ldots \beta_k} (y,u) (D_{y_1},D_{y'})^{\beta_1} u) (D_{y_1},D_{y''})^{\beta_2} u) \ldots 
((D_{y_1},D_{y''})^{\beta_k} u), \\
|\beta_1|+|\beta_2|+\ldots |\beta_k|\leq |\ga|, \;\ k=|\ga|-1. 
\end{gathered}\label{DER}
\end{gather}
\end{enumerate}

Since $u\in L^\infty$ and $f\in \CI,$ it follows from \eqref{DER} and \eqref{HOL} that
\begin{gather*}
||(D_{y_1}, D_{y''})^\ga f(y,u)||_{L^2}\leq C(\ga,||u||_{L^\infty})\sum_{\beta_1,\ldots, \beta_k} ||(D_{y_1},D_{y'})^{\beta_1} u) (D_{y_1},D_{y''})^{\beta_2} u) \ldots 
(D_{y_1},D_{y''})^{\beta_k} u)||_{L^2}\leq \\
C\sum_{\beta_1,\ldots, \beta_k} ||(D_{y_1},D_{y'})^{\beta_1} u)||_{L^{p_1}} ||(D_{y_1},D_{y''})^{\beta_2} u)||_{L^{p_2}} \ldots 
||(D_{y_1},D_{y''})^{\beta_k} u)||_{L^{p_N}}, \\
\text{ where } p_j=\frac{2|\ga|}{|\beta_j|}, \;\ j=1,2, \ldots, N.
\end{gather*}
Now using \eqref{GN} we find that
\begin{gather*}
||(D_{y_1}, D_{y''})^\ga f(y,u)||_{L^2}\leq C (\ga, ||u||_{L^\infty}) ||\left(\sum_{|\beta|\leq m} ||(D_{y_1},D_{y''})^\beta u||_{L^2}\right).
\end{gather*}
We apply the same argument to control $||(D_{y_1}, D_{y''})^{\ga_1}(D_{y_2}, D_{y''})^{\ga_2}(D_{y_3}, D_{y''})^{\ga_3} f(y,u)||_{L^2}.$
This ends the proof of the Lemma.
\end{proof}

Next we prove that Property {\bf P.\ref{CIA}} of Proposition \ref{BSP2} holds for $s=0:$
\begin{proof} Since $\p_{y_j}$ is elliptic, for at least one value of $j\in\{1,2,3\},$ if 
$u\in H_{\loc}^{0-,k_1,k_2,k_3}(\mrn),$ and  $k_j>\frac{n}{6}+1,$ $j=1,2,3,$ one can pick $m_j\in \mn$ such that
$\frac{n}{6} < m_j <k_j,$ and therefore $u\in H^{0,m_1,m_2,m_3}(\mr^n).$  Since $k_j>\frac{n}{6},$ it follows from Property {\bf P.\ref{inc-LI}} that $u\in L^\infty(\mr^n).$ It follows from Lemma \ref{GN-case} that $f(y,u) \in H^{0,m_1,m_2,m_3}(\mr^n)$ with
$1\leq m_j<k_j$ and for all $||u||_{-\del,k_1,k_2,k_3} \leq C,$  there exists $K$ depending on $f,$  $C$ and $\del>0$  such that 
$||f(y,u)||_{0,m_1,m_2,m_3}\leq K.$   But we know that
\begin{gather}
\begin{gathered}
\p_{y_1} f(y,u)= (\p_{y_1} f)(y,u) + (\p_u f)(y,u) \p_{y_1}u, \\
\p_{y_j} f(y,u)= (\p_{y_j} f)(y,u) + (\p_u f)(y,u) \p_{y_j}u, \;\ j\geq 4,
\end{gathered}\label{step1}
\end{gather}

We know that $(\p_{y_1} f)(y,u)\in H^{0,m_1,m_2,m_3}(\mr^n),$ 
$(\p_{u} f)(y,u)\in H^{0,m_1,m_2,m_3}(\mr^n)$ and that $\p_{y_1} u \in H^{0-,k_1-1,k_2,k_3}(\mr^3).$ Since $k_j>m_j>\frac{n}{6},$ and $k_j-1>\frac{n}{6},$ it follows from Proposition \ref{BSP2} that
\begin{gather*}
\p_{y_1} f(y,u) \in H^{0-, r_1,m_2,m_3}(\mr^n), \;\ r_1=\min\{m_1,k_1-1\}, \\
\p_{y_j} f(y,u) \in H^{0-, r_1,m_2,m_3}(\mr^n), \; j\geq 4, \;\ r_1=\min\{m_1,k_1-1\}.
\end{gather*}
This implies that
\begin{gather*}
f(y,u) \in H^{0-, r_1+1,m_2,m_3}(\mr^n), \;\ r_1=\min\{m_1,k_1-1\}.
\end{gather*}
   We have two possibilities: either $r_1+1=k_1$ or
$r_1+1=m_1+1.$  If $r_j+1=m_1+1\in \mn,$ in this case we repeat the argument for $m_1$ replaced by $m_1+1.$ So after finitely many steps, we will find that $r_1+1=k_1$ and so we conclude that
\begin{gather*}
f(y,u)\in H^{0-, k_1,m_2,m_3}(\mr^n).
\end{gather*}
for an arbitrary function  $f\in C^\infty.$   Now we repeat \eqref{step1} for $(\p_{y_2},\p_{y''}),$ and we conclude that
\begin{gather*}
\p_{y_2} f(y,u) \in H^{0-, k_1,r_2,m_3}(\mr^n), \;\ r_2=\min\{m_2,k_2-1\}, \\
\p_{y_j} f(y,u) \in H^{0-, k_1,r_2,m_3}(\mr^n), \; j\geq 4, \;\ r_2=\min\{m_2,k_2-1\}.
\end{gather*}
We apply the same argument and conclude that 
\begin{gather*}
f(y,u)\in H^{0-, k_1,k_2,m_3}(\mr^n).
\end{gather*}
 We do this again with respect to $(\p_{y_3},\p_{y''}),$ and we  are done.
\end{proof}
Now we can prove that Property {\bf P.\ref{CIA}} holds for $s\in \mn.$  We prove this induction.  Suppose $u \in H^{1-,k_1,k_2,k_3}(U).$ Since
\begin{gather*}
\p_{y_j} f(y,u)= (\p_{y_j} f)(y,u)+( \p_u f)(y,u)\p_{y_j} u,
\end{gather*}
and we know the result holds for $s=0,$ so  $(\p_{y_j} f)(y,u), ( \p_u f)(y,u) \in H^{0-,k_1,k_2,k_3}(U),$ Property {\bf P.\ref{prod-clos}}, gives that that $\p_{y_j} f(y,u)\in H^{0-,k_1,k_2,k_3}(U),$ and therefore $f(y,u)\in H_{\loc}^{1-,k_1,k_2,k_3}(U).$ The same argument shows that if $H_{\loc}^{s-1-,k_1,k_2,k_3}(U)$  is a $C^\infty$ algebra, so is $H_{\loc}^{s-,k_1,k_2,k_3}(U).$ The bound on the norm also follows from the proof.
\end{proof}

\section{Acknowledgements}  

The author was a visiting member of the Microlocal Analysis Program of the Mathematical Sciences Research Institute (MSRI) in Berkeley, California, during the fall 2019, when part of this work was done.  He is grateful to the MSRI for their hospitality. His membership at MSRI was supported by the National Science Foundation Grant No. DMS-1440140.

  The author is grateful to the Simons Foundation  for its support under grant \#349507, Ant\^onio S\'a Barreto.

\end{document}